\documentclass[12pt,reqno]{amsart}
\usepackage{amsmath,amsfonts,amsthm,amssymb,color,mathrsfs}

\usepackage{hyperref}
\usepackage[T1]{fontenc}
%\usepackage{mathrsfs}
%\usepackage{pdfsync}
%\usepackage{showkeys}
%\textheight 205 true mm \textwidth  150 true mm \oddsidemargin
%2.5true mm \evensidemargin 2.5 true mm

%Somewhat wider and taller page than in art12.sty

\topmargin -0.4in  \headsep 0.4in  \textheight 8.8in
  \oddsidemargin 0.02in  \evensidemargin 0.15in
\textwidth 6.3in

%
%Samy's Macro
%

\newcommand{\ou}{[0,1]}
\definecolor{dg}{rgb}{0, 0.5, 0}

\newcommand{\bd}{\mathbf{D}}
\newcommand{\bj}{\mathbf{J}}

\newcommand{\id}{\mbox{Id}}

%%%%%%%%%%%%%%%%%%%%%%%%%%%%%%%%%%%%%%%%
%%%%%%%%%%%%%%% Brackets %%%%%%%%%%%%%%%%%%%%
%%%%%%%%%%%%%%%%%%%%%%%%%%%%%%%%%%%%%%%%
\newcommand{\lp}{\left(}
\newcommand{\rp}{\right)}
\newcommand{\lc}{\left[}
\newcommand{\rc}{\right]}
\newcommand{\lcl}{\left\{}
\newcommand{\rcl}{\right\}}
\newcommand{\lln}{\left|}
\newcommand{\rrn}{\right|}
\newcommand{\lla}{\left\langle}
\newcommand{\rra}{\right\rangle}

\newcommand{\eps}{\varepsilon}

%%%%%%%%%%%%%%%%%%%%%%%%%%%%%%%%%%%%%%%%%
%%%%%%%%%%%%%%% Greek %%%%%%%%%%%%%%%%%%%%%%%
%%%%%%%%%%%%%%%%%%%%%%%%%%%%%%%%%%%%%%%%%%
%\newcommand{\al}{\alpha}

\newcommand{\ga}{\gamma}

\newcommand{\la}{\lambda}

\newcommand{\oom}{\Omega}

%
% equations
%
%\def\theequation{\thesection.\arabic{equation}}
\newcommand{\beq}{\begin{equation}}
\newcommand{\eeq}{\end{equation}}
\newcommand{\bea}{\begin{eqnarray}}
\newcommand{\eea}{\end{eqnarray}}
\newcommand{\beas}{\begin{eqnarray*}}
\newcommand{\eeas}{\end{eqnarray*}}

%%%%%%%%%%%%%%%%%%%%%%%

\def\msh{{\mathscr H}}

\def\cC{{\mathcal C}}

\def\me{{\mathbb  E}}

\def\md{{\mathbb D}}
\def\mr{{\mathbb  R}}

\def\mp{{\mathbb  P}}

%%%%%%%%%%%%%%%%%%%%%%%

\newcommand{\cac}{{\mathcal C}}

\newcommand{\cf}{{\mathcal F}}

\newcommand{\ch}{{\mathcal H}}

\newcommand{\cm}{{\mathcal M}}

\newcommand{\crr}{{\mathcal R}}

\newcommand{\D}{{\mathbb D}}
\newcommand{\EE}{{\mathbb E}}

\newcommand{\PP}{{\mathbb P}}

\newcommand{\R}{{\mathbb R}}

%%%%%%%%%%%%%%%%%%%%%%%
%
% theorem/proposition/etc.
%
\newtheorem{theorem}{Theorem}[section]

\newtheorem{corollary}[theorem]{Corollary}

\newtheorem{definition}[theorem]{Definition}

\newtheorem{hypothesis}[theorem]{Hypothesis}
\newtheorem{lemma}[theorem]{Lemma}

\newtheorem{proposition}[theorem]{Proposition}
\theoremstyle{remark}
\newtheorem{remark}[theorem]{Remark}
\theoremstyle{remark}
\newtheorem{example}[theorem]{Example}
\theoremstyle{remark}

\newtheorem{foo}[theorem]{Remarks}

%
% proof environment
%
%\newenvironment{proof}{\addvspace{\medskipamount}\par\noindent{\it
%Proof}.}
%{\unskip\nobreak\hfill$\Box$\par\addvspace{\medskipamount}}

  % angular brackets for projection

    % round brackets
   % curly brackets
     % edgy brackets

%\newcommand{\p}[1]{{\rm P}\left[#1\right]}

     % Norm
     % absolute value

%\parindent=0pt

\title[Varadhan estimates for RDEs driven by fBms]{Varadhan Estimates for rough differential equations driven by fractional Brownian motions}

\author[F. Baudoin  \and C. Ouyang \and X. Zhang]{Fabrice Baudoin,  Cheng Ouyang and  Xuejing Zhang}

%\author{Fabrice Baudoin\footnote{Department of Mathematics, Purdue University, 
%West Lafayette, IN, USA}, Cheng Ouyang\footnote{Department of Math, Statistics and Computer Science, University of Illinois at Chicago, IL, USA}, Xuejing Zhang\footnote{Department of Mathematics, Purdue University, 
%West Lafayette, IN, USA}}

\address{Fabrice Baudoin, Dept. Mathematics, Purdue University, 150 N. University St., West Lafayette, IN 47907-2067, USA.}
\email{fbaudoin@math.purdue.edu}

\address{Cheng Ouyang, Dept. Mathematics, Statistics and Computer Science, University of Illinois at Chicago, 851 S. Morgan St., Chicago, IL 60607, USA.}
\email{couyang@math.uic.edu}

\address{Xuejing Zhang, Dept. Mathematics, Purdue University, 150 N. University St., West Lafayette, IN 47907-2067, USA.}
\email{zhang239@math.purdue.edu}

\begin{document}
\maketitle

\begin{abstract}
In this work we study rough differential equations driven by a fractional Brownian motion with Hurst parameter $H>\frac{1}{4}$ and establish Varadhan's small time estimates for the density of solutions of  such equations. under H\"ormander's type conditions \end{abstract}

\tableofcontents

\section{Introduction}
Let $B=(B^1,...,B^d)$ be a $d$-dimensional fractional Brownian motion with Hurst parameter $H>\frac{1}{4}$, that is, $B$ is a $\mr^d$-valued centered Gaussian process with covariance
$$\me (B^i_tB^j_s)=\frac{1}{2}(t^{2H}+s^{2H}-|t-s|^{2H})\delta_{ij}.$$
A straightforward application of the Kolmogorov continuity theorem shows that the Hurst parameter $H$ controls the path regularity of $B$:  the sample paths of $B$ are  almost surely locally $\gamma$-H\"{o}lder continuous for all $\gamma<H$. When $H=\frac{1}{2}$, $B$ is a standard Brownian motion.

We are interested here in the following family of stochastic differential equations driven by $B$:
\begin{align}\label{SDE-intro}
X_t^\eps=x+\eps\sum_{i=1}^d\int_0^tV_i(X_s^\eps)dB_s^i,\quad\quad\varepsilon\in(0,1),
\end{align}
where  $V_1,\ldots,V_d$ are $C^\infty$-bounded vector fields on $\R^n$. When $H>\frac{1}{2}$, the above equation is understood in the sense of Young's integration. In that case, existence and uniqueness of solutions are well-established, for instance, in \cite{NR} and \cite{Za}. When $\frac{1}{4}<H\leq\frac{1}{2}$, equation (\ref{SDE-intro}) is interpreted in  the framework of rough path theory (see \cite{FV-bk, LQ}). Existence and uniqueness of solutions in this case can be found, for example, in \cite{LQ}. In particular, when $H=\frac{1}{2}$, this notion of solution coincides with the solution of the corresponding Stratonovitch stochastic differential equation. 

Once equation (\ref{SDE-intro}) is properly interpreted and solved with a unique solution, a natural question to address and most relevant to our purpose is the existence of a (smooth) density function of the random variable $X^\eps_t$. In the regular case $H>\frac{1}{2}$, the authors proved (see \cite{BH}) that under the classical H\"{o}rmander's condition on the vector fields, the law of $X^\eps_t$ admits a smooth density with respect to the Lebesgue measure. The existence and smoothness of density function is more involved in the rough case.  Still under H\"{o}rmander condition, the existence of a density function for $\frac{1}{4}<H<\frac{1}{2}$ is due to  \cite{CF}. The smoothness of this density is proved in \cite{H-P} for $H>\frac{1}{3}$, conditioned on the integrability of the Jacobian of the system which is established later in \cite{CLL}. Finally, smoothness of the density for all $H>\frac{1}{4}$ is proved in \cite{CHLT}.

\

With the existence of the density in hands, the next step is to study some of its basic properties. Small time asymptotics in the regular case $H>1/2$ have been studied in \cite{BO}. Gaussian or sub-Gaussian upper bounds have been established in  \cite{BOT} and \cite{BNOT}. The positivity of the density is proved in \cite{BNOT}. In this work, we are interested in Varadhan type estimates for this density. 

\

Throughout our discussion, we will  assume some uniform hypoellipticity condition inspired by Kusuoka \cite{Ku} (see Hypothesis \ref{UH condition} for details). %We are interested in the study of equation (\ref{SDE-intro}) in small $\eps\in(0,1)$ and 
The main problem we are interested in is to establish a version of Varadhan's estimates for the density of $X_1^\eps$. More precisely,   introduce the following functions on $\mr^n$, 

$$d^2(y)=\inf_{\Phi_1(h)=y}\frac{1}{2}\|h\|_{\msh}^2,\quad\mathrm{and}\quad d^2_R(y)=\inf_{\Phi_1(h)=y, \det\gamma_{\Phi_1(h)}>0}\frac{1}{2}\|h\|_\msh^2,$$

where $\Phi$ is obtained by solving the ordinary diferential equation driven by Cameron-Martin paths:
\begin{align*}\Phi_t(h)=x+\sum_{i=1}^d\int_0^tV_i(\Phi_s(h))dh_s^i.\end{align*}

Our  main result is the following.
\begin{theorem}\label{th: main intro}
Let us denote by $p_\eps(y)$ the density of $X_1^\eps$. Then
\begin{align*}
\liminf_{\eps\downarrow0}\eps^2\log p_\eps(y)\geq -d^2_R(y),
\end{align*}
and
\begin{align*}
\limsup_{\eps\downarrow0}\eps^2\log p_\eps(y)\leq -d^2(y).
\end{align*}
Moreover, if $\inf_{\Phi_1(h)=y, \det\gamma_{\Phi_1(y)}>0}\det\gamma_{\Phi_1(h)}>0$, then
\begin{align*}
\lim_{\eps\downarrow0}\eps^2\log p_\eps(y)=-d^2_R(y).
\end{align*}
\end{theorem}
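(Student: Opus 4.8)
The strategy is the classical Laplace/Varadhan scheme adapted to the rough-path setting, splitting the estimate into an upper bound and a lower bound, then matching them under the extra non-degeneracy assumption. For the \emph{upper bound} $\limsup_{\eps\to0}\eps^2\log p_\eps(y)\le -d^2(y)$, I would start from the large deviation principle for $X^\eps_1$ (the Freidlin--Wentzell / Schilder-type estimate in the rough path topology, available for fBm with $H>\tfrac14$ via the continuity of the Itô--Lyons map and the LDP for the enhanced fBm), whose rate function is exactly $I(y)=d^2(y)$. The density bound is then obtained by a standard localization: write $p_\eps(y)=\me[\delta_y(X^\eps_1)]$, use the uniform hypoellipticity (Hypothesis~\ref{UH condition}) to get, via Malliavin calculus, a representation $p_\eps(y)=\me[\1_{X^\eps_1\in B(y,r)}\,H_\eps]$ with integrability estimates on the Malliavin weight $H_\eps$ that are at worst polynomial in $\eps^{-1}$, then apply the LDP to $\mp(X^\eps_1\in B(y,r))$ and let $r\downarrow 0$. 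The Malliavin weights here are the main technical import from the existing literature (\cite{CHLT}, \cite{BNOT}); I would cite the relevant uniform-in-$\eps$ estimates rather than re-derive them.

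For the \emph{lower bound} $\liminf_{\eps\to0}\eps^2\log p_\eps(y)\ge -d^2_R(y)$, the idea is to localize near a \emph{non-degenerate} optimal Cameron--Martin path. Fix $h$ with $\Phi_1(h)=y$, $\det\gamma_{\Phi_1(h)}>0$, and $\tfrac12\|h\|_\msh^2$ close to $d^2_R(y)$. Perform a Girsanov-type change of measure shifting $B$ by $h/\eps$; under the new measure $X^\eps_1$ becomes a perturbation of the deterministic skeleton $\Phi_1(h)=y$, and the density of $X^\eps_1$ at $y$ can be compared to the density of the shifted process at $0$, picking up the factor $\exp(-\tfrac{1}{2\eps^2}\|h\|_\msh^2)$ from the Cameron--Martin density. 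One then needs a lower bound for the density of the shifted (centered) process at a fixed point, uniform as $\eps\to0$; this is where the hypothesis $\det\gamma_y>0$ enters, since it guarantees that the Malliavin covariance matrix of the shifted process is non-degenerate in the limit, so that the localized density stays bounded below by a constant (or at worst by $\eps^{-c}$, which is absorbed after taking $\eps^2\log$). This is essentially a Taylor expansion of the solution map around $\Phi(h)$ combined with the inverse function theorem in Wiener space / a non-degenerate Malliavin calculus argument; I would model it on \cite{BO} and the Brownian treatments of Varadhan-type lower bounds.

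Finally, to get the \emph{equality} $\lim_{\eps\to0}\eps^2\log p_\eps(y)=-d^2_R(y)$ under the extra hypothesis $\inf\{\det\gamma_{\Phi_1(h)}:\Phi_1(h)=y,\ \det\gamma_y>0\}>0$, I need the upper bound to improve from $-d^2(y)$ to $-d^2_R(y)$. The point is that under this uniform non-degeneracy the contribution to $p_\eps(y)$ coming from neighborhoods of \emph{degenerate} optimal paths is negligible on the exponential scale: along such paths the Malliavin covariance matrix degenerates, which forces the density contribution there to decay faster than any fixed exponential rate. Concretely, I would decompose $p_\eps(y)$ according to whether the Malliavin determinant $\det\gamma_{X^\eps_1}$ is larger or smaller than a small threshold $\eta$; on $\{\det\gamma_{X^\eps_1}>\eta\}$ the density estimate gives the bound $-d^2_R(y)+o(1)$ via the LDP restricted to this set (whose rate function is $d^2_R$), while on $\{\det\gamma_{X^\eps_1}\le\eta\}$ one uses the integrability estimates on inverse Malliavin matrices together with the uniform lower bound hypothesis to show this part contributes $o(\eps^{-2})$ after taking logarithm, uniformly as first $\eps\to 0$ then $\eta\to0$. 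The \textbf{main obstacle} is precisely this last step: controlling the ``bad set'' $\{\det\gamma_{X^\eps_1}$ small$\}$ uniformly in $\eps$ requires sharp small-time/small-$\eps$ estimates on negative moments of the Malliavin matrix of the rough flow, and marrying those estimates cleanly with the large deviation rate function $d^2_R$ on the complementary set is the delicate part; the rough-path regularity $H>\tfrac14$ makes the norm estimates on the Jacobian and its Malliavin derivative considerably more involved than in the Brownian case, so I would lean heavily on the integrability results of \cite{CLL} and \cite{CHLT}.
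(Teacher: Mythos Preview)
Your approach to the upper and lower bounds is essentially the paper's: the lower bound via a Cameron--Martin shift to a non-degenerate skeleton path $h$ followed by a Taylor expansion of $\Phi_1(\eps B+h)$ around $\Phi_1(h)$ (Proposition~\ref{th: smoothness in eps} in the paper), and the upper bound via integration by parts plus the LDP for $X^\eps_1$. One caveat: the uniform-in-$\eps$ estimate $\|\gamma_{X^\eps_1}^{-1}\|_r\le c_r\,\eps^{-2l}$ under the hypoelliptic Hypothesis~\ref{UH condition} is not something you can simply cite---it is the main technical lemma of the paper (Lemma~\ref{th: Malliavin est subelliptic}), proved by rewriting $(\bj^\eps_t)^{-1}V^\eps_{[I]}(X^\eps_t)$ through an auxiliary linear system $\beta^{J,\eps}_I$ and invoking a separate inverse-moment estimate on the resulting matrix.

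For the equality $\lim_{\eps\downarrow0}\eps^2\log p_\eps(y)=-d^2_R(y)$, your decomposition by the size of $\det\gamma_{X^\eps_1}$ is the right idea, but your treatment of the small-determinant piece has a genuine gap. You propose to control the contribution of $\{\det\gamma_{X^\eps_1}\le\eta\}$ via ``integrability estimates on inverse Malliavin matrices together with the uniform lower bound hypothesis''. This does not work as stated: after integration by parts the Malliavin weight is only polynomially bounded in $\eps^{-1}$, and the LDP for the couple $(X^\eps_1,\gamma_{X^\eps_1})$ on the event $\{\Phi_1(h)\text{ near }y,\ \det\gamma_{\Phi_1(h)}\le\eta\}$ gives, in the limit $\eta\to0$, only the rate $\inf\{\tfrac12\|h\|_\msh^2:\Phi_1(h)=y,\ \det\gamma_{\Phi_1(h)}=0\}$, which need not exceed $d^2_R(y)$. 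The extra hypothesis says nothing about the \emph{degenerate} paths, so you cannot conclude their probabilistic contribution is exponentially negligible at rate $d^2_R$.

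The paper sidesteps this entirely: using a smooth cutoff $G_\eps=g(\det\gamma_{X^\eps_1})$ supported on $\{\det\gamma\le\tfrac{\gamma}{2}\}$, it shows $\me\,G_\eps\,\chi(X^\eps_1)\,\delta_y(X^\eps_1)=0$ \emph{exactly}. The argument is a Karhunen--Lo\`eve approximation plus a support/positivity result for Watanabe distributions (cf.\ \cite[Proposition~4.2.1]{Nu-flour}): if this expectation were nonzero one could find a Cameron--Martin $h$ with $\Phi_1(h)=y$ and $0<\det\gamma_{\Phi_1(h)}<\tfrac{\gamma}{2}$, contradicting the definition of $\gamma$. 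The remaining piece $I_2$ is then handled via the LDP for the couple $(X^\eps_1,\gamma_{X^\eps_1})$ (Theorem~\ref{th: LDP}), whose rate function on $\{\det\gamma\ge\tfrac{\gamma}{4}\}$ is bounded below by $d^2_R(y)$. So the ``main obstacle'' you identify is dissolved by a qualitative support argument rather than a quantitative small-ball estimate.
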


In the classical case when $B$ is a standard Brownian motion, these  results were studied by numerous authors, including L\'{e}andre \cite{Leandre}, Ben Arous and L\'{e}andre \cite{BL} and L\'{e}andre and Russo \cite{LR}.  Our result is obviously an extension of the classical result to the fractional Brownian motion case, in which many recent developments in rough paths theory are employed. In particular, we would like to mention the recent breakthrough \cite{CLL} in the integrality of the Jacobian of equation (\ref{SDE-intro}). This result  leads to the integrability of the Malliavin derivative $\bd X_1^\eps$ and inverse Malliavin matrix $\gamma_{X_1^\eps}$ of $X_1^\eps$. One of the main technical difficulties of this work is  to show that under the uniform hypoelliptic condition, for any fixed $r>0$
$$\|\gamma_{X_1^\eps}^{-1}\|_r\leq c_r\eps^{-2l},\quad\quad \eps\in(0,1],$$
for some constant $c_r$ depending on $r$ and constant $l$ depending on the structure of vector fields $V_i$'s which is given in Hypothesis \ref{UH condition}.

Finally, let us mention that some small-time asysmptotics of density function results have already been studied in the smooth case $H>\frac{1}{2}$ for elliptic systems in \cite{BO} and \cite{Inahama2}. These results clearly imply the Varadhan's asymptotics stated in Theorem \ref{th: main intro}. In the rough case $\frac{1}{4}<H<\frac{1}{2}$, the Laplace approximation is obtain by Inahama \cite{Inahama}, which is along the same line of research as the current paper.

The paper is organized as follows.  In section 2, we give the necessary background on rough path and Malliavin calculus that will be needed throughout the paper. We state and prove our main result in Section 3.

\section{Preliminary material}

For some fixed $H>\frac{1}{4}$, we consider $(\oom,\cf,\PP)$ the canonical probability space associated with the fractional
Brownian motion (in short fBm) with Hurst parameter $H$. That is,  $\oom=\cac_0([0,1])$ is the Banach space of continuous functions
vanishing at $0$ equipped with the supremum norm, $\cf$ is the Borel sigma-algebra and $\PP$ is the unique probability
measure on $\oom$ such that the canonical process $B=\{B_t=(B^1_t,\ldots,B^d_t), \; t\in [0,1]\}$ is a fractional Brownian motion with Hurst
parameter $H$.
In this context, let us recall that $B$ is a $d$-dimensional centered Gaussian process, whose covariance structure is induced by
\begin{align}\label{covariance}
R\left( t,s\right) :=\EE\lc  B_s^j \, B_t^j\rc
=\frac{1}{2}\left( s^{2H}+t^{2H}-|t-s|^{2H}\right),
\quad
s,t\in[0,1] \mbox{ and } j=1,\ldots,d.
\end{align}
In particular it can be shown, by a standard application of Kolmogorov's criterion, that $B$ admits a continuous version
whose paths are $\ga$-H\"older continuous for any $\ga<H$.

\subsection{Rough path}
For $N\in\mathbb{N}$, recall that the truncated algebra $T^{N}(\mathbb{R}%
^{d})$ is defined by
$$
T^{N}(\mathbb{R}^{d})=\bigoplus_{m=0}^{N}(\mathbb{R}%
^{d})^{\otimes m},
$$
with the convention $(\mathbb{R}^{d})^{\otimes
0}=\mathbb{R}$. The set $T^{N}(\mathbb{R}^{d})$ is equipped with a straightforward
vector space structure, plus an operation $\otimes$ defined by
\[
\pi_{m}(g\otimes h)=\sum_{k=0}^{N}\pi_{m-k}(g)\otimes\pi_{k}(h),\qquad g,h\in
T^{N}(\mathbb{R}^{d}),
\]
where $\pi_{m}$ designates the projection on the $m$th tensor level. Then
$(T^{N}(\mathbb{R}^{d}),+,\otimes)$ is an associative algebra with unit
element $\mathbf{1} \in(\mathbb{R}^{d})^{\otimes0}$.

\smallskip

For $s<t$ and $m\geq2$, consider the simplex $\Delta_{st}^{m}=\{(u_{1}%
,\ldots,u_{m})\in\lbrack s,t]^{m};\,u_{1}<\cdots<u_{m}\} $, while the
simplices over $[0,1]$ will be denoted by $\Delta^{m}$. A continuous map
$\mathbf{x}:\Delta^{2}\rightarrow T^{N}(\mathbb{R}^{d})$ is called a
multiplicative functional if for $s<u<t$ one has $\mathbf{x}_{s,t}%
=\mathbf{x}_{s,u}\otimes\mathbf{x}_{u,t}$. An important example arises from
considering paths $x$ with finite variation: for $0<s<t$ we set
\begin{equation}
\mathbf{x}_{s,t}^{m}=\sum_{1\leq i_{1},\ldots,i_{m}\leq d}\biggl( \int%
_{\Delta_{st}^{m}}dx^{i_{1}}\cdots dx^{i_{m}}\biggr) \,e_{i_{1}}\otimes
\cdots\otimes e_{i_{m}}, \label{eq:def-iterated-intg}%
\end{equation}
where $\{e_{1},\ldots,e_{d}\}$ denotes the canonical basis of $\mathbb{R}^{d}%
$, and then define the truncated \textit{signature} of $x$ as
\[
S_{N}(x):\Delta^{2}\rightarrow T^{N}(\mathbb{R}^{d}),\qquad(s,t)\mapsto
S_{N}(x)_{s,t}:=1+\sum_{m=1}^{N}\mathbf{x}_{s,t}^{m}.
\]
The function $S_{N}(x)$ for a smooth function $x$ will be our typical example of multiplicative functional. Let us stress the fact that those elements take values in the strict subset
$G^{N}(\mathbb{R}^{d})\subset T^{N}(\mathbb{R}^{d})$ given by the \emph{group-like}
elements
\[
G^{N}(\mathbb{R}^{d}) = \exp^{\oplus}\bigl(L^{N}(\mathbb{R}^{d})\bigr),
\]
where $L^{N}(\mathbb{R}^{d})$ is the linear span of all elements that can be
written as a commutator of the type $a \otimes b - b\otimes a$ for two
elements in $T^{N}(\mathbb{R}^{d})$. This set is called free nilpotent group of step $N$, and is equipped with the classical Carnot-Caratheodory norm which we simply denote by $|\cdot|$. For a path $\mathbf{x}\in\cC([0,1],G^{N}(\R^d))$, the $p$-variation norm of $\mathbf{x}$ is defined to be
\begin{align*}
\|\mathbf{x}\|_{p-{\rm var}; [0,1]}=\sup_{\Pi \subset [0,1]}\left(\sum_i |\mathbf{x}_{t_i}^{-1}\otimes \mathbf{x}_{t_{i+1}}|^p\right)^{1/p}
\end{align*}
where the supremum is taken over all subdivisions $\Pi$ of $[0,1]$.

\smallskip

With these notions in hand, let us briefly define what we mean by geometric rough path (we refer to \cite{FV-bk,LQ} for a complete overview): for $p\ge 1$, an element $x:\ou\to G^{\lfloor p \rfloor}(\R^d)$ is said to be a geometric rough path if it is the $p$-var limit of a sequence $S_{\lfloor p \rfloor}(x^{m})$ of lifts of smooth functions $x^m$. In particular, it is an element of the space
$$\cC^{p-{\rm var}; [0,1]}([0,1], G^{\lfloor p \rfloor}(\R^d))=\{\mathbf{x}\in \cC([0,1], G^{\lfloor p \rfloor}(\R^d)): \|\mathbf{x}\|_{p-{\rm var}; [0,1]}<\infty\}.
$$

Let $\mathbf{x}$ be a geometric $p$-rough path with its approximating sequence $x^m$, that is, $x^m$ is a sequence of smooth functions such that $\mathbf{x}^m=S_{\lfloor p\rfloor}(x^m)$ converges to $\mathbf{x}$ in the $p$-var norm. Fix any $1\leq q\leq p$ so that $p^{-1}+ q^{-1}>1$ and pick any $h\in\cC^{q-{\rm var}}([0,1], \R^d)$. One can define the translation of $\mathbf{x}$ by h, denoted by $T_h(\mathbf{x})$ by
$$T_h(\mathbf{x})=\lim_{n\to\infty}S_{\lfloor p\rfloor}({x}^m+h).$$
It can be shown that $T_h(\mathbf{x})$ is an element in $\cC^{p-{\rm var}}([0,1], G^{\lfloor p \rfloor}(\R^d))$. Moreover, one has the following continuity property.

\begin{lemma}\label{th: rough-couple-conti}
For any $1\leq q\leq p$ so that $p^{-1}+ q^{-1}>1$, let $(\mathbf{x}, h)\in \cC^{p-{\rm var}}([0,1], G^{\lfloor p \rfloor}(\R^d))\times \cC^{q-{\rm var}}([0,1], \R^d)$.  Denoted by $T_h(\mathbf{x})\in \cC^{p-{\rm var}}([0,1], G^{\lfloor p \rfloor}(\R^d))$the translation of $\mathbf{x}$ by $h$. We have
\begin{enumerate}
\item There is some constant $C$ depending only on $p$ and $q$,
$$\|T_h(\mathbf{x})\|_{p-{\rm var};[0,1]}\leq C(\|\mathbf{x}\|_{p-{\rm var};[0,1]}+\|h\|_{q-{\rm var};[0,1]}).$$
\item The rough path translation $(\mathbf{x}, h)\mapsto T_h(\mathbf{x})$ as a map from
$$\cC^{p-{\rm var}}([0,1], G^{\lfloor p \rfloor}(\R^d))\times \cC^{q-{\rm var}}([0,1], \R^d)\to \cC^{p-{\rm var}}([0,1], G^{\lfloor p \rfloor}(\R^d))$$
is uniformly continuous on bounded sets.
\end{enumerate}
\end{lemma}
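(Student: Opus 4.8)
The plan is to reduce everything to smooth driving signals and then to prove a single pathwise estimate, additive in the two norms, from which both parts follow. Since, by definition, $T_h(\mathbf{x})$ is the $p$-variation limit of $S_{\lfloor p\rfloor}(x^m+h)$ for a smooth sequence $x^m\to\mathbf{x}$, and the $p$-variation norm and metric are lower semicontinuous under such convergence, it suffices to prove (1) for smooth $x$ with a constant $C=C(p,q)$ independent of the path and then pass to the limit. Approximating $h$ in $q$-variation as well (legitimate since $q\le p$ and $p^{-1}+q^{-1}>1$, so that the relevant Young integrals are stable), we may also take $h$ smooth, and then $S_{\lfloor p\rfloor}(x+h)$ is the ordinary iterated-integral lift of $x+h$.

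Set $N=\lfloor p\rfloor$. For $s<t$, expanding $d(x+h)=dx+dh$ in each factor writes the level-$m$ iterated integral of $x+h$ over $\Delta^m_{st}$ as a sum over the $2^m$ words $w\in\{x,h\}^m$ of mixed iterated integrals $I^w_{s,t}$, obtained by integrating the letters of $w$ in order; grouping consecutive $x$'s turns $w$ into an alternating concatenation of $\mathbf{x}$-blocks (a block of length $k$ contributing a tensor built from $\mathbf{x}^k$) separated by single $dh$-increments. The technical core is a neo-classical estimate for each $I^w_{s,t}$: it is bounded by a constant to the power $m$ times a super-additive control on $[0,1]$ — built additively from suitable powers of $\|\mathbf{x}\|_{p-{\rm var};[s,t]}$ and $\|h\|_{q-{\rm var};[s,t]}$ — with a genuine factorial gain in $m$. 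One obtains this by processing $w$ from the outside in: each maximal $\mathbf{x}$-block is estimated by Lyons' factorial inequality for $\|\mathbf{x}^k\|$, each intervening $dh$-increment is absorbed by Young's inequality (admissible precisely because $p^{-1}+q^{-1}>1$), and the nested-integral structure supplies the factorial that defeats the $2^m$ words. Summing over $w$ and over $m\le N$, taking $p$-th roots, passing to the homogeneous Carnot--Carath\'eodory norm on $G^N(\R^d)$, and finally taking the supremum over partitions of $[0,1]$ gives $\|T_h(\mathbf{x})\|_{p-{\rm var};[0,1]}\le C(\|\mathbf{x}\|_{p-{\rm var};[0,1]}+\|h\|_{q-{\rm var};[0,1]})$; keeping the exponents $p$ and $q$ separated in the block bounds, rather than over-estimating everything by a single $p$-power, is what yields the linear (not merely polynomial) dependence, consistently with the dilation identity $T_{\lambda h}(\delta_\lambda\mathbf{x})=\delta_\lambda T_h(\mathbf{x})$. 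This proves (1).

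For (2), fix $R>0$ and take $(\mathbf{x},h)$ and $(\tilde{\mathbf{x}},\tilde h)$ with norms at most $R$. Apply the same word expansion to the difference $T_h(\mathbf{x})^m_{s,t}-T_{\tilde h}(\tilde{\mathbf{x}})^m_{s,t}$ and telescope, replacing one block or one $dh$-increment at a time; each resulting term then carries exactly one factor measuring either a level-wise distance between $\mathbf{x}$ and $\tilde{\mathbf{x}}$ (controlled by the homogeneous $p$-variation metric $\rho_{p-{\rm var}}(\mathbf{x},\tilde{\mathbf{x}})$) or $\|h-\tilde h\|_{q-{\rm var}}$, multiplied by $R$ to a bounded power and by the same factorial decay. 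Summing over words and levels yields $\rho_{p-{\rm var}}(T_h(\mathbf{x}),T_{\tilde h}(\tilde{\mathbf{x}}))\le C_R\big(\rho_{p-{\rm var}}(\mathbf{x},\tilde{\mathbf{x}})+\|h-\tilde h\|_{q-{\rm var};[0,1]}\big)$, which is even locally Lipschitz, hence uniformly continuous on bounded sets.

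The one genuinely delicate point is the neo-classical estimate for the mixed integrals $I^w_{s,t}$: one must run a Lyons-type argument in the mixed ``Young meets rough'' regime so that the factorial gain survives against the $2^m$ words, with the two regularities $p$ and $q$ correctly interleaved. An equivalent, arguably cleaner route that isolates the same difficulty is to observe that, because $p^{-1}+q^{-1}>1$, the pair $(\mathbf{x},h)$ lifts canonically to a single geometric $p$-rough path $\mathbf{z}$ over $\R^{2d}$ (the cross and higher mixed iterated integrals being constructed by Young integration and then Lyons' extension theorem), that $T_h(\mathbf{x})$ is the image of $\mathbf{z}$ under the linear map $\R^{2d}\to\R^d$, $(a,b)\mapsto a+b$ (pushforward of rough paths under linear maps), and that both the joint-lift map and linear pushforward are continuous and obey the desired additive norm bound — the work being then hidden in the continuity of the joint lift, which is the same estimate in another guise.
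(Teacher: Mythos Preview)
The paper does not actually prove this lemma; it is stated as a known result from rough paths theory (implicitly referring to \cite{FV-bk}, where the translation operator and its continuity are treated in Chapter~9 via the Young pairing). There is therefore no ``paper's own proof'' to compare against.

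Your sketch is correct and is essentially the standard argument. The alternative route you mention at the end --- lift $(\mathbf{x},h)$ to a joint geometric $p$-rough path over $\R^{2d}$ using Young integration for the cross terms (admissible since $p^{-1}+q^{-1}>1$), then push forward by the linear map $(a,b)\mapsto a+b$ --- is precisely the Young pairing construction in \cite{FV-bk}, and is the cleaner way to organize the bookkeeping. Your direct word-expansion approach unpacks the same estimate by hand; both are valid, and the ``delicate point'' you flag (keeping the factorial gain against the $2^m$ words with two interleaved regularities) is exactly the content of the Young pairing estimates. One minor caution: in your reduction step you approximate $h$ by smooth paths in $q$-variation, but $\cC^{q-{\rm var}}$ is not separable and smooth paths are not dense there; the correct statement is that one works with geometric $q$-rough paths (closure of smooth lifts) or argues by interpolation at a slightly larger $q'>q$ still satisfying $p^{-1}+(q')^{-1}>1$. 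This is a routine fix and does not affect the argument.
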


\begin{remark}
A typical situation of the above translation of $\mathbf{x}$ by $h$ in the present paper is when $\mathbf{x}=\mathbf{B}$, the fractional Brownian motion lifted as a rough path, and $h$ is a Cameron-Martin element of $B$. In this case, we simply denote $T_h(\mathbf{B})=B+h.$ 
\end{remark}

According to the considerations above, in order to prove that a lift of a $d$-dimensional fBm as a geometric rough path exists it is sufficient to build enough iterated integrals of $B$ by a limiting procedure. Towards this aim, a lot of the information concerning $B$ is encoded in the rectangular increments of the covariance function $R$ (defined by \eqref{covariance}), which are given by
\begin{equation*}
R_{uv}^{st} \equiv \EE\lc (B_t^1-B_s^1) \, (B_v^1-B_u^1) \rc.
\end{equation*}
We then call 2-dimensional $\rho$-variation of $R$ the quantity
\begin{equation*}
V_{\rho}(R)^{\rho} \equiv
\sup \lcl
\lp \sum_{i,j} \lln R_{s_{i} s_{i+1}}^{t_{j}t_{j+1}} \rrn^{\rho} \rp^{1/\rho}; \, (s_i), (t_j)\in \Pi
\rcl,
\end{equation*}
where $\Pi$ stands again for the set of partitions of $\ou$. The following result is now well known for fractional Brownian motion:
\begin{proposition}\label{prop:fbm-rough-path}
For a fractional Brownian motion with Hurst parameter $H$, we have $V_{\rho}(R)<\infty$ for all $\rho\ge1/(2H)$. Consequently, for $H>1/4$ the process $B$ admits a lift $\mathbf{B}$ as a geometric rough path of order $p$ for any $p>1/H$.
\end{proposition}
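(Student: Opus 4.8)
The plan is to treat the two assertions separately: the finiteness of $V_\rho(R)$ is a purely analytic statement about the covariance function, while the existence of the geometric lift $\mathbf B$ then follows by invoking the general theory of Gaussian rough paths.

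\emph{Step 1: finite $2$D $\rho$-variation of $R$.} Since $\rho\mapsto V_\rho(R)$ is non-increasing (the same elementary monotonicity that makes $p$-variation decreasing in $p$), it is enough to handle the critical exponent $\rho=1/(2H)$. Expanding the rectangular increment via \eqref{covariance} gives the closed form
\[
R_{uv}^{st}=\tfrac12\big(|t-u|^{2H}+|s-v|^{2H}-|t-v|^{2H}-|s-u|^{2H}\big).
\]
When $H\ge\tfrac12$ one has $\rho\le1$, and in fact $R$ has finite $1$-variation on $[0,1]^2$: for $H>\tfrac12$ because $\partial_s\partial_t R(s,t)=H(2H-1)|t-s|^{2H-2}$ is integrable, and the case $H=\tfrac12$ ($R(s,t)=s\wedge t$) by direct inspection; finite $1$-variation gives finite $\rho$-variation for every $\rho\ge1$. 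The substantive regime is $\tfrac14<H<\tfrac12$, where $1<\rho<2$. Writing $g(x)=x^{2H}$, for $s\le t\le u\le v$ one has the identity $R_{uv}^{st}=\tfrac12\int_0^{v-u}\!\!\int_0^{t-s}g''\big((u-t)+w+z\big)\,dz\,dw$, which yields, for rectangles separated from the diagonal, the decaying bound $|R_{uv}^{st}|\le C_H\,(t-s)(v-u)(u-t)^{2H-2}$; for rectangles that meet the diagonal one falls back on the Cauchy--Schwarz bound $|R_{uv}^{st}|\le|t-s|^{H}|v-u|^{H}$, available since $\EE(B_t-B_s)^2=|t-s|^{2H}$. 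Summing $\rho$-th powers of the first bound by a $2$D Young-type estimate, and controlling the near-diagonal contribution of the second (here the hypothesis $\rho<2$ is essential), one gets $V_{1/(2H)}(R;[0,1]^2)<\infty$ together with a genuine $2$D control $\omega$ dominating the $\rho$-variation of $R$ over every sub-rectangle.

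\emph{Step 2: existence of the geometric rough path lift.} Fix $H>\tfrac14$, so $\rho:=1/(2H)<2$. The coordinates $B^1,\dots,B^d$ are independent copies of a centered Gaussian process whose covariance has finite $2$D $\rho$-variation by Step 1, so the Gaussian rough-path machinery applies (see \cite{FV-bk}). Concretely, let $B(m)$ be the dyadic piecewise-linear interpolation of $B$ and $\mathbf B(m)=S_{\lfloor p\rfloor}(B(m))$ its truncated signature. The $\rho$-variation control of $R$, combined with Gaussian hypercontractivity (which reduces all $L^q$-moments of the iterated integrals to their $L^2$-counterparts), supplies the moment bounds required by a Kolmogorov-type criterion for rough paths and shows that $\big(\mathbf B(m)\big)_m$ is Cauchy in $p$-variation, in $L^q$ for every $q<\infty$, as soon as $p>2\rho=1/H$. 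The limit $\mathbf B$ is independent of the approximating sequence and, being a $p$-variation limit of signatures of smooth paths, is a geometric $p$-rough path in the sense recalled before Lemma~\ref{th: rough-couple-conti}. Since $p>1/H$ is arbitrary this produces the asserted lift, of step $\lfloor p\rfloor\in\{1,2,3\}$ according to whether $H$ lies in $(\tfrac12,1)$, $(\tfrac13,\tfrac12]$ or $(\tfrac14,\tfrac13]$.

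\emph{Where the work is.} Step 2 is entirely standard, and in the paper we will simply cite the relevant statements of \cite{FV-bk,LQ}. The only genuinely delicate point is the near-diagonal part of Step 1 for $\tfrac14<H<\tfrac12$: the closed-form bound on $R_{uv}^{st}$ does not decay for rectangles straddling the diagonal, so to see that their $\rho$-th powers nevertheless sum one must combine the sharp variance estimate with careful bookkeeping over the diagonal band and use $\rho<2$ in an essential way; this is exactly the computation carried out in \cite[Ch.~15]{FV-bk}.
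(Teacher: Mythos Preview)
Your proposal is correct and aligns with the paper's own treatment: the paper proves this proposition by simply citing \cite[Proposition 15.5]{FV-bk} for the finiteness of $V_\rho(R)$ and \cite[Chapter 15]{FV-bk} for the rough-path lift, and what you have written is precisely a sketch of the arguments behind those references. In effect you have unpacked the content of the citation rather than taking a different route.
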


\begin{proof}
The fact that $V_{\rho}(R)<\infty$ for all $\rho\ge1/(2H)$ is the content of \cite[Proposition 15.5]{FV-bk}. The implication on the rough path construction can also be found in \cite[Chapter 15]{FV-bk}.

\end{proof}

\subsection{Malliavin Calculus} We introduce the basic framework of Malliavin calculus in this subsection.  The reader is invited to read the corresponding chapters in  \cite{Nu06} for further details. Let $\mathcal{E}$ be the space of $\mathbb{R}^d$-valued step
functions on $[0,1]$, and $\mathcal{H}$  the closure of
$\mathcal{E}$ for the scalar product:
\[
\langle (\mathbf{1}_{[0,t_1]} , \cdots ,
\mathbf{1}_{[0,t_d]}),(\mathbf{1}_{[0,s_1]} , \cdots ,
\mathbf{1}_{[0,s_d]}) \rangle_{\mathcal{H}}=\sum_{i=1}^d
R(t_i,s_i).
\]
We denote by $K^*_H$ the isometry between $\mathcal{H}$ and $L^2([0,1])$.
When $H>\frac{1}{2}$ it can be shown that $\mathbf{L}^{1/H} ([0,1], \mathbb{R}^d)
\subset \mathcal{H}$, and when $\frac{1}{4}<H<\frac{1}{2}$ one has
$$C^\gamma\subset \mathcal{H}\subset L^2([0,1])$$
for all $\gamma>\frac{1}{2}-H$.
% and that for $\phi,\psi \in \mathbf{L}^{1/H} ([0,1], \mathbb{R}^d)$,
%we have
%\[
%\langle \phi , \psi \rangle_{\mathcal{H}}=H(2H-1)\int_0^1 \int_0^1
%\mid s-t \mid^{2H-2} \langle \phi (s) , \psi(t)
%\rangle_{\mathbb{R}^d} ds dt\;.
%\]

We remark that $\ch$ is the reproducing kernel Hilbert space for $B$. Let $\msh$ be the Cameron-Martin space of $B$,
one proves that the operator $\crr:=\crr_H :\ch \rightarrow \msh$ given by
\begin{equation}\label{eq:def-R}
\crr \psi := \int_0^\cdot K_H(\cdot,s) [K^*_H \psi](s)\, ds
\end{equation}
defines an isometry between $\ch$ and $\msh$. %It is worth noticing at this point that the space $\msh$ yields the accurate notion of Cameron-Martin space in the fBm context (for Brownian motion one obtains $\ch=L^2(\ou)$ and $\msh=W^{1,2}(\ou)$). 
Let us now quote from \cite[Chapter 15]{FV-bk} a result relating the 2-d regularity of $R$ and the regularity of $\msh$.
\begin{proposition}\label{prop:imbed-bar-H}
Let $B$ be a fBm with Hurst parameter $\frac{1}{4}<H<\frac{1}{2}$. Then one has $\msh\subset \cac^{\rho-{\rm var}}$ for $\rho>(H+1/2)^{-1}$. Furthermore, the following quantitative bound holds:
\begin{equation*}
\Vert h \Vert_{\msh} \ge \frac{\Vert h \Vert_{\rho-{\rm var}}}{(V_\rho(R))^{1/2}}.
\end{equation*}
\end{proposition}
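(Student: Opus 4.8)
The plan is to establish the quantitative inequality directly—it contains the inclusion—by dualizing the $\rho$-variation seminorm against $\ell^{\rho'}$ (with $1/\rho+1/\rho'=1$) and transferring the computation to the reference Hilbert space $\ch$ through the isometry $\crr$. Write $h=\crr\psi\in\msh$, so that $\|h\|_{\msh}=\|\psi\|_{\ch}$. Since $\crr(\mathbf{1}_{[0,t]}e_i)=R(\cdot,t)e_i$ is the reproducing element of the Gaussian RKHS $\msh$ at $(t,i)$ and $\crr$ is an isometry, one has $h^i_t-h^i_s=\langle\psi,\mathbf{1}_{[s,t]}e_i\rangle_{\ch}$ for all $0\le s\le t\le 1$ and $i=1,\dots,d$. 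Hence, for any partition $(t_j)\in\Pi$ of $\ou$ and any finite family $(a_j)\subset\R^d$, using the identity $\langle\mathbf{1}_{[t_j,t_{j+1}]}e_i,\mathbf{1}_{[t_k,t_{k+1}]}e_{i'}\rangle_{\ch}=\delta_{ii'}R_{t_j t_{j+1}}^{t_k t_{k+1}}$ and Cauchy--Schwarz in $\ch$,
\begin{equation*}
\Big|\sum_j\langle a_j,h_{t_j,t_{j+1}}\rangle_{\R^d}\Big|=\Big|\Big\langle\psi,\ \sum_j\sum_{i=1}^d a^i_j\,\mathbf{1}_{[t_j,t_{j+1}]}e_i\Big\rangle_{\ch}\Big|\le\|h\|_{\msh}\,\Big(\sum_{j,k}\langle a_j,a_k\rangle_{\R^d}\,R_{t_j t_{j+1}}^{t_k t_{k+1}}\Big)^{1/2}.
\end{equation*}

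The second step is to estimate the quadratic form on the right by the $2$-dimensional $\rho$-variation of $R$. Applying H\"older's inequality to the double sum with exponents $\rho'$ and $\rho$ and then the definition of $V_\rho(R)$ (the same partition being admissible in both variables),
\begin{equation*}
\Big|\sum_{j,k}\langle a_j,a_k\rangle_{\R^d}\,R_{t_j t_{j+1}}^{t_k t_{k+1}}\Big|\le\Big(\sum_{j,k}\big(|a_j|\,|a_k|\big)^{\rho'}\Big)^{1/\rho'}\Big(\sum_{j,k}\big|R_{t_j t_{j+1}}^{t_k t_{k+1}}\big|^{\rho}\Big)^{1/\rho}\le V_\rho(R)\,\Big(\sum_j|a_j|^{\rho'}\Big)^{2/\rho'}.
\end{equation*}
Feeding this into the previous display, taking the supremum over all $(a_j)$ with $\sum_j|a_j|^{\rho'}\le 1$, and invoking $\ell^{\rho'}$--$\ell^{\rho}$ duality, we obtain $(\sum_j|h_{t_j,t_{j+1}}|^{\rho})^{1/\rho}\le V_\rho(R)^{1/2}\|h\|_{\msh}$; a supremum over $(t_j)\in\Pi$ then gives $\|h\|_{\rho-{\rm var}}\le V_\rho(R)^{1/2}\|h\|_{\msh}$, which is the asserted bound, and in particular $\msh\subset\cac^{\rho-{\rm var}}$ as soon as $V_\rho(R)<\infty$.

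It remains to match the stated range $\rho>(H+\tfrac12)^{-1}$. Proposition \ref{prop:fbm-rough-path} only gives $V_\rho(R)<\infty$ for $\rho\ge 1/(2H)$, which is strictly larger than $(H+\tfrac12)^{-1}$ when $H<\tfrac12$; the loss comes precisely from the diagonal entries $R_{t_j t_{j+1}}^{t_j t_{j+1}}=|t_{j+1}-t_j|^{2H}$, for which the crude double H\"older above is wasteful. To reach the sharp exponent I would instead split the sum $\sum_{j,k}$ into a near-diagonal part ($|j-k|\le1$), estimated through the pointwise bound $|R_{t_j t_{j+1}}^{t_k t_{k+1}}|\lesssim|t_{j+1}-t_j|^{H}|t_{k+1}-t_k|^{H}$ and a H\"older inequality calibrated so that $\sum_j|t_{j+1}-t_j|^{2Hr}$ stays bounded—which is exactly possible for $\rho>(H+\tfrac12)^{-1}$—and an off-diagonal part, controlled by the integrability away from the diagonal of the mixed derivative $\partial_s\partial_t R$ (singular like $|t-s|^{2H-2}$) via a two-dimensional Young estimate. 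Equivalently, one may invoke the Decreusefond--\"{U}st\"{u}nel identification $\msh=I_{0+}^{H+1/2}(L^2(\ou))$ with equivalent norms and the embedding of the fractional Sobolev space $B^{H+1/2-\varepsilon}_{2,2}$ into paths of finite $(H+\tfrac12-\varepsilon)^{-1}$-variation, letting $\varepsilon\downarrow0$. This refinement—using the one-dimensional regularity of $h$ (or of the kernel $K_H$) on top of the two-dimensional regularity of $R$—is the only genuinely delicate point; the rest is the soft duality computation above.
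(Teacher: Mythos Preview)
The paper does not actually prove this proposition: it merely quotes it from \cite[Chapter 15]{FV-bk}. Your duality argument for the quantitative bound $\|h\|_{\rho-\text{var}}\le V_\rho(R)^{1/2}\|h\|_{\msh}$ is correct and is precisely the Friz--Victoir proof: one represents increments of $h$ via the reproducing property, applies Cauchy--Schwarz in $\ch$, and then H\"older with exponents $\rho,\rho'$ on the resulting double sum, followed by $\ell^{\rho}$--$\ell^{\rho'}$ duality. So on the main point you recover exactly the cited argument.

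You are also right to flag the discrepancy between the range $\rho>(H+\tfrac12)^{-1}$ announced for the inclusion and the range $\rho\ge 1/(2H)$ for which the paper asserts $V_\rho(R)<\infty$ (Proposition~\ref{prop:fbm-rough-path}); for $H<\tfrac12$ the former interval is strictly larger, so the quantitative inequality alone does not deliver the full inclusion as stated. Your proposed fix via the Decreusefond--\"Ust\"unel identification $\msh=I_{0+}^{H+1/2}(L^2)$ and the resulting H\"older/variation regularity is the standard and cleanest route, and is how Friz--Victoir obtain the sharper embedding in the fractional Brownian case; the diagonal/off-diagonal splitting you sketch is a plausible alternative but is more laborious. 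In short: your proof of the displayed inequality is complete and matches the source; the extension of the inclusion to the full range $\rho>(H+\tfrac12)^{-1}$ genuinely requires the additional ingredient you identify, and invoking the explicit Cameron--Martin description is the correct way to supply it.
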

\begin{remark}
The above proposition shows that for fBm we have $\msh\subset \cac^{\rho-{\rm var}}$ for $\rho>(H+1/2)^{-1}$. Hence an integral of the form $\int h \, dB$ can be interpreted in the Young sense by means of $p$-variation techniques.
\end{remark}

%Some isometry arguments allow to define the Wiener integral $B(\phi)=\int_0^{1} \langle \phi_s, dB_s \rangle$ for any element $\phi\in\ch$, with the additional property $\EE[B(\phi_1)B(\phi_2)]=\langle \phi_1,\, \phi_2\rangle_{\ch}$ for any $\phi_1,\phi_2\in\ch$.
A $\mathcal{F}$-measurable real
valued random variable $F$ is then said to be cylindrical if it can be
written, for a given $n\ge 1$, as
\begin{equation*}
F=f\lp  B(\phi^1),\ldots,B(\phi^n)\rp=
f \Bigl( \int_0^{1} \langle \phi^1_s, dB_s \rangle ,\ldots,\int_0^{1}
\langle \phi^n_s, dB_s \rangle \Bigr)\;,
\end{equation*}
where $\phi^i \in \mathcal{H}$ and $f:\mathbb{R}^n \rightarrow
\mathbb{R}$ is a $C^{\infty}$ bounded function with bounded derivatives. The set of
cylindrical random variables is denoted $\mathcal{S}$.

The Malliavin derivative is defined as follows: for $F \in \mathcal{S}$, the derivative of $F$ is the $\mathbb{R}^d$ valued
stochastic process $(\mathbf{D}_t F )_{0 \leq t \leq 1}$ given by
\[
\mathbf{D}_t F=\sum_{i=1}^{n} \phi^i (t) \frac{\partial f}{\partial
x_i} \left( B(\phi^1),\ldots,B(\phi^n)  \right).
\]
More generally, we can introduce iterated derivatives. If $F \in
\mathcal{S}$, we set
\[
\mathbf{D}^k_{t_1,\ldots,t_k} F = \mathbf{D}_{t_1}
\ldots\mathbf{D}_{t_k} F.
\]
For any $p \geq 1$, it can be checked that the operator $\mathbf{D}^k$ is closable from
$\mathcal{S}$ into $\mathbf{L}^p(\oom;\mathcal{H}^{\otimes k})$. We denote by
$\mathbb{D}^{k,p}$ the closure of the class of
cylindrical random variables with respect to the norm
\[
\left\| F\right\| _{k,p}=\left( \mathbb{E}\left( F^{p}\right)
+\sum_{j=1}^k \mathbb{E}\left( \left\| \mathbf{D}^j F\right\|
_{\mathcal{H}^{\otimes j}}^{p}\right) \right) ^{\frac{1}{p}},
\]
and
\[
\mathbb{D}^{\infty}=\bigcap_{p \geq 1} \bigcap_{k
\geq 1} \mathbb{D}^{k,p}.
\]

%Estimates of Malliavin derivatives are crucial in order to get information about densities of random variables, and Malliavin matrices as well as non-degenerate random variables will feature importantly in the sequel:
\begin{definition}\label{non-deg}
Let $F=(F^1,\ldots , F^n)$ be a random vector whose components are in $\mathbb{D}^\infty$. Define the Malliavin matrix of $F$ by
$$\gamma_F=(\langle \mathbf{D}F^i, \mathbf{D}F^j\rangle_{\ch})_{1\leq i,j\leq n}.$$
Then $F$ is called  {\it non-degenerate} if $\gamma_F$ is invertible $a.s.$ and
$$(\det \gamma_F)^{-1}\in \cap_{p\geq1}L^p(\Omega).$$
\end{definition}
\noindent
It is a classical result that the law of a non-degenerate random vector $F=(F^1, \ldots , F^n)$ admits a smooth density with respect to the Lebesgue measure on $\mr^n$. Furthermore, the following integration by parts formula allows to get more quantitative estimates:

\begin{proposition}\label{th: IBP}
Let $F=(F^1,...,F^n)$ be a non-degenerate random vector whose components are in $\D^\infty$, and $\gamma_F$ the Malliavin matrix of $F$. Let $G\in\D^\infty$ and $\varphi$ be a function in the space $C_p^\infty(\mr^n)$. Then for any multi-index $\alpha\in\{1,2,...,n\}^k, k\geq 1$, there exists an element $H_\alpha\in\D^\infty$ such that
$$\me[\partial_\alpha \varphi(F)G]=\me[\varphi(F)H_\alpha].$$
Moreover, the elements $H_\alpha$ are recursively given by
\begin{align*}
&H_{(i)}=\sum_{j=1}^{d}\delta\left(G(\gamma_F^{-1})^{ij}\mathbf{D}F^j\right)\\
&H_\alpha=H_{(\alpha_k)}(H_{(\alpha_1,..., \alpha_{k-1})}),
\end{align*}
and for $1\leq p<q<\infty$ we have
$$\|H_\alpha\|_{L^p}\leq C_{p,q}\|\gamma_F^{-1}\mathbf{D}F\|^k_{k, 2^{k-1}r}\|G\|_{k,q},$$
where $\frac{1}{p}=\frac{1}{q}+\frac{1}{r}$.
\end{proposition}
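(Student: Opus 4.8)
The plan is to prove the case $k=1$ directly from the chain rule for $\mathbf{D}$ together with the duality between $\mathbf{D}$ and the divergence operator $\delta$, and then to obtain the general multi-index formula by induction on the length $k$, peeling off one derivative from the right at each step.

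For $k=1$ I would start from the chain rule $\mathbf{D}(\varphi(F))=\sum_{i=1}^n\partial_i\varphi(F)\,\mathbf{D}F^i$, legitimate here because $F\in(\mathbb{D}^\infty)^n$ has moments of all orders and $\varphi\in C_p^\infty(\mathbb{R}^n)$, so in particular $\varphi(F)\in\mathbb{D}^{1,2}$. Pairing with $\mathbf{D}F^j$ in $\mathcal{H}$ gives $\langle \mathbf{D}(\varphi(F)),\mathbf{D}F^j\rangle_{\mathcal{H}}=\sum_i\partial_i\varphi(F)\,(\gamma_F)^{ij}$; multiplying by $(\gamma_F^{-1})^{jk}$ and summing over $j$ (using $\gamma_F\gamma_F^{-1}=\mathrm{Id}$) produces the pointwise identity
\[
\partial_k\varphi(F)=\sum_{j=1}^n\langle \mathbf{D}(\varphi(F)),\,(\gamma_F^{-1})^{jk}\,\mathbf{D}F^j\rangle_{\mathcal{H}}.
\]
Multiplying by $G$, taking expectations, absorbing $G$ into the $\mathcal{H}$-pairing, and applying the duality $\mathbb{E}[\langle\mathbf{D}u,v\rangle_{\mathcal{H}}]=\mathbb{E}[u\,\delta(v)]$ (for $u\in\mathbb{D}^{1,2}$, $v\in\mathrm{Dom}\,\delta$) yields $\mathbb{E}[\partial_k\varphi(F)G]=\mathbb{E}[\varphi(F)H_{(k)}]$ with $H_{(k)}=\sum_j\delta(G(\gamma_F^{-1})^{jk}\mathbf{D}F^j)$, which matches the claimed formula once the symmetry of $\gamma_F^{-1}$ is used to reconcile the index convention. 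For this to make sense one must know $G(\gamma_F^{-1})^{jk}\mathbf{D}F^j\in\mathrm{Dom}\,\delta$, which reduces to $(\gamma_F^{-1})^{jk}\in\mathbb{D}^\infty$. Here I would first check that each entry $\langle\mathbf{D}F^i,\mathbf{D}F^j\rangle_{\mathcal{H}}$ of $\gamma_F$ lies in $\mathbb{D}^\infty$ (differentiate the pairing and use the closability of $\mathbf{D}^m$), hence $\det\gamma_F\in\mathbb{D}^\infty$; then, since $\det\gamma_F>0$ a.s.\ and $(\det\gamma_F)^{-1}\in\cap_p L^p$ by non-degeneracy (Definition \ref{non-deg}), approximating $x\mapsto 1/x$ on $(0,\infty)$ by smooth functions of controlled growth and invoking closability again gives $(\det\gamma_F)^{-1}\in\mathbb{D}^\infty$, and Cramer's rule finishes it.

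Granting this, $G(\gamma_F^{-1})^{jk}\mathbf{D}F^j\in\mathbb{D}^\infty(\mathcal{H})$ as a product of $\mathbb{D}^\infty$ objects, and the Meyer inequalities for $\delta$ (the continuity of $\delta:\mathbb{D}^{m+1,p}(\mathcal{H})\to\mathbb{D}^{m,p}$ for all $m\ge0$, $p>1$) give $H_{(k)}\in\mathbb{D}^\infty$ and, after distributing norms by Hölder among the three factors, the bound $\|H_{(k)}\|_{L^p}\le C_{p,q}\|\gamma_F^{-1}\mathbf{D}F\|_{1,r}\|G\|_{1,q}$ for $\tfrac1p=\tfrac1q+\tfrac1r$. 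For a general multi-index $\alpha=(\alpha_1,\dots,\alpha_k)$ I would then induct on $k$: writing $\partial_\alpha=\partial_{(\alpha_1,\dots,\alpha_{k-1})}\partial_{\alpha_k}$, apply the inductive hypothesis to the function $\partial_{\alpha_k}\varphi$ (with weight $G$) to shift $\partial_{(\alpha_1,\dots,\alpha_{k-1})}$ onto $G$, producing the new weight $H_{(\alpha_1,\dots,\alpha_{k-1})}(G)\in\mathbb{D}^\infty$, and then apply the $k=1$ identity to the remaining $\partial_{\alpha_k}$. This gives exactly $\mathbb{E}[\partial_\alpha\varphi(F)G]=\mathbb{E}[\varphi(F)\,H_{(\alpha_k)}(H_{(\alpha_1,\dots,\alpha_{k-1})}(G))]$, matching the stated recursion, with $H_\alpha\in\mathbb{D}^\infty$ since each operation preserves $\mathbb{D}^\infty$. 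The norm estimate then follows by iterating the $k=1$ bound: the Sobolev orders climb to $k$ (explaining the $\|G\|_{k,q}$ on the right), the factor $\gamma_F^{-1}\mathbf{D}F$ appears $k$ times, and distributing the $L^p$ integrability by Hölder at each of the $k$ steps lets one take the common integrability exponent of those $k$ factors to be $2^{k-1}r$.

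I expect the conceptual content, chain rule plus duality, to be routine, and the real work to lie in two places. The first is the verification that $(\gamma_F^{-1})^{ij}\in\mathbb{D}^\infty$ with quantitative Sobolev bounds in terms of $(\det\gamma_F)^{-1}$ and the entries of $\gamma_F$; this is the one point where non-degeneracy is genuinely used, and it rests on the lemma that an a.s.\ positive element of $\mathbb{D}^\infty$ whose inverse lies in every $L^p$ has its inverse in $\mathbb{D}^\infty$. The second is the careful propagation of the Hölder exponents and Sobolev orders through the induction so as to land on precisely $\|\gamma_F^{-1}\mathbf{D}F\|_{k,2^{k-1}r}^k\|G\|_{k,q}$; this, together with the continuity of $\delta$ on the spaces $\mathbb{D}^{m,p}(\mathcal{H})$ (Meyer's inequalities), are the external inputs the argument relies on.
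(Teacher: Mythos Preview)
Your argument is the standard one and is correct: chain rule plus $\mathbf{D}$--$\delta$ duality for $k=1$, the approximation lemma showing $(\gamma_F^{-1})^{ij}\in\mathbb{D}^\infty$ under non-degeneracy, and then induction on the length of $\alpha$ with Meyer's inequalities to propagate the Sobolev/H\"older exponents. This is exactly the proof one finds in Nualart's book \cite{Nu06}, which the paper cites for all of its Malliavin calculus background.

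The paper itself, however, gives no proof of this proposition at all: it is stated as a known result from \cite{Nu06} and used as a black box (only the consequence in Remark~\ref{est H} is invoked later, in the proofs of \eqref{main claim 2} and \eqref{main claim 3}). So there is no ``paper's own proof'' to compare against; your write-up simply supplies the standard argument that the authors chose to omit.
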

\begin{remark}\label{est H}
By the estimates for $H_\alpha$ above, one can conclude that there exist constants $\beta, \gamma>1$ and integers $m, r$ such that
\begin{align*}
\|H_\alpha\|_{L^p}\leq C_{p,q}\|\det\gamma^{-1}_F\|^m_{L^\beta}\|\mathbf{D}F\|_{k,\gamma}^r\|G\|_{k,q}.
\end{align*}
%Indeed, this is a formula after (2.32) given in \cite[Chapter 2]{Nu06}
\end{remark}
\begin{remark}
In what follows, we use $H_\alpha(F,G)$ to emphasize its dependence on $F$ and $G$.
\end{remark}

%\begin{proposition}\label{prop:imbed-bar-H}
%Let $B$ be a fBm with Hurst parameter $H\in(1/4,1/2)$. Then one has $\bch\subset \cac^{\rho-{\rm var}}$ for $\rho>(H+1/2)^{-1}$. Furthermore, the following quantitative bound holds:
%\begin{equation*}
%|h|_{\bch} \ge \frac{|h|_{\rho-{\rm var}}}{(V_\rho(R))^{1/2}}.
%\end{equation*}
%\end{proposition}

\subsection{Differential equations driven by fractional Brownian motions}
Let $B$ be a d-dimensional fractional Brownian motion with Hurst parameter $H>\frac{1}{4}$. Fix a small parameter $\eps\in(0,1]$, and consider the solution $X_t^\eps$ to the stochastic differential equation
\begin{align}\label{equ: SDE}
X_t^\eps=x+\eps\sum_{i=1}^d\int_0^tV_i(X_s^\eps)dB_s^i,
\end{align}
where the vector fields $V_1,\ldots,V_d$ are $C^\infty$-bounded vector fields on $\R^n$. 

%\begin{hypothesis} \label{hyp:elliptic}
%The vector fields $V_1,\ldots ,V_d$ of equation \eqref{equ: SDE} is uniformly elliptic, that is
%\begin{equation}\label{eq:hyp-elliptic}
% v^{*} V(x) V^{*}(x) v  \geq \lambda \vert v \vert^2, \qquad \text{for all } v,x \in \R^n,
%\end{equation}
%where we have set $V=(V_j^i)_{i=1,\ldots ,n; j=1,\ldots d}$ and where $\lambda$ designates  a positive constant.
%\end{hypothesis}

Proposition \ref{prop:fbm-rough-path} ensures the existence of a lift of $B$ as a geometrical rough path. The general rough paths theory (see e.g.~\cite{FV-bk,Gu}) allows thus to state  the following proposition:

\begin{proposition}\label{prop:moments-sdes-rough}
Consider equation (\ref{equ: SDE}) driven by a $d$-dimensional fBm $B$ with Hurst parameter $H>\frac{1}{4}$, and assume that the vector fields $V_i$s are $C^\infty$-bounded. Then

\smallskip

\noindent
\emph{(i)}
For each $\eps\in(0,1]$, equation (\ref{equ: SDE}) admits a unique finite $p$-var continuous solution $X^\eps$ in the rough paths sense, for any $p> \frac{1}{H}$.

\smallskip

\noindent
\emph{(ii)}
For any $\lambda>0$ and $\delta<\frac{1}{p}$ we have
\begin{equation}\label{eq:exp-delta-moments}
\me\left[\exp\lambda\left(\sup_{t\in [0,1], \epsilon \in (0,1]}|X^\eps_t|^\delta\right)\right]<\infty.
\end{equation}
\end{proposition}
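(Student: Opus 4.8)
The plan is to prove Proposition \ref{prop:moments-sdes-rough} by combining the universal estimates from rough paths theory for the solution map (the Itô–Lyons map) with the Gaussian integrability of the $p$-variation norm of the lifted fractional Brownian motion $\mathbf{B}$. Part (i) is essentially a citation: Proposition \ref{prop:fbm-rough-path} provides a geometric $p$-rough path lift $\mathbf{B}$ for any $p>1/H$, and the universal existence-and-uniqueness theorem for rough differential equations with $C^\infty$-bounded (in particular $\mathrm{Lip}^{\gamma}$ for $\gamma>p$) vector fields gives a unique solution $X^\eps$ of finite $p$-variation; see \cite{FV-bk,LQ,Gu}. The scaling by $\eps\in(0,1]$ only rescales the driving signal and does not affect well-posedness.

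For part (ii), the key structural input is the deterministic estimate, valid for RDEs driven by a geometric $p$-rough path $\mathbf{x}$ with $C^\infty$-bounded vector fields, of the form
\begin{align*}
\sup_{t\in[0,1]}|X^\eps_t - x| \;\le\; C\big(1 + \eps\,\|\mathbf{B}\|_{p\text{-var};[0,1]}\big)^{p}
\end{align*}
for a constant $C$ depending only on $p$ and on bounds for the $V_i$'s (this is the standard a priori bound for the Itô–Lyons map; see \cite[Chapter 10]{FV-bk}). Since $\eps\le 1$, the right-hand side is bounded above, uniformly in $\eps\in(0,1]$, by $C(1+\|\mathbf{B}\|_{p\text{-var};[0,1]})^{p}$. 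Hence for any $\delta<1/p$,
\begin{align*}
\sup_{t\in[0,1],\,\eps\in(0,1]}|X^\eps_t|^{\delta}
\;\le\; \big(|x| + C(1+\|\mathbf{B}\|_{p\text{-var};[0,1]})^{p}\big)^{\delta}
\;\le\; C'\big(1+\|\mathbf{B}\|_{p\text{-var};[0,1]}\big)^{p\delta},
\end{align*}
and $p\delta<1$. It therefore suffices to establish the Fernique-type bound $\me\big[\exp(\lambda\|\mathbf{B}\|_{p\text{-var};[0,1]}^{q})\big]<\infty$ for every $q<2$ and every $\lambda>0$; applying this with $q$ chosen so that $p\delta<q<2$ then gives the claim after absorbing the polynomial prefactor.

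The Gaussian integrability of the homogeneous $p$-variation norm of $\mathbf{B}$ is itself a known result: by Proposition \ref{prop:fbm-rough-path}, $\mathbf{B}$ lives in $\cC^{p\text{-var}}$ for $p>1/H$, and the Cameron–Martin space $\msh$ embeds continuously into $\cac^{\rho\text{-var}}$ with $\rho>(H+1/2)^{-1}<1/H$ (Proposition \ref{prop:imbed-bar-H}); the general integrability theorem for Gaussian rough paths (\cite[Chapter 15]{FV-bk}, building on Borell's inequality and the fact that $\|\mathbf{B}\|_{p\text{-var}}$ is a measurable norm dominated, modulo a Cameron–Martin complement, by the driving Gaussian) yields $\me\exp(\lambda\|\mathbf{B}\|_{p\text{-var};[0,1]}^{2})<\infty$ for $\lambda$ small, and in particular $\me\exp(\lambda\|\mathbf{B}\|_{p\text{-var};[0,1]}^{q})<\infty$ for all $q<2$ and all $\lambda>0$. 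The only point requiring a little care — and the main obstacle if one wanted a fully self-contained argument — is the uniformity in $\eps$: one must make sure the constant $C$ in the a priori bound is genuinely independent of $\eps$, which is automatic because $\eps\mathbf{B}$ (more precisely the dilation of $\mathbf{B}$ by $\eps$) has $p$-variation norm at most $\|\mathbf{B}\|_{p\text{-var};[0,1]}$ for $\eps\le 1$, the dilation being $1$-homogeneous on each tensor level in the Carnot–Carathéodory norm. Assembling these pieces gives \eqref{eq:exp-delta-moments}.
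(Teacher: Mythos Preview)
Your proposal is correct and is precisely the argument the paper has in mind: the paper does not give a self-contained proof but simply cites \cite{FV-bk,Gu}, and what you have written is exactly the unpacking of those references --- the universal limit theorem for RDEs with $\mathrm{Lip}^\gamma$ vector fields for (i), and the deterministic a priori estimate $\sup_t|X^\eps_t-x|\le C(1+\eps\|\mathbf{B}\|_{p\text{-var}})^p$ combined with the Fernique-type Gaussian integrability $\me\exp(\eta\|\mathbf{B}\|_{p\text{-var}}^2)<\infty$ for (ii). The uniformity in $\eps$ via the homogeneity of the Carnot--Carath\'eodory norm under dilation is handled correctly.
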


Once equation (\ref{equ: SDE}) is solved, the vector $X_t^\eps$ is a typical example of  random variable which can be differentiated in the Malliavin sense. We shall express this Malliavin derivative in terms of the Jacobian $\bj^\eps$ of the equation, which is defined by the relation $$\bj_{t}^{\eps, ij}=\partial_{x_j}X_t^{\eps,i}.$$ Setting $DV_{j}$ for the Jacobian of $V_{j}$ seen as a function from $\R^{n}$ to $\R^{n}$, let us recall that $\bj^\eps$ is the unique solution to the linear equation
\begin{equation}\label{eq:jacobian}
\bj_{t}^\eps = \id_{n} +
\eps\sum_{j=1}^d \int_0^t DV_j (X^{\eps}_s) \, \bj_{s}^\eps \, dB^j_s,
\end{equation}
and that the following results hold true (see \cite{CF} and \cite{NS}  for further details):
\begin{proposition}\label{prop:deriv-sde}
Let $X^\eps$ be the solution to equation (\ref{equ: SDE}) and suppose the $V_i$'s are $C^\infty$-bounded. Then
for every $i=1,\ldots,n$, $t>0$, and $x \in \mathbb{R}^n$, we have $X_t^{\eps,i} \in
\mathbb{D}^{\infty}$ and
\begin{equation*}
\mathbf{D}^j_s X_t^{\eps}= \mathbf{J}^\eps_{st} V_j (X^\eps_s) , \quad j=1,\ldots,d, \quad
0\leq s \leq t,
\end{equation*}
where $\mathbf{D}^j_s X^{\eps,i}_t $ is the $j$-th component of
$\mathbf{D}_s X^{\eps,i}_t$, $\mathbf{J}_{t}^\eps=\partial_{x} X^\eps_t$ and $\bj_{st}^\eps=\bj_{t}^\eps(\bj_{s}^\eps)^{-1}$.

\end{proposition}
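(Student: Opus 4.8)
The plan is to prove the statement in three steps: first that $X_t^{\eps,i}\in\mathbb{D}^\infty$, then that the Malliavin derivative $\mathbf{D}_sX_t^\eps$ solves the rough differential equation obtained by linearizing \eqref{equ: SDE} while perturbing the driver, and finally that this linear equation is solved by the variation-of-constants (Duhamel) formula involving the Jacobian $\mathbf{J}^\eps$, which produces the asserted expression.

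For the first step I would argue by smooth approximation combined with the closability of $\mathbf{D}^k$. Let $B^m$ be the dyadic piecewise-linear interpolations of $B$ (or a mollification); by Proposition~\ref{prop:fbm-rough-path} and standard rough-path estimates the lifts $\mathbf{B}^m=S_{\lfloor p\rfloor}(B^m)$ converge to $\mathbf{B}$ in $p$-variation, almost surely and in $L^q(\Omega)$ for every $q$. Let $X^{\eps,m}$ be the classical solution of \eqref{equ: SDE} driven by $B^m$. Since $X^{\eps,m}_t$ is a smooth function of finitely many increments of $B$, it is a cylindrical functional and hence lies in $\mathbb{D}^\infty$, and the classical chain rule together with variation of constants for the linearized ODE gives
\begin{equation*}
\mathbf{D}^j_s X^{\eps,m}_t=\eps\,\mathbf{J}^{\eps,m}_{st}\,V_j(X^{\eps,m}_s),\qquad 0\le s\le t,
\end{equation*}
where $\mathbf{J}^{\eps,m}$ is the Jacobian of the $B^m$-driven flow and $\mathbf{J}^{\eps,m}_{st}=\mathbf{J}^{\eps,m}_t(\mathbf{J}^{\eps,m}_s)^{-1}$. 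Applying the continuity of the It\^o--Lyons map to the coupled system formed by \eqref{equ: SDE}, \eqref{eq:jacobian} and the equation for $(\mathbf{J}^{\eps,m})^{-1}$, together with the uniform moment bounds of Proposition~\ref{prop:moments-sdes-rough} (and the corresponding all-order integrability of the Jacobian and its inverse), one shows that the iterated derivatives $\mathbf{D}^kX^{\eps,m}_t$ are bounded in $L^p(\Omega;\mathcal{H}^{\otimes k})$ uniformly in $m$, for every $k$ and $p$. Closability of $\mathbf{D}^k$ then yields $X_t^{\eps,i}\in\mathbb{D}^\infty$ and identifies $\lim_m\mathbf{D}^kX^{\eps,m}_t=\mathbf{D}^kX^\eps_t$.

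For the remaining steps I would pass to the limit in the displayed identity: since $\mathbf{B}^m\to\mathbf{B}$ forces $X^{\eps,m}\to X^\eps$ and $\mathbf{J}^{\eps,m}\to\mathbf{J}^\eps$ in $p$-variation, and the right-hand side is continuous in these data, one gets $\mathbf{D}^j_sX_t^\eps=\eps\,\mathbf{J}^\eps_{st}V_j(X^\eps_s)$, which is the claimed formula (the factor $\eps$ being inherited from the normalization of \eqref{equ: SDE}). Equivalently, one may bypass the explicit ODE computation by invoking Lemma~\ref{th: rough-couple-conti}: the map $h\mapsto X_t^\eps(T_h\mathbf{B})$ is Fr\'echet differentiable on the Cameron--Martin space, its derivative in the direction $\crr\phi$ equals $\langle\mathbf{D}X_t^\eps,\phi\rangle_{\mathcal{H}}$, and differentiating the rough differential equation shows that this directional derivative $Y$ solves $dY_t=\eps\,DV_j(X^\eps_t)Y_t\,dB^j_t+\eps\,V_j(X^\eps_t)\,d(\crr\phi)^j_t$ with $Y_0=0$; the Duhamel representation $Y_t=\int_0^t\eps\,\mathbf{J}^\eps_{st}V_j(X^\eps_s)\,d(\crr\phi)^j_s$ then matches the pairing and identifies $\mathbf{D}_sX_t^\eps$.

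I expect the main obstacle to be the uniform-in-$m$ control of the higher Malliavin derivatives in the Hilbert spaces $\mathcal{H}^{\otimes k}$. When $H<\tfrac12$ these are spaces of distributions, so the relevant norms cannot be estimated naively; one must route the estimates through the embeddings $C^\gamma\subset\mathcal{H}$ for $\gamma>\tfrac12-H$ and the $2$-dimensional $\rho$-variation description of the covariance $R$, combined with sharp a priori rough-path bounds for the iterated variation equations and for $\mathbf{J}^\eps$ and $(\mathbf{J}^\eps)^{-1}$, whose integrability to all orders rests on the greedy-partition/tail estimates underlying Proposition~\ref{prop:moments-sdes-rough}. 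Once these uniform bounds and the joint rough-path convergence of $(\mathbf{B}^m,X^{\eps,m},\mathbf{J}^{\eps,m})$ are in hand, the closability argument and the passage to the limit in the derivative formula are routine.
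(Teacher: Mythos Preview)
The paper does not supply its own proof of this proposition; it is stated as a known result with the parenthetical ``see \cite{CF} and \cite{NS} for further details.'' Your sketch is correct and is precisely the argument used in those references: approximate $B$ by smooth paths, compute the derivative for the approximating ODEs via variation of constants, and pass to the limit using the continuity of the It\^o--Lyons map together with uniform moment bounds and closability of $\mathbf{D}^k$. The alternative you mention, differentiating $h\mapsto X^\eps_t(T_h\mathbf{B})$ directly on the Cameron--Martin space, is also standard and appears in \cite{CF}.

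One small remark: the proposition as printed omits the factor $\eps$ in front of $\mathbf{J}^\eps_{st}V_j(X^\eps_s)$, but you correctly carry it through, and the paper itself uses the formula with the $\eps$ later (e.g.\ in the proof of Lemma~\ref{th: Malliavin est}). So there is no discrepancy in substance. Your identification of the main technical point---controlling $\|\mathbf{D}^kX^{\eps,m}_t\|_{\mathcal{H}^{\otimes k}}$ uniformly in $m$ when $H<\tfrac12$ via the embedding of finite $q$-variation paths into $\mathcal{H}$ and the integrability of the Jacobian---is exactly what the cited works (and Proposition~\ref{th:express DX} in this paper) rely on.
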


\smallskip

Let us now quote the recent result \cite{CLL}, which gives a useful estimate for moments of the Jacobian of rough differential equations driven by Gaussian processes.
\begin{proposition}\label{prop:moments-jacobian}
Consider a  fractional Brownian motion $B$ with Hurst parameter $H>\frac{1}{4}$ and $p>\frac{1}{H}$. Then for any $\eta\ge 1$, there exists a finite constant $c_\eta$ such that the Jacobian $\bj^\eps$ defined at Proposition \ref{prop:deriv-sde} satisfies:
\begin{equation}\label{eq:moments-J-pvar}
\EE\lc  \sup_{\eps\in[0,1]}\Vert \bj^\eps \Vert^{\eta}_{p-{\rm var}; [0,1]} \rc = c_\eta.
\end{equation}
\end{proposition}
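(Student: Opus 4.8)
The plan is to follow the greedy-times strategy of Cass--Litterer--Lyons \cite{CLL}: bound $\|\bj^\eps\|_{p-{\rm var};[0,1]}$ by an exponential of the number of subintervals needed to chop $[0,1]$ so that the driving rough path has small $p$-variation on each piece, and then control the moments of that number via the complementary Young regularity of the Cameron--Martin space. A preliminary remark makes everything uniform in $\eps$. Rewriting \eqref{equ: SDE} as $dX^\eps=\sum_i V_i(X^\eps)\,d(\eps B)^i$, the pair $(X^\eps,\bj^\eps)$ is the solution of a \emph{single}, $\eps$-independent rough differential equation --- with $C^\infty$-bounded vector fields for the $X$-component and the linear vector fields of \eqref{eq:jacobian} for the $\bj$-component --- driven by $\delta_\eps\mathbf B$, the natural lift of $\eps B$. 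Since the Carnot--Carath\'eodory norm is homogeneous, $\|\delta_\eps\mathbf B\|_{p-{\rm var};[s,t]}=\eps\,\|\mathbf B\|_{p-{\rm var};[s,t]}\le\|\mathbf B\|_{p-{\rm var};[s,t]}$ for all $s<t$ and all $\eps\in[0,1]$; hence a subdivision of $[0,1]$ that is ``good'' for $\mathbf B$ is good for every $\delta_\eps\mathbf B$ at once.

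Next, fix a small $\alpha>0$ and let $0=\tau_0<\tau_1<\cdots<\tau_N=1$ be the greedy sequence defined recursively by $\tau_{k+1}=\inf\{t>\tau_k:\|\mathbf B\|_{p-{\rm var};[\tau_k,t]}\ge\alpha\}\wedge1$, and write $N=N_\alpha(\mathbf B)$ for the number of intervals it produces. On each $[\tau_k,\tau_{k+1}]$ one has $\|\delta_\eps\mathbf B\|_{p-{\rm var}}\le\alpha$, so the universal local estimate for RDEs with $C^\infty$-bounded coefficients (see e.g.~\cite{FV-bk}) gives $\|X^\eps\|_{p-{\rm var};[\tau_k,\tau_{k+1}]}\le C\alpha$ with $C$ depending only on $p$ and the $V_i$'s; inserting this into the linear equation \eqref{eq:jacobian} and applying the corresponding estimate for linear rough differential equations yields an operator-norm bound $|\bj^\eps_{\tau_k,\tau_{k+1}}|\le M$ on each interval, with $M=M(\alpha,p,\{V_i\})$ independent of $k$ and of $\eps$. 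Chaining over the $N$ intervals via the multiplicativity of the Jacobian and the standard $p$-variation concatenation estimate, one gets $\|\bj^\eps\|_{p-{\rm var};[0,1]}\le\Phi\big(N_\alpha(\mathbf B)\big)$ for some $\Phi$ of at most exponential growth, uniformly in $\eps\in[0,1]$. (When $\tfrac14<H<\tfrac12$, one first replaces $p$ by a value close to $1/H$ --- legitimate because $\|\cdot\|_{p-{\rm var}}$ is nonincreasing in $p$ and $\mathbf B$ still lifts to a geometric $p$-rough path; for $H\ge\tfrac12$ no such replacement is needed.)

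It remains to prove that $\me\big[\exp(\theta N_\alpha(\mathbf B))\big]<\infty$ for every $\theta>0$, which is the heart of the matter and the only place the Gaussian structure enters. By Proposition \ref{prop:imbed-bar-H}, with $\rho$ chosen slightly larger than $(H+1/2)^{-1}$, the Cameron--Martin space $\msh$ embeds continuously into $\cC^{\rho-{\rm var}}$; since $p$ has been taken close to $1/H$ and $H>1/4$, we have simultaneously $1/p+1/\rho>1$ and $\rho<2$, i.e.~$\msh$ has complementary Young regularity relative to $\mathbf B$. Combining the quantitative translation bound of Lemma \ref{th: rough-couple-conti} with this regularity yields a deterministic estimate of the form $N_\alpha(T_h\mathbf B)\le c_1\big(N_{\alpha/2}(\mathbf B)+\|h\|_{\rho-{\rm var};[0,1]}^{\rho}+1\big)$ for Cameron--Martin shifts $h$; feeding this into a Cameron--Martin/Borell concentration argument, exactly as in \cite{CLL}, gives a Weibull tail $\PP\big(N_\alpha(\mathbf B)\ge n\big)\le\exp\big(-c_2\,n^{2/\rho}\big)$. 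Since $2/\rho>1$, all exponential moments of $N_\alpha(\mathbf B)$ are finite, whence $\me\big[\sup_{\eps\in[0,1]}\|\bj^\eps\|^{\eta}_{p-{\rm var};[0,1]}\big]\le\me\big[\Phi(N_\alpha(\mathbf B))^{\eta}\big]<\infty$.

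The deterministic linear-RDE estimate in the second step is routine once the greedy construction is set up, and the reduction to a single equation driven by the dilated path $\delta_\eps\mathbf B$ hands us the uniformity in $\eps$ essentially for free. The genuine obstacle is the sub-exponential tail of $N_\alpha(\mathbf B)$: it requires both the sharp description of Cameron--Martin regularity in Proposition \ref{prop:imbed-bar-H} and a Gaussian concentration input, and it is precisely here that the hypothesis $H>1/4$ is indispensable --- below that threshold $\msh$ no longer has the complementary Young regularity needed to run the argument.
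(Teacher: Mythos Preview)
Your sketch is correct and is precisely the Cass--Litterer--Lyons argument that the paper invokes: the paper does not prove this proposition at all but simply quotes \cite{CLL} as a black box, whereas you have unpacked the greedy-times / complementary-Young-regularity mechanism behind it. The uniformity in $\eps$ via the dilation $\delta_\eps\mathbf B$ is the one point not explicit in \cite{CLL} as stated there, and your observation that $\|\delta_\eps\mathbf B\|_{p\text{-}\mathrm{var}}\le\|\mathbf B\|_{p\text{-}\mathrm{var}}$ handles it cleanly.
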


%Let $\Phi: \mathcal{\bar{H}}\to \mathcal{C}[0,1]$ be given by solving the ordinary deferential equation
%$$\Phi_t(h)=x+\int_0^tV_0(\Phi_s(h))ds+\eps\sum_{i=1}^d\int_0^tV_i(\Phi_s(h))dh_s^i.$$
%Clearly, $X_t^\eps=\Phi_t(\eps B)$.

In the sequel, we also need the following restatement of~\cite[Proposition 4.4]{H-P}.
\begin{proposition}\label{th:express DX}
\label{induction}
Fix  $k\in
\mathbb{N}
$ and let $\left\{ h_{1},\ldots,h_{k}\right\} $ be any family of elements in $\msh$. Then the directional derivative $\mathbf{D}_{h_{1}}\ldots \mathbf{D}_{h_{k}}X^\eps_t $ exists for any $t\in \left[
0,1\right] .$ Moreover, there exists a collection of finite indexing sets
\begin{equation*}
\left\{ \mathbf{K}_{\left( i_{1},\ldots ,i_{k}\right) }:\left(
i_{1},\ldots ,i_{k}\right) \in \left\{ 1,\ldots ,d\right\} ^{k}\right\},
\end{equation*}%
such that for every $j\in \left\{ 1,..,n\right\} $ we have
\begin{eqnarray}
&&\mathbf{D}_{h_{1}}\ldots \mathbf{D}_{h_{k}}X_{t}^{\eps,j}  \label{high order rep} \\
&=&\sum_{i_{1},\ldots ,i_{k}=1}^{d}\sum_{m\in \mathbf{K}_{\left(
i_{1},\ldots ,i_{k}\right) }}\int_{0<t_{1}<\cdots <t_{k}<t}f_{1}^{\eps,m}\left(
t_{1}\right) \ldots f_{k}^{\eps,m}\left( t_{m}\right) f_{k+1}^{\eps,m}\left( t\right)
d h_1^{i_1}(t_{1}) \ldots d h_k^{i_k}(t_{k}) ,  \notag
\end{eqnarray}%
for some functions $f_{\ell}^{\eps,m}$ which are in $\cac^{p-{\rm var}}\left( \left[
0,1\right] ,%
%TCIMACRO{\U{211d} }%
%BeginExpansion
\mathbb{R}
%EndExpansion
\right) $ for every $p>\frac{1}{H}$, $\ell$ and $m,$ i.e.%
\begin{equation*}
\cup _{\left( i_{1},\ldots ,i_{k}\right) \in \left\{ 1,\ldots ,d\right\} ^{k}}\cup
_{m\in \mathbf{K}_{\left( i_{1},\ldots ,i_{k}\right) }}\left\{
f_{\ell}^{\eps,m}:\ell=1,..,k+1\right\} \subset \cac^{p-{\rm var}}\left( \left[ 0,1\right]
,%
%TCIMACRO{\U{211d} }%
%BeginExpansion
\mathbb{R}
%EndExpansion
\right) .
\end{equation*}%
Furthermore, there exist constants $C>0$ and $\alpha\geq1$, which depend only on $m$
such that for all $t \in (0,1]$
\begin{equation}\label{eq:high-order-bound}
\left\Vert f_{\ell}^{\eps,m}\right\Vert _{p-{\rm var};\left[ 0,t\right] }\leq C\left\Vert M^\eps \right\Vert _{p-\rm{var};\left[ 0,t\right] } ^{\alpha},
\end{equation}
for every $\ell=1,\ldots ,k+1$, every $m\in \mathbf{K}_{\left(
i_{1},\ldots ,i_{k}\right) }$ and every $\left( i_{1},\ldots ,i_{k}\right) \in
\left\{ 1,\ldots ,d\right\} ^{k}$, where we have set $M^\eps=(X^\eps,\mathbf{J^\eps},(\mathbf{J}^\eps)^{-1})$.
\end{proposition}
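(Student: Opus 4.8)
\emph{Plan.} The plan is to argue by induction on $k$, peeling off one directional derivative at a time, and to carry along the enlarged process $M^\eps=(X^\eps,\mathbf{J}^\eps,(\mathbf{J}^\eps)^{-1})$. The structural fact that makes the induction close is that each block of $M^\eps$ solves a rough differential equation driven by $\eps\mathbf{B}$ whose coefficients are polynomial in $\mathbf{J}^\eps$, $(\mathbf{J}^\eps)^{-1}$ and in the $C^\infty$-bounded maps $V_i,DV_i,\dots$ evaluated along $X^\eps$, and that the linearization of such an equation — which is what one meets when differentiating in a Cameron--Martin direction — always has a fundamental solution of the shape $r\mapsto\mathbf{J}^\eps_r(\mathbf{J}^\eps_s)^{-1}$ (up to transposition and inversion), that is, a product of components of $M^\eps$ evaluated at the two relevant times. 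Since $\msh\hookrightarrow\cac^{q-{\rm var}}$ for some $q<2$ by Proposition \ref{prop:imbed-bar-H}, every integral against an $h_j$ below is a well-defined Young integral, the integrals against $\mathbf{B}$ are rough integrals, and both operations are continuous in $p$-variation.

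\emph{Base case and a differentiation rule.} For $k=1$, Proposition \ref{prop:deriv-sde} gives $\mathbf{D}^j_sX^\eps_t=\mathbf{J}^\eps_t(\mathbf{J}^\eps_s)^{-1}V_j(X^\eps_s)$ for $s\le t$ and $0$ for $s>t$, whence
\[
\mathbf{D}_{h_1}X^\eps_t=\sum_{i_1=1}^d\int_0^t\mathbf{J}^\eps_t\,(\mathbf{J}^\eps_{t_1})^{-1}V_{i_1}(X^\eps_{t_1})\,dh_1^{i_1}(t_1),
\]
which is \eqref{high order rep} at level $1$ with $f_1^{\eps,m}(t_1)=(\mathbf{J}^\eps_{t_1})^{-1}V_{i_1}(X^\eps_{t_1})$, $f_2^{\eps,m}(t)=\mathbf{J}^\eps_t$. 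More generally, differentiating the (linear) equations for the blocks of $M^\eps$ and solving by variation of constants yields the rule: if $g$ is a $\cac^{p-{\rm var}}$-path built from $M^\eps$ by composition with $C^\infty$-bounded maps, pointwise products, and Young or rough integrations, then $\mathbf{D}_{h_1}g(r)=\sum_{i_1}\int_0^r\phi^{i_1}(r,t_1)\,dh_1^{i_1}(t_1)$, where $\phi^{i_1}(r,t_1)$ is a finite sum of terms, each a product of a factor built from $M^\eps$ at time $r$ and a factor built from $M^\eps$ at time $t_1$, possibly separated by an intermediate factor $\int_{t_1}^r(\cdots)\,d\mathbf{B}$. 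Using $\int_{t_1}^r=\int_0^r-\int_0^{t_1}$, each such intermediate factor becomes a value at $r$ minus a value at $t_1$, so after expansion $\phi^{i_1}(r,t_1)=\sum a(t_1)\,b(r)+\sum c(t_1)$ with $a,b,c\in\cac^{p-{\rm var}}$ built from $M^\eps$; the standard $p$-variation calculus (products, composition with $C^\infty$-bounded maps, continuity of Young and rough integration) gives $\|a\|_{p-{\rm var};[0,t]},\|b\|_{p-{\rm var};[0,t]},\|c\|_{p-{\rm var};[0,t]}\le C\|M^\eps\|_{p-{\rm var};[0,t]}^\alpha$.

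\emph{Inductive step.} Suppose \eqref{high order rep}--\eqref{eq:high-order-bound} hold for $\mathbf{D}_{h_2}\cdots\mathbf{D}_{h_k}X^\eps_\cdot$, a finite sum of integrals $\int_{0<t_2<\cdots<t_k<t}g_1^{\eps,m}(t_2)\cdots g_{k-1}^{\eps,m}(t_k)\,g_k^{\eps,m}(t)\,dh_2^{i_2}(t_2)\cdots dh_k^{i_k}(t_k)$ whose coefficients are built from $M^\eps$ as in the rule above and bounded by $C\|M^\eps\|_{p-{\rm var};[0,t]}^\alpha$ in $p$-variation. Apply $\mathbf{D}_{h_1}$; since $h_2,\dots,h_k$ are deterministic, only the $g_\ell^{\eps,m}$ are differentiated, and the rule replaces one $g_\ell^{\eps,m}(r)$ at a time by $\sum_{i_1}\int_0^r\big(\sum a(t_1)b(r)+\sum c(t_1)\big)dh_1^{i_1}(t_1)$. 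Pulling the $r$-factors out of the inner integral and applying Fubini to merge and re-order the $k$ time variables — splitting the region according to the position of $t_1$ among $t_2<\cdots<t_k$ — rewrites each term as a finite sum of $k$-fold iterated integrals over $\{0<s_1<\cdots<s_k<t\}$, carrying one differential $dh_j^{i_j}$ at each node (each $j$ once, in an order depending on the term), single-variable coefficients at the nodes, and one coefficient evaluated at the endpoint $t$. Collecting the pieces and enlarging the index sets $\mathbf{K}$ accordingly gives \eqref{high order rep} at level $k$, and the $p$-variation estimates accumulated along the way give \eqref{eq:high-order-bound}, with constants depending only on the combinatorial type $m$ (hence on $k$ and on finitely many of the $\|D^jV_i\|_\infty$); the whole $\eps$-dependence is carried by $M^\eps$, which is why no power of $\eps$ appears. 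Finiteness of $\|M^\eps\|_{p-{\rm var};[0,1]}$, hence of each $\|f_\ell^{\eps,m}\|_{p-{\rm var};[0,1]}$, follows from Proposition \ref{prop:moments-sdes-rough} and the linear equations for $\mathbf{J}^\eps,(\mathbf{J}^\eps)^{-1}$, while Proposition \ref{prop:moments-jacobian} provides the moment bounds used elsewhere.

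\emph{Main obstacle.} The substance of the proof is the bookkeeping of the inductive step: checking that $\mathbf{D}_{h_1}$ keeps a path built from $M^\eps$ inside the same structural class — which rests on the resolvents entering every variation-of-constants step being products of components of $M^\eps$ at two times — and then controlling the combinatorial growth produced by the Fubini re-ordering: keeping the family of terms finite, recording which coefficient sits at which node (and that the top one is evaluated at $t$), and absorbing the nested $\int_{t_1}^r(\cdots)\,d\mathbf{B}$ factors by additivity so that every coefficient depends on a single time variable. Once this structure is in place, the base case, the Young-integrability of the $dh_j$-integrals, and the $p$-variation estimates are all routine.
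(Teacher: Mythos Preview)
The paper does not prove this proposition: it is introduced as ``the following restatement of~\cite[Proposition 4.4]{H-P}'' and no proof is given, only the Remark that follows describing the recursive construction of the $f_\ell^{\eps,m}$. Your inductive argument --- peeling off one directional derivative at a time, using the variation-of-constants formula with fundamental solution $\mathbf{J}^\eps_r(\mathbf{J}^\eps_s)^{-1}$, and absorbing the intermediate $\int_{t_1}^r(\cdots)\,d\mathbf{B}$ factors via additivity --- is precisely the approach taken in \cite{H-P}, so your sketch aligns with the source the paper relies on.

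Two small points worth tightening. First, you write that ``no power of $\eps$ appears'', but Proposition~\ref{prop:deriv-sde} combined with the SDE \eqref{equ: SDE} actually gives $\mathbf{D}^j_sX^\eps_t=\eps\,\mathbf{J}^\eps_t(\mathbf{J}^\eps_s)^{-1}V_j(X^\eps_s)$ (the paper uses this form explicitly in the proof of Lemma~\ref{th: Malliavin est}); the powers of $\eps$ that accumulate are simply absorbed into the $f_\ell^{\eps,m}$, and since $\eps\in(0,1]$ this is harmless for the bound \eqref{eq:high-order-bound}. Second, your Fubini step yields terms in which the new $h_1$-variable sits at various positions within the simplex, not necessarily the innermost one; the representation \eqref{high order rep} as stated pairs $h_j$ with $t_j$ in a fixed order, so one should either invoke the commutativity $\mathbf{D}_{h_1}\cdots\mathbf{D}_{h_k}=\mathbf{D}_{h_{\sigma(1)}}\cdots\mathbf{D}_{h_{\sigma(k)}}$ to relabel, or note that the indexing sets $\mathbf{K}_{(i_1,\ldots,i_k)}$ are allowed to encode this permutation data. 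Neither point is a genuine gap.
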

\begin{remark}\label{rk: f_ml}
The $f^{\eps,m}_l$'s in the above proposition are constructed recursively from $\partial_x^kV(X^\eps_s)$, $\bj_s^\eps, (\bj_s^\eps)^{-1}$ and multiplications of them through iterated integrals with respect to $B_s$.  By the discussion in \cite[Chapter 11]{FV-bk}, components of $\partial^k_xV(X_s^\eps), \bj_s^\eps$ and $(\bj^\eps_s)^{-1}$ are smooth with respect to $\eps$ in $\mathcal{C}^{p-var}([0,1], \mr)$. Moreover,  derivatives of $\partial^k_xV(X_s^\eps), \bj_s^\eps$ and $(\bj^\eps_s)^{-1}$ (with respect to $\eps$) satisfy  linear equations similar to (\ref{eq:jacobian}) and by the same technique used in proving Proposition \ref{prop:moments-jacobian}, one can show that the derivatives are in $L^r(\mp)$ for all $r\geq 1$ and uniform in $\eps\in(0,1]$. Hence $f^{\eps,m}_l$ is differentiable with respect to $\eps$ in  $\mathcal{C}^{p-var}([0,1], \mr)$. Moreover for all $r\geq  1$, one has
$$\me\sup_{\eps\in(0,1]}\left\|\frac{d f^{\eps,m}_l}{d\eps}\right\|^r_{p-var; [0,1]}< \infty$$
for all $l$ and $m$.
\end{remark}

Let $\Phi: \msh\to \mathcal{C}([0,1],\mathbb{R}^{n})$ be given by solving the ordinary diferential equation
\begin{align}\label{phi}\Phi_t(h)=x+\sum_{i=1}^d\int_0^tV_i(\Phi_s(h))dh_s^i.\end{align}
Following  Proposition \ref{prop:imbed-bar-H}, Proposition \ref{th:express DX} and Remark \ref{rk: f_ml}, we have

\begin{proposition}\label{th: smoothness in eps}
For each $h\in\msh$, one has
\begin{align}\label{smoothness in eps}
\lim_{\eps\downarrow0}\frac{1}{\eps}\left(\Phi_1(\eps B+h)-\Phi_1(h)\right)=Z(h),
\end{align}
in the topology of $\md^\infty$.
\end{proposition}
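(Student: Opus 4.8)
The plan is to realize $\Phi_1(\eps B+h)$ at the rough path level, identify the candidate limit $Z(h)$ as the solution of a linearized equation, and then establish a first-order Taylor expansion in $\eps$ that is uniform in every Malliavin--Sobolev norm. By Proposition~\ref{prop:fbm-rough-path} the fBm lifts to a geometric rough path $\bb$, and, in the notation of the Remark following Lemma~\ref{th: rough-couple-conti}, $\eps B+h$ stands for the translated rough path $T_h(\eps\bb)$, where $\eps\bb$ is the dilation of $\bb$ by $\eps$; by Lemma~\ref{th: rough-couple-conti} this lies in $\cac^{p-{\rm var}}([0,1],G^{\lfloor p\rfloor}(\mr^d))$ for $p>1/H$. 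We write $Y^\eps$ for the solution of the rough differential equation with vector fields $V_1,\dots,V_d$ driven by $T_h(\eps\bb)$, so that $\Phi_1(\eps B+h)=Y^\eps_1$ and $Y^0=\Phi(h)$, whence $Y^0_1=\Phi_1(h)$ is deterministic. The representation of Proposition~\ref{th:express DX} and the $\eps$-smoothness of Remark~\ref{rk: f_ml} apply to $Y^\eps$ in place of $X^\eps$, the extra $\cac^{\rho-{\rm var}}$-regular $dh$-driven term in the equation for $Y^\eps$ only helping the estimates.

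The candidate limit is $Z(h):=Z_1(h)$, where $Z_\cdot(h)$ solves the linear equation obtained by differentiating~\eqref{phi} in the direction $\bb$:
\begin{align*}
Z_t(h)=\sum_{i=1}^d\int_0^t DV_i(\Phi_s(h))\,Z_s(h)\,dh^i_s+\sum_{i=1}^d\int_0^t V_i(\Phi_s(h))\,dB^i_s,
\end{align*}
equivalently, by variation of constants, $Z_t(h)=\sum_{i=1}^d\int_0^t J^h_{st}V_i(\Phi_s(h))\,dB^i_s$, where $J^h$ is the Jacobian of $h\mapsto\Phi(h)$ and $J^h_{st}=J^h_t(J^h_s)^{-1}$. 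Since $h\in\msh\subset\cac^{\rho-{\rm var}}$ with $\rho>(H+\tfrac{1}{2})^{-1}$ by Proposition~\ref{prop:imbed-bar-H}, the integrands $s\mapsto J^h_{st}V_i(\Phi_s(h))$ have finite $\rho$-variation, and since $H>\tfrac{1}{4}$ one has $\rho^{-1}+p^{-1}>1$ for a suitable $p>1/H$, so the integrals against $B$ are well-defined Young integrals; being linear in $B$ with deterministic $\cac^{\rho-{\rm var}}$-coefficients, $Z_1(h)$ lies in the first Wiener chaos, hence $Z_1(h)\in\md^\infty$, with $\bd Z_1(h)$ deterministic and $\bd^k Z_1(h)=0$ for $k\ge2$.

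The heart of the proof is the expansion $Y^\eps_1=\Phi_1(h)+\eps\,Z_1(h)+\eps^2 R^\eps$, with $R^\eps$ bounded, uniformly in $\eps\in(0,1]$, in every $\md^{k,p}$. At the $L^p(\Omega)$ level this follows from the (pathwise) smooth dependence of the It\^o--Lyons solution map on the driving signal, together with Lemma~\ref{th: rough-couple-conti} and the uniform moment bounds~\eqref{eq:exp-delta-moments} and~\eqref{eq:moments-J-pvar} (plus the standard Gaussian integrability of $\|\bb\|_{p-{\rm var}}$), which control the remainder by moments of $(X^\eps,\bj^\eps,(\bj^\eps)^{-1})$ and $\|\bb\|_{p-{\rm var}}$. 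To upgrade to the $\md^{k,p}$ norms, one applies $\bd^k$ and uses Proposition~\ref{th:express DX}: $\bd_{h_1}\cdots\bd_{h_k}Y^\eps_1$ is a finite sum of iterated integrals of products of the functions $f^{\eps,m}_\ell\in\cac^{p-{\rm var}}$, with one integration inherited from the $dB$-driving term. By Remark~\ref{rk: f_ml} each $f^{\eps,m}_\ell$ is differentiable in $\eps$ in $\cac^{p-{\rm var}}([0,1],\mr)$ with $\sup_{\eps\in(0,1]}\me\|\tfrac{d}{d\eps}f^{\eps,m}_\ell\|^r_{p-{\rm var};[0,1]}<\infty$ for every $r\ge1$; substituting $f^{\eps,m}_\ell=f^{0,m}_\ell+\eps\,\tfrac{d}{d\eps}f^{0,m}_\ell+\eps^2(\cdots)$ into the representation, estimating the iterated integrals by the Young/rough bounds valid under $V_\rho(R)<\infty$ (Proposition~\ref{prop:imbed-bar-H}), and using $\bd^k\Phi_1(h)=0$, yields $\bd^k Y^\eps_1=\eps\,\bd^k Z_1(h)+\eps^2 R^{\eps,k}$ with $\sup_{\eps\in(0,1]}\me\|R^{\eps,k}\|^p_{\ch^{\otimes k}}<\infty$. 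Dividing by $\eps$ and letting $\eps\downarrow0$, both $\eps^{-1}(Y^\eps_1-\Phi_1(h))\to Z_1(h)$ in $L^p(\Omega)$ and $\eps^{-1}\bd^k(Y^\eps_1-\Phi_1(h))\to\bd^k Z_1(h)$ in $L^p(\Omega;\ch^{\otimes k})$ for all $k,p$; since $Z_1(h)\in\md^\infty$ and the operators $\bd^k$ are closed, this is precisely convergence in $\md^\infty$.

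The main obstacle is not the first-order pathwise expansion but its uniformity across all Malliavin--Sobolev norms: one has to bound every iterated Malliavin derivative of the $O(\eps^2)$ remainder with $L^p(\Omega)$ moments uniform in $\eps\in(0,1]$, which forces the argument to be carried out inside the explicit iterated-integral representation of Proposition~\ref{th:express DX} and to exploit the $\eps$-smoothness of the coefficient functions $f^{\eps,m}_\ell$ together with their $\eps$-uniform $p$-variation moments from Remark~\ref{rk: f_ml}; the latter in turn depends on the Jacobian moment estimates of Proposition~\ref{prop:moments-jacobian}.
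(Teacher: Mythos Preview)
Your proposal is correct and follows essentially the same route as the paper: both arguments set $Y^\eps=\Phi(\eps B+h)$, invoke the iterated-integral representation of Proposition~\ref{th:express DX} for $\bd_{h_1}\cdots\bd_{h_k}Y^\eps_1$, use the $\eps$-smoothness and uniform $p$-variation moments of the coefficient functions $f^{\eps,m}_\ell$ from Remark~\ref{rk: f_ml}, and control the $\ch^{\otimes k}$-norm via Young estimates together with the embedding $\msh\hookrightarrow\cac^{\rho-{\rm var}}$ of Proposition~\ref{prop:imbed-bar-H}. The only cosmetic difference is that the paper argues directly with the difference quotient (writing $\tfrac{1}{\eps}(Y^\eps_1-Y^0_1)=\tfrac{1}{\eps}\int_0^\eps \tfrac{d}{d\theta}Y^\theta_1\,d\theta$ and using uniform integrability, then treating $k=1$ and general $k$ by showing convergence of the integrands uniformly in the $h_i$'s), whereas you package the same estimates as a second-order expansion $Y^\eps_1=\Phi_1(h)+\eps Z_1(h)+\eps^2 R^\eps$; the paper's version needs only first-derivative bounds on $f^{\eps,m}_\ell$, while your remainder estimate implicitly uses one more derivative, but this is available by the same reasoning in Remark~\ref{rk: f_ml}.
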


\begin{proof}
We need to show that the convergence stated in the proposition takes place in $\|\cdot\|_{k,r}$ for any $k\geq 0$ and $r\geq 1$.  First note that $Y^\eps_t:=\Phi_{t}(\eps B+h)$ satisfies equation
\begin{align}
Y_t^\eps=x+\sum_{i=1}^d\int_0^tV_i(Y_s^\eps)d(\eps B_s^i+h^i).
\end{align}
When $H>\frac{1}{2}$, note that for any integer $k\geq0$ and fixed $s_1,...,s_k$,  $\bd^k_{s_1,...,s_k} Y_t^\eps$ satisfies an linear equation.  By the discussion in \cite[Chapter 11]{FV-bk},  $\bd^k_{s_1,...,s_k} Y_t^\eps$ is differentiable with respect to $\eps$ as a random vector. The fact that $\bd^k_{s_1,...,s_k} Y_t^\eps$ is also differentiable with respect to the norm $\me\|\cdot\|_{\mathcal{H}^{\otimes k}}$ follows from the fact that $\|\cdot\|_{\mathcal{H}^{\otimes k}}$ is controlled by the sup-norm and the integrability of the system.

Next, we focus on the case $H<\frac{1}{2}$, and divide the proof into three steps.

\noindent{\it Step 1}:  When $k=0$.  It is known that $Y^\eps$ is smooth (path-wise) with respect to $\eps$ in the topology of $\mathcal{C}^{p-var}([0,1],\mr^n)$ (cf. \cite[Chapter 11]{FV-bk}). To  see that the convergence in (\ref{smoothness in eps}) also take place in $L^r(\mp)$, one only needs to note that
$$\frac{Y_1^\eps-Y_1^0}{\eps}=\frac{1}{\eps}\int_0^\eps \frac{d Y_1^\theta}{d\theta}d\theta$$
where the above is considered to be an equation in $\mathcal{C}^{p-var}([0,1]; \mr^n)$.  Since $dY_1^\theta/d\theta$ satisfies a linear equation,  $\|dY_1^\theta/d\theta\|_{p-var; [0,1]}$ is uniformly integrable with respect to $\theta\in[0,1]$ in $L^r(\mp)$ for all $r\geq1$. Hence one can conclude that $|(Y_1^\eps-Y_1^0)/\eps|$ is uniformly integrable in $L^r(\mp)$. The claimed convergence follows for the case $k=0$.

\ \\
\noindent{\it Step 2}: When $k=1$.  Denote by $J^\eps_t=\frac{\partial Y_t^\eps}{\partial x}$, the Jacobean of $Y^\eps_t$. We have for any $h_1\in\msh$
$$\bd_{h_1}Y_1^\eps=\eps\int_0^1 J_1^\eps (J_s^{\eps})^{-1}V_i(Y^\eps_s)dh_{1}^i(s).$$
Let $f_s^{\eps,1}= \eps J_1^\eps (J_s^{\eps})^{-1}V_i(Y^\eps_s)$, the integrand in the above integral.  It can be shown that  $f_t^{\eps,1}$ is smooth with respect to $\eps$ in  $\mathcal{C}^{p-var}([0,1];\mr^n)$. In particular, we have
\begin{align*}
&\left|\frac{\bd_{h_1}Y_1^\eps-\bd_{h_1}Y_1^0}{\eps}-\frac{d \bd_{h_1}Y_1^\eps}{d\eps}\big|_{\eps=0}\right|\\
=&\quad\left|\int_0^1\frac{f^{\eps,1}_s-f^{0,1}_s}{\eps}-\frac{df^{\eps,1}_s}{d\eps}\big|_{\eps=0}dh_1^i(s) \right|\\
\leq& C\left(\left|\frac{f^{\eps,1}_0-f^{0,1}_0}{\eps}-\frac{df^{\eps,1}_0}{d\eps}\big|_{\eps=0}\right|+\left\|\frac{f^{\eps,1}_s-f^{0,1}_s}{\eps}-\frac{df^{\eps,1}_s}{d\eps}\big|_{\eps=0}\right\|_{p-var;[0,1]}\right)\|h_1\|_{q-var;[0,1]}\\
\leq& C\left(\left|\frac{f^{\eps,1}_0-f^{0,1}_0}{\eps}-\frac{df^{\eps,1}_0}{d\eps}\big|_{\eps=0}\right|+\left\|\frac{f^{\eps,1}_s-f^{0,1}_s}{\eps}-\frac{df^{\eps,1}_s}{d\eps}\big|_{\eps=0}\right\|_{p-var;[0,1]}\right)\|h_1\|_{\msh}\ \longrightarrow 0\quad\mathrm{as}\ \eps\downarrow 0.
\end{align*}
This implies that the convergence
$$\left|\frac{\bd_{h_1}Y_1^\eps-\bd_{h_1}Y_1^0}{\eps}-\frac{d \bd_{h_1}Y_1^\eps}{d\eps}\big|_{\eps=0}\right|\to 0\quad \quad \mathrm{as}\ \eps\downarrow 0,$$
is uniform in $h_1\in\msh$. Hence $$\frac{ \bd Y^\eps_1- \bd Y^0_1}{\eps} \to \frac{d \bd Y^\eps_1}{d\eps}\big|_{\eps=0}$$ in $\mathcal{{H}}$ almost surely. The fact that the convergence is also in $\|\cdot\|_{1,r}$ follows from the uniform $L^r(\mp)$ integrability of $\left\|\frac{ \bd Y^\eps_1- \bd Y^0_1}{\eps} - \frac{d \bd Y^\eps_1}{d\eps}|_{\eps=0}\right\|_{\mathcal{H}}$ in $\eps$.

\ \\
\noindent{\it Step 3}: Now we proof for general $k\geq 1$.  Note that for $ h_{1},\ldots,h_{k} \in \msh$, the directional derivative $\mathbf{D}_{h_{1}}\ldots \mathbf{D}_{h_{k}}Y^\eps_t $ exists for any $t\in \left[
0,1\right] .$ Moreover, by Proposition \ref{th:express DX} (with a slight modification) there exists a collection of finite indexing sets
\begin{equation*}
\left\{ \mathbf{K}_{\left( i_{1},\ldots ,i_{k}\right) }:\left(
i_{1},\ldots ,i_{k}\right) \in \left\{ 1,\ldots ,d\right\} ^{k}\right\},
\end{equation*}%
such that for every $j\in \left\{ 1,..,n\right\} $ we have
\begin{eqnarray}
&&\mathbf{D}_{h_{1}}\ldots \mathbf{D}_{h_{k}}Y_{1}^{\eps,j}  \label{high order rep} \\
&=&\sum_{i_{1},\ldots ,i_{k}=1}^{d}\sum_{m\in \mathbf{K}_{\left(
i_{1},\ldots ,i_{k}\right) }}\int_{0<t_{1}<\cdots <t_{k}<1}f_{1}^{\eps,m}\left(
t_{1}\right) \ldots f_{k}^{\eps,m}\left( t_{k}\right) f_{k+1}^{\eps,m}\left( 1\right)
d h_1^{i_{1}}(t_{1})\ldots d h^{i_{k}}_k(t_{k}),  \notag
\end{eqnarray}%
for some functions $f_{\ell}^{\eps,m}$ which are in $\cac^{p-{\rm var}}\left( \left[
0,1\right] ,%
%TCIMACRO{\U{211d} }%
%BeginExpansion
\mathbb{R}
%EndExpansion
\right) $ for every $\ell$ and $m$. By Remark \ref{rk: f_ml} each $f^{\eps,m}_l$ is smooth with respect to $\eps$ in $\mathcal{C}^{p-var}([0,1]; \mr)$ with uniform integrable derivatives.  Now by a similar argument to that in {\it Step 2}  the proof is completed.
\end{proof}

Finally, we close the discussion of this section by the following large deviation principle that will be needed later.
\begin{theorem}\label{th: LDP}%Let $\Phi: \msh\to \mathcal{C}[0,1]$ be given by solving the ordinary diferential equation
%$$\Phi_t(h)=x+\int_0^tV_0(\Phi_s(h))ds+\sum_{i=1}^d\int_0^tV_i(\Phi_s(h))dh_s^i.$$
Let $\Phi$ be given in (\ref{phi}), which is a differentiable mapping from $\msh$ to $\mathcal{C}([0,1],\mathbb{R}^{n})$. Denote by $\gamma_{\Phi_1(h)}$ the deterministic Malliavin matrix
 of $\Phi_1(h)$, i.e., $\gamma^{ij}_{\Phi_1(h)}=\langle \bd \Phi_1^i(h), \bd\Phi_1^j(h)\rangle_\ch$, and introduce the following functions on $\mr^n$ and $\mr^n\times\mr$, respectively
$$I(y)=\inf_{\Phi_1(h)=y}\frac{1}{2}\|h\|_{\msh}^2, \quad\textnormal{and}\ \  I_R(y,a)=\inf_{\Phi_1(h)=y, \gamma_{\Phi_1(h)}=a}\frac{1}{2}\|h\|_\msh^2.$$
Recall that $X_1^\eps$ is the solution to equation (\ref{equ: SDE}) and $\gamma_{X_1^\eps}$ is the Malliavin matrix of $X_1^\eps$. Then

(1) $X_1^\eps$ satisfies a large deviation principle with rate function $I(y)$.

(2)The couple $(X_1^\eps,\gamma_{X_1^\eps})$ satisfies a large deviation principle with rate function $I_R(y,a)$.

\end{theorem}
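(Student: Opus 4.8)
\emph{Overview and Step 1 (Schilder for the lifted fBm).} The plan is the classical Freidlin--Wentzell strategy: obtain a Schilder-type large deviation principle for the driving rough path and transport it through a continuity (contraction) argument; the only fBm-specific difficulty is the continuity of the Malliavin matrix as a functional of the rough path. It is convenient here to fix $\gamma$ with $\tfrac12-H<\gamma<H$ — a choice made possible precisely because $H>\tfrac14$ — and to work in the $\gamma$-H\"older geometric rough path topology rather than in $p$-variation; in this topology Proposition~\ref{prop:fbm-rough-path} provides the lift $\eps\mathbf{B}$. Then the family $\{\eps\mathbf{B}\}_{\eps\in(0,1]}$ satisfies, as $\eps\downarrow 0$, a large deviation principle with good rate function $\mathbf{x}\mapsto\tfrac12\|h\|_\msh^2$ if $\mathbf{x}=S(h)$ is the canonical lift of some $h\in\msh$ and $+\infty$ otherwise; here one uses Proposition~\ref{prop:imbed-bar-H} to ensure that any $h\in\msh$ is Young-regular, hence liftable. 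This generalized Schilder theorem for fractional Brownian motion is available in \cite{FV-bk}.

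\emph{Step 2 (the couple as a continuous image of $\eps\mathbf{B}$).} I would construct a continuous map $\Psi$ from the rough path space to $\R^n\times\R^{n\times n}$ with $\Psi(\eps\mathbf{B})=(X_1^\eps,\gamma_{X_1^\eps})$ almost surely and $\Psi(S(h))=(\Phi_1(h),\gamma_{\Phi_1(h)})$ for every $h\in\msh$. By Proposition~\ref{prop:deriv-sde}, the enhanced process $M^\eps=(X^\eps,\bj^\eps,(\bj^\eps)^{-1})$ solves a rough differential equation driven by $\eps\mathbf{B}$ with $C^\infty$-bounded coefficients, hence is a continuous image of $\eps\mathbf{B}$ by the universal limit theorem and is $\gamma$-H\"older regular; consequently the path $s\mapsto \bd^k_s X_1^\eps=\bj_1^\eps(\bj_s^\eps)^{-1}V_k(X_s^\eps)$ is $\gamma$-H\"older and depends continuously on $M^\eps$ in $\gamma$-H\"older norm. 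Finally, recalling from Proposition~\ref{prop:deriv-sde} and Definition~\ref{non-deg} that
\[
\gamma_{X_1^\eps}^{ij}=\sum_{k=1}^d\big\langle\big(\bj_{\cdot1}^\eps V_k(X_\cdot^\eps)\big)^i,\ \big(\bj_{\cdot1}^\eps V_k(X_\cdot^\eps)\big)^j\big\rangle_\ch ,
\]
the continuous embedding $\cC^\gamma\hookrightarrow\ch$ recalled earlier (valid since $\gamma>\tfrac12-H$) makes $\langle\cdot,\cdot\rangle_\ch$ a bounded bilinear form in $\gamma$-H\"older norm, so $\gamma_{X_1^\eps}$ also depends continuously on $M^\eps$. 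Running the identical construction with $\eps\mathbf{B}$ replaced by the smooth lift $S(h)$ yields $(\Phi_1(h),\gamma_{\Phi_1(h)})$, giving the required compatibility.

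\emph{Step 3 (contraction principle).} Combining Steps~1 and~2, the contraction principle gives that $(X_1^\eps,\gamma_{X_1^\eps})=\Psi(\eps\mathbf{B})$ satisfies a large deviation principle with good rate function
\[
\inf\Big\{\tfrac12\|h\|_\msh^2:\ h\in\msh,\ \Psi(S(h))=(y,a)\Big\}=\inf_{\Phi_1(h)=y,\ \gamma_{\Phi_1(h)}=a}\tfrac12\|h\|_\msh^2=I_R(y,a),
\]
which is assertion~(2). Composing $\Psi$ with the continuous projection $(y,a)\mapsto y$ and applying the contraction principle again gives assertion~(1), with rate function $\inf_{\Phi_1(h)=y}\tfrac12\|h\|_\msh^2=I(y)$. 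Goodness of $I$ and $I_R$ is automatic, as they are continuous pushforwards of the good rate function $\tfrac12\|\cdot\|_\msh^2$.

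\emph{Main obstacle.} The substantive point is the continuity of $\gamma_{X_1^\eps}$ in Step~2 — that the $\ch$-inner product of the Malliavin derivatives is a continuous functional of the driving rough path — which is genuinely borderline at $H=\tfrac14$, where the interval $(\tfrac12-H,H)$ for the admissible H\"older exponent degenerates. An equivalent route is to interpret $\langle f,g\rangle_\ch$ directly as a two-dimensional integral against the covariance $R$ — a Young integral when $H>\tfrac13$, and a controlled-rough-path integral when $\tfrac14<H\le\tfrac13$ — using $V_\rho(R)<\infty$ for $\rho=1/(2H)$ from Proposition~\ref{prop:fbm-rough-path}; this is precisely the mechanism by which $\gamma_{X_1^\eps}$ was shown to be well defined in \cite{H-P,CHLT}. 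Once this continuity is secured, the rest reduces to Schilder's theorem, the universal limit theorem, and the contraction principle.
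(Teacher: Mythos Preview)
Your proposal is correct and follows essentially the same approach as the paper: Schilder's theorem for $\eps\mathbf{B}$ (cited from \cite{FV-bk}) followed by the contraction principle, with the key step being the continuity of $\mathbf{x}\mapsto\gamma_{\Phi_1(\mathbf{x})}$ as a functional of the driving rough path. The paper works in $p$-variation topology and justifies this continuity rather tersely by reference to the method of Proposition~\ref{th: smoothness in eps}, whereas your route through the continuous embedding $\cC^\gamma\hookrightarrow\ch$ for $\tfrac12-H<\gamma<H$ is simply a more explicit way to secure the same conclusion.
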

\begin{proof}
Fix any $p>\frac{1}{H}$. It is known (see \cite{FV-bk}) that $\mathbf{B}$ as a $G^{\lfloor p\rfloor}(\mr^d)$-valued rough path satisfies a large deviation principle in $p$-variation topology with good rate function given by
\begin{align*}
J(h)=\left\{\begin{array}{ll}\frac{1}{2}\|h\|^{2}_\msh\ \mathrm{if}\ h\in\msh\\ +\infty\quad \mathrm{otherwise}.
\end{array}
\right.
\end{align*}
It is clear $\Phi_1(\cdot): G^{\lfloor p\rfloor}(\mr^d) \to \mr^n$ is continues. 

Moreover, by a similar argument as in Proposition \ref{th: smoothness in eps}, one have for all $H>\frac{1}{4}$
$$\bd_\cdot \Phi_1(\cdot): G^{\lfloor p\rfloor}(\mr^d)\to\mathcal{H}$$ is continues.  
 %when $\frac{1}{4}<H<\frac{1}{4}$, for any fixed $\frac{1}{2}-H<\gamma<H$,  the map $$\bd_\cdot \Phi_1(\cdot): G^{\lfloor p\rfloor}(\mr^d)\to C^\gamma\hookrightarrow\mathcal{H}$$ is also continues. While when $H>\frac{1}{2}$, it is clear that 
%$$\bd_\cdot \Phi_1(\cdot): G^{\lfloor p\rfloor}(\mr^d)\to\mathcal{H}$$ is continues.  
Hence $ \gamma_{\Phi_1(\cdot)}: G^{\lfloor p\rfloor}(\mr^d)\to \mr$ is continues for all $H>\frac{1}{4}$.

 Now note that $X_1^\eps=\Phi_1(\eps \mathbf{B})$ and $(X_1^\eps, \gamma_{X_1^\eps})=(\Phi(\eps\mathbf{B}), \gamma_{\Phi_1(\eps\mathbf{B})})$, the claimed result follows from the contraction principle.
\end{proof}

\section{Varadhan Estimates}

Recall that we are interested in a family of stochastic differential equations driven by fractional Brownian motions $B$ (with Hurst parameter $H>\frac{1}{4}$) of the following form
\begin{align*}
X_t^\eps=x+\eps\sum_{i=1}^d\int_0^tV_i(X_s^\eps)dB_s^i.
\end{align*}
We have defined the map $\Phi: \msh\to \mathcal{C}[0,1]$  by solving the ordinary deferential equation
$$\Phi_t(h)=x+\sum_{i=1}^d\int_0^tV_i(\Phi_s(h))dh_s^i.$$
Clearly, we have $X_t^\eps=\Phi_t(\eps \mathbf{B})$. Introduce the following functions on $\mr^n$, which depends on $\Phi$
$$d^2(y)=I(y)=\inf_{\Phi_1(h)=y}\frac{1}{2}\|h\|_{\msh}^2,\quad\mathrm{and}\quad d^2_R(y)=\inf_{\Phi_1(h)=y, \det\gamma_{\Phi_1(h)}>0}\frac{1}{2}\|h\|_\msh^2.$$

\bigskip
Assume Hypothesis \ref{hyp:elliptic} or Hypothesis \ref{UH condition} below, our  main result is the following.
\begin{theorem}\label{th: main result}
Let us denote by $p_\eps(y)$ the density of $X_1^\eps$. Then
\begin{align}\label{main claim 1}
\liminf_{\eps\downarrow0}\eps^2\log p_\eps(y)\geq -d^2_R(y),
\end{align}
and
\begin{align}\label{main claim 2}
\limsup_{\eps\downarrow0}\eps^2\log p_\eps(y)\leq -d^2(y).
\end{align}
Moreover, if $\inf_{\Phi_1(h)=y, \det\gamma_{\Phi_1(y)}>0}\det\gamma_{\Phi_1(h)}>0$, then
\begin{align}\label{main claim 3}
\lim_{\eps\downarrow0}\eps^2\log p_\eps(y)=-d^2_R(y).
\end{align}
\end{theorem}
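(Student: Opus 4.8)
The plan is to follow the classical Ben Arous--L\'eandre strategy, adapted to the rough path setting, in three separate pieces corresponding to \eqref{main claim 1}, \eqref{main claim 2} and \eqref{main claim 3}. For the upper bound \eqref{main claim 2}, I would write $p_\eps(y)$ via the integration by parts formula of Proposition \ref{th: IBP}: for a suitable cutoff function $\chi$ supported near $y$, $p_\eps(y) = \me[\1_{X_1^\eps \ge y}\, H_{(1,\dots,n)}(X_1^\eps, \chi(X_1^\eps))]$, so that $|p_\eps(y)| \le \mp(|X_1^\eps - y|<\rho)^{1/2}\, \|H_{(1,\dots,n)}\|_{L^2}$. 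By Remark \ref{est H} the Malliavin weight is controlled by powers of $\|\det\gamma_{X_1^\eps}^{-1}\|_{L^\beta}$ and $\|\bd X_1^\eps\|_{k,\gamma}$; the uniform hypoellipticity hypothesis gives $\|\gamma_{X_1^\eps}^{-1}\|_r \le c_r\eps^{-2l}$ as quoted in the introduction, while Propositions \ref{prop:moments-sdes-rough}--\ref{prop:moments-jacobian} and \ref{th:express DX} bound the derivative moments uniformly in $\eps$. Thus $\eps^2 \log p_\eps(y) \le \tfrac12 \eps^2 \log \mp(|X_1^\eps-y|<\rho) + O(\eps^2\log\eps^{-1})$, and letting $\eps\downarrow 0$ and then $\rho \downarrow 0$, the large deviation principle for $X_1^\eps$ (Theorem \ref{th: LDP}(1)) gives $\limsup \le -\inf_{|z-y|\le\rho} I(z) \to -I(y) = -d^2(y)$ by lower semicontinuity of $I$.

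For the lower bound \eqref{main claim 1}, the idea is to localize near a near-optimal Cameron--Martin path. Fix $h$ with $\Phi_1(h)=y$ and $\det\gamma_{\Phi_1(h)}>0$, with $\tfrac12\|h\|_\msh^2$ close to $d_R^2(y)$. Using the Cameron--Martin translation $\mathbf{B}\mapsto \mathbf{B}+h/\eps$ (the rough path translation $T_{h/\eps}$ of Lemma \ref{th: rough-couple-conti}), one has $p_\eps(y) = \me\big[ \delta_y(X_1^\eps)\big] = e^{-\|h\|_\msh^2/(2\eps^2)}\, \me\big[ e^{-\frac1\eps \langle K_H^* (h), dW\rangle }\, \delta_y(\tilde X_1^\eps) \big]$, where $\tilde X_1^\eps = \Phi_1(\eps\mathbf{B}+h)$ is the solution driven by the translated path. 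By Proposition \ref{th: smoothness in eps}, $\eps^{-1}(\tilde X_1^\eps - \Phi_1(h)) \to Z(h)$ in $\md^\infty$, and $Z(h)$ is a nondegenerate Gaussian vector (its Malliavin matrix is $\gamma_{\Phi_1(h)}>0$); hence $\eps^{-1}(\tilde X_1^\eps - y)$ converges to a random variable with a continuous density, and a Malliavin-calculus argument (uniform nondegeneracy of the rescaled variables, as in Ben Arous--L\'eandre) shows the density of $\tilde X_1^\eps$ at $y$ is, up to the factor $\eps^{-n}$ coming from rescaling, bounded below by a positive constant times $\eps^{-n}$. Combined with Jensen's inequality applied to the exponential martingale factor (whose exponent has mean zero), one gets $\eps^2\log p_\eps(y) \ge -\tfrac12\|h\|_\msh^2 + o(1)$, and optimizing over $h$ yields $\liminf \ge -d_R^2(y)$.

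For the matching statement \eqref{main claim 3}, under the extra assumption $\inf\{\det\gamma_{\Phi_1(h)} : \Phi_1(h)=y,\ \det\gamma_{\Phi_1(h)}>0\} =: \kappa > 0$, I would upgrade the upper bound \eqref{main claim 2} to the sharper $-d_R^2(y)$. The point is that the large deviation principle of Theorem \ref{th: LDP}(2) for the \emph{pair} $(X_1^\eps,\gamma_{X_1^\eps})$ lets one estimate $\mp(|X_1^\eps-y|<\rho,\ \det\gamma_{X_1^\eps}<\kappa/2)$ with rate function $\inf\{ I_R(z,a) : |z-y|\le\rho,\ \det a \le \kappa/2\}$, which by the definition of $\kappa$ stays bounded below by a quantity converging to $d_R^2(y)$ as $\rho\to 0$ (paths hitting $y$ with small but positive $\det\gamma$ are excluded, and those with $\det\gamma = 0$ don't contribute to $d_R$ precisely where $\kappa$ separates them). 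On the complementary event $\det\gamma_{X_1^\eps}\ge\kappa/2$ one has good deterministic control of the Malliavin weights (no $\eps^{-2l}$ blow-up needed there). Splitting the integration-by-parts representation of $p_\eps(y)$ across these two events and running the argument of the first paragraph on each piece gives $\limsup \eps^2\log p_\eps(y)\le -d_R^2(y)$, which together with \eqref{main claim 1} proves \eqref{main claim 3}.

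The main obstacle I expect is the lower bound, specifically establishing the \emph{uniform} (in $\eps$) nondegeneracy of the rescaled solutions $\eps^{-1}(\tilde X_1^\eps-y)$ driven by the $h$-translated rough path, so that one can pass the density lower bound through the limit $\eps\downarrow 0$. This requires controlling the Malliavin matrix of $\tilde X_1^\eps$ from below uniformly near $\eps=0$, which in the rough (non-Markovian, $H<1/2$) setting cannot rely on the classical Brownian tools and instead needs the quantitative $p$-variation estimates of Propositions \ref{th:express DX}--\ref{th: smoothness in eps} together with the integrability of the Jacobian from Proposition \ref{prop:moments-jacobian}; keeping all constants uniform in $\eps$ along the way is the delicate part. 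A secondary technical point is making the heuristic $p_\eps(y)=\me[\delta_y(X_1^\eps)]$ rigorous, which is handled throughout by the integration-by-parts formula of Proposition \ref{th: IBP} rather than any literal delta function.
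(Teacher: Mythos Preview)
Your overall strategy matches the paper's: upper bound via integration by parts plus the large deviation principle, lower bound via Cameron--Martin translation and convergence in $\md^\infty$ of the rescaled solution. Two small technical corrections are needed. In the upper bound you use H\"older with fixed exponent $1/2$, which only yields $\limsup \le -\tfrac12 d^2(y)$; you must take $\mp(\cdot)^{1/q}\|H\|_{L^p}$ with $q\downarrow 1$ (and shrink $\rho$) to recover the full rate, exactly as the paper does. In the lower bound, Jensen's inequality does not apply to $\me[e^{B(h)/\eps}\delta_y(\tilde X_1^\eps)]$ since the two factors are not independent; the paper instead inserts a cutoff $\chi(\eps B(h))$ supported on $\{|\eps B(h)|\le 2\eta\}$, so that the exponential is deterministically bounded below by $e^{-2\eta/\eps^2}$ on that set, and then passes to the limit using Proposition~\ref{th: smoothness in eps}. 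Your identification of the main obstacle (uniform nondegeneracy of the rescaled, translated solution) is correct and is precisely what Proposition~\ref{th: smoothness in eps} provides.

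The genuine gap is in your argument for \eqref{main claim 3}. You propose to bound the contribution from $\{\det\gamma_{X_1^\eps}<\kappa/2\}$ via the joint LDP, with rate $\inf\{I_R(z,a):|z-y|\le\rho,\ \det a\le\kappa/2\}$, claiming this converges to $d_R^2(y)$. But this infimum can be attained by paths $h$ with $\Phi_1(h)=y$ and $\det\gamma_{\Phi_1(h)}=0$; such $h$ are not excluded by the hypothesis on $\kappa$, and they may well satisfy $\tfrac12\|h\|_\msh^2=d^2(y)<d_R^2(y)$. Hence your LDP bound on the small-$\det\gamma$ piece gives only rate $d^2(y)$, which is not sharp enough to conclude $\limsup\le -d_R^2(y)$. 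The paper resolves this differently: it shows that the small-$\det\gamma$ contribution $\me[G_\eps\chi(X_1^\eps)\delta_y(X_1^\eps)]$ vanishes \emph{identically}. The reasoning is a support-type argument via Karhunen--Lo\`eve approximation in $\md^\infty$: were this expectation nonzero, one could extract $h\in\msh$ with $\Phi_1(h)=y$ and $0<\det\gamma_{\Phi_1(h)}<\kappa/2$, contradicting the definition of $\kappa$ (the strict positivity of $\det\gamma$ comes from the fact that the approximating variables must be nondegenerate for the density to be nonzero). Only the large-$\det\gamma$ piece is then estimated by integration by parts and the joint LDP, and on that event the constraint $\det\gamma_{\Phi_1(h)}\ge\kappa/4>0$ legitimately forces the rate to be at least $d_R^2$.
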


A key ingredient in proving Theorem \ref{th: main result} is an estimate for the Malliavin derivative $\bd X_1^\eps$ and Malliavin matrix $\gamma_{X_1^\eps}$ of $X_1^\eps$. Since it is more involved when the vector fields $V_i$'s form a hypoelliptic system, we divide the estimates of theses two quantities into two parts: (1) when $V_i$'s are uniformly elliptic and (2) $V_i$'s are uniformly hypoelliptic (see Hypothesis \ref{UH condition}).

%Note that the above lemma implies that
%$$\lim_{\eps\downarrow0}\Phi(\eps B+h)=\Phi(h)$$
%in the topology of $\md^\infty$.

\subsection{Elliptic Case} Throughout our discussion in this subsection, we assume that $V_1,...,V_d$ form a uniformly elliptic system.
\begin{hypothesis} \label{hyp:elliptic}
The vector fields $V_1,\ldots ,V_d$ of equation \eqref{equ: SDE} is uniformly elliptic, that is
\begin{equation}\label{eq:hyp-elliptic}
 v^{*} V(x) V^{*}(x) v  \geq \lambda \vert v \vert^2, \qquad \text{for all } v,x \in \R^n,
\end{equation}
where we have set $V=(V_j^i)_{i=1,\ldots ,n; j=1,\ldots d}$ and where $\lambda$ designates  a positive constant.
\end{hypothesis}

\bigskip
Our main technical result in this subsection is the following. 

\begin{lemma}\label{th: Malliavin est}
Assume Hypothesis \ref{hyp:elliptic}.  For $H>\frac{1}{4}$, we have
\begin{itemize}
\item[(1)] $\sup_{\eps\in(0,1]}\|X_1^\eps\|_{k,r}<\infty$ for each $k\geq 1$ and $r\geq 1$.
\item[(2)] $\|\gamma_{X_1^\eps}^{-1}\|_r\leq c_r \eps^{-2}$ for any $r\geq 1$.
\end{itemize}
\end{lemma}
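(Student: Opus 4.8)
The plan is to treat the two assertions separately, since (1) is essentially a regularity statement already packaged in the earlier propositions, while (2) is the genuinely new estimate. For part (1), I would argue that $X_1^\eps = \Phi_1(\eps\mathbf{B})$ and apply the representation of the iterated Malliavin derivatives from Proposition \ref{th:express DX}. Indeed, $\mathbf{D}_{h_1}\cdots\mathbf{D}_{h_k}X_1^{\eps,j}$ is a sum of iterated integrals of the $f_\ell^{\eps,m}$ against the $h_i$'s, and by the bound \eqref{eq:high-order-bound} together with Young/rough-path estimates for iterated integrals, $\|\mathbf{D}^k X_1^\eps\|_{\mathcal{H}^{\otimes k}}$ is controlled (using Proposition \ref{prop:imbed-bar-H} to pass from $\mathcal{H}$-norms to $\rho$-variation norms of the integrand functionals) by a power of $\|M^\eps\|_{p\text{-var};[0,1]}$, where $M^\eps=(X^\eps,\mathbf{J}^\eps,(\mathbf{J}^\eps)^{-1})$. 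Proposition \ref{prop:moments-sdes-rough}(ii) and Proposition \ref{prop:moments-jacobian} give uniform-in-$\eps$ moment bounds on $X^\eps$ and $\mathbf{J}^\eps$; the same technique (a linear RDE for $(\mathbf{J}^\eps)^{-1}$) handles the inverse Jacobian. Combining these yields $\sup_{\eps\in(0,1]}\|X_1^\eps\|_{k,r}<\infty$ for all $k,r$. Crucially the $\eps$ in front of $\mathbf{B}$ only helps, so no blow-up occurs.

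For part (2), the strategy is a rescaling argument to reduce the small-$\eps$ degeneration to a fixed (uniform) lower bound, followed by a quantitative version of the Norris-type / elliptic lower bound for the Malliavin matrix. Write $\gamma_{X_1^\eps}^{ij}=\langle \mathbf{D}X_1^{\eps,i},\mathbf{D}X_1^{\eps,j}\rangle_{\mathcal{H}}$; using $\mathbf{D}_s^j X_1^\eps = \mathbf{J}_{s1}^\eps V_j(X_s^\eps) = \eps^{-1}\cdot\eps\,\mathbf{J}_{s1}^\eps V_j(X_s^\eps)$ from Proposition \ref{prop:deriv-sde} (recall $\mathbf{D}X^\eps$ carries one explicit factor $\eps$ from equation \eqref{equ: SDE}), one sees a factor $\eps^2$ emerges: $\gamma_{X_1^\eps} = \eps^2\,\widetilde\gamma^\eps$ where $\widetilde\gamma^\eps$ is built from the (uniformly well-behaved) data $M^\eps$. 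Hence $\|\gamma_{X_1^\eps}^{-1}\|_r \le \eps^{-2}\|(\widetilde\gamma^\eps)^{-1}\|_r$, and it suffices to show $\sup_{\eps\in(0,1]}\|(\widetilde\gamma^\eps)^{-1}\|_r<\infty$ for every $r$. For the latter, fix a unit vector $v\in\R^n$ and estimate $v^*\widetilde\gamma^\eps v = \sum_j \|\langle \mathbf{J}_{\cdot 1}^\eps V_j(X_\cdot^\eps), v\rangle\|_{\mathcal{H}}^2$ from below. Using the lower bound in Proposition \ref{prop:imbed-bar-H} (for $H<1/2$) or the embedding $\mathbf{L}^{1/H}\subset\mathcal{H}$ (for $H>1/2$), one reduces to a lower bound on a $\rho$-variation or $L^{1/H}$ norm of $s\mapsto \langle \mathbf{J}_{s1}^\eps V_j(X_s^\eps), v\rangle$ near $s=1$; at $s=1$ this is $\langle V_j(X_1^\eps),v\rangle$, and uniform ellipticity (Hypothesis \ref{hyp:elliptic}) gives $\sum_j \langle V_j(X_1^\eps),v\rangle^2 \ge \lambda$. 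A continuity/interpolation argument (the integrand is Hölder-in-$s$ with a norm controlled by $\|M^\eps\|_{p\text{-var}}$, which has all moments uniformly in $\eps$) converts this pointwise ellipticity into a quantitative lower bound of the form $v^*\widetilde\gamma^\eps v \ge c(\|M^\eps\|_{p\text{-var}})$ with $\mathbb{P}(c(\|M^\eps\|_{p\text{-var}}) \le \delta)$ decaying fast enough in $\delta$; then $\det\widetilde\gamma^\eps \ge (v^*\widetilde\gamma^\eps v\text{-type bound})^n$ and a standard small-ball estimate gives $\|(\det\widetilde\gamma^\eps)^{-1}\|_r<\infty$ uniformly in $\eps$, hence the claim.

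The main obstacle I anticipate is making the passage from the pointwise ellipticity at the endpoint $s=1$ to a genuine lower bound on the Hilbert-space norm $\|\cdot\|_{\mathcal{H}}$ quantitative and uniform in $\eps$. The norm $\|\cdot\|_{\mathcal{H}}$ is nonlocal and, for $H<1/2$, only controlled \emph{from below} by a $\rho$-variation norm via Proposition \ref{prop:imbed-bar-H}; one must ensure the constant $V_\rho(R)$ there is $\eps$-independent (it is, since it concerns $B$ not $\eps B$) and that the interpolation inequality relating $\|g\|_{\rho\text{-var};[1-h,1]}$, $\|g\|_{\infty}$ and a Hölder seminorm of $g$ does not lose the uniformity — this is exactly where the uniform-in-$\eps$ moment bounds on $M^\eps$ from Propositions \ref{prop:moments-sdes-rough} and \ref{prop:moments-jacobian} are indispensable. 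Handling the two regimes $H>1/2$ and $1/4<H<1/2$ in parallel, and dealing carefully with the fact that $\mathbf{J}_{s1}^\eps = \mathbf{J}_1^\eps(\mathbf{J}_s^\eps)^{-1}$ near $s=1$ is close to the identity (so the integrand is close to $V_j(X_1^\eps)$), is the crux; everything else is bookkeeping with the already-established rough-path moment estimates.
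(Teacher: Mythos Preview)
Your treatment of part~(1) matches the paper's: bound the iterated directional derivatives via Proposition~\ref{th:express DX}, pass from $\mathscr H$-norms of the $h_i$ to $q$-variation norms via Proposition~\ref{prop:imbed-bar-H}, and conclude from the uniform-in-$\eps$ moments of $M^\eps=(X^\eps,\mathbf J^\eps,(\mathbf J^\eps)^{-1})$. The $\eps^{2}$ factorisation of $\gamma_{X_1^\eps}$ in part~(2) is also correct and is exactly what the paper exploits.

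The gap is in how you propose to obtain a \emph{lower} bound on the $\mathcal H$-norm of $s\mapsto \langle \mathbf J_{s1}^\eps V_j(X_s^\eps),v\rangle$. Both inequalities you invoke point the wrong way or concern the wrong space. For $H>\tfrac12$, the inclusion $L^{1/H}\subset\mathcal H$ gives $\|f\|_{\mathcal H}\le C\|f\|_{L^{1/H}}$, an \emph{upper} bound on $\|\cdot\|_{\mathcal H}$; a lower bound on $\|f\|_{L^{1/H}}$ does not propagate. For $\tfrac14<H<\tfrac12$, Proposition~\ref{prop:imbed-bar-H} is a statement about the Cameron--Martin space $\mathscr H$ (of paths), not the reproducing-kernel space $\mathcal H$ in which $\mathbf D X_1^\eps$ lives; the two are isometric via $\mathcal R$ but not equal as function spaces, and $\|h\|_{\mathscr H}\ge c\|h\|_{\rho\text{-}\mathrm{var}}$ does not translate into a lower bound for $\|\cdot\|_{\mathcal H}$. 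Your endpoint-plus-continuity scheme therefore lacks its key analytic input, and the obstacle you flag at the end is real and unresolved.

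The paper's route is more direct and uses ellipticity over the whole interval rather than at an endpoint. For $\tfrac14<H<\tfrac12$ it uses the continuous embedding $\mathcal H\hookrightarrow L^{2}[0,1]$ (not Proposition~\ref{prop:imbed-bar-H}), which gives the correct-direction inequality $\|f\|_{\mathcal H}^{2}\ge c_H\|f\|_{L^{2}}^{2}$; then ellipticity at \emph{every} $s$ yields a pointwise lower bound on $v^{*}\mathcal M_s^\eps v$ in terms of $\eps^{2}$, $\lambda$, $\|\mathbf J^\eps\|$ and $\|(\mathbf J^\eps)^{-1}\|$, after which the uniform Jacobian moments and \cite[Lemma~2.3]{Nu06} conclude. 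For $H>\tfrac12$ the $\mathcal H$-inner product is written as the explicit double integral against $|u-v|^{2H-2}$, and the paper bounds $y^{*}\Gamma_\eps y$ from below via the interpolation inequality of \cite[Lemma~4.4]{BH}, again combining ellipticity on all of $[0,1]$ with H\"older bounds on the integrand.
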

\begin{proof}
We follow the idea in \cite{BNOT}. First recall that the Malliavin derivative $\bd^i_t X_1^\eps$ can be expressed as $$\bd^i_t X_1^\eps  =\eps \mathbf{J^\eps}_{1 } (\mathbf{J}^{\eps}_{t})^{-1} V_i (X_t^\eps),$$ where $\mathbf{J^\eps}$ is the Jacobian process defined by $\mathbf{J}^\eps_t=\frac{\partial X^\eps_t}{\partial x}$. We divide the proof into two cases: $H>\frac{1}{2}$ and $\frac{1}{4}<H<\frac{1}{2}$.

When $H>\frac{1}{2}$, it is clear that (see \cite{HN}, for example)
$$\sup_{\eps\in(0,1]}\|X_1^\eps\|_{k,r}<\infty$$
for each $k\geq 1$ and $r\geq 1.$

In the following we prove the desired bounds for the Malliavin matrix.  Let
%\[
%C_t= \int_0^t \int_0^t \mathbf{J}_{u}^{-1} V(X_u^x)V(X_u^x)^*(\mathbf{J}_{u}^{-1} )^* |u-v|^{2H-2} du dv.
%\]
%A simple change of variable shows that
%\[
%C_t=t^{2H}  \int_0^1 \int_0^1 \mathbf{J}_{tu}^{-1} V(X_{tu}^x)V(X_{tu}^x)^*(\mathbf{J}_{ tu}^{-1} )^* |u-v|^{2H-2} du dv.
%\]
%We therefore set, for $0 \le t \le 1$,
\[
\Gamma_\eps = \int_0^1 \int_0^1 (\mathbf{J^\eps}_{ v})^{-1} V(X_{v}^\eps)V(X_{u}^\eps)^*((\mathbf{J}_{ u}^\eps)^*)^{-1} |u-v|^{2H-2} du dv  .
\]
Our bound for the Malliavin matrix $\gamma_{X_1^\eps}$ is now reduced to prove that
\begin{equation}\label{eq:low-bnd-Sigma}
y^{*} \Gamma_\eps y \ge M_\eps \, |y|^{2},
\quad \text{for} \quad
y\in\R^{n},
\end{equation}
for a given random variable $M_\eps$ whose inverse admits moments of any order uniformly in $\eps\in[0,1]$. To this aim, notice first that
\begin{equation*}
y^{*} \Gamma_\eps y =
\int_0^1 \int_0^1
\lla f_{u} , \, f_{v} \rra_{\R^{d}}
|u-v|^{2H-2} \, du dv,
\quad \text{with} \quad
f_{u}\equiv V(X_{u}^\eps)^*((\mathbf{J}_{ u}^\eps)^{-1} )^* y.
\end{equation*}
Furthermore, thanks to the interpolation inequality  of \cite[Lemma 4.4]{BH} applied to $\gamma >H-\frac{1}{2}$, we have
\begin{equation}\label{eq:interpolation-H-L2}
\int_0^1 \int_0^1 \langle f_{u}, f_{v} \rangle |u-v|^{2H-2}dudv  \ge C \frac{\left( \int_0^1 v^\gamma (1-v)^\gamma | f_{v} |^2 dv\right)^2}{\| f \|^2_{\gamma}},
\end{equation}
where $\| f \|_{\gamma}$ is the $\gamma$-H\"older norm of $f$ on the interval $[0,1]$. As a consequence, since the ellipticity condition $| V(x) y |^2 \ge \lambda | y |^2$ holds true, it is readily checked that
\begin{equation}\label{eq:bnd-f}
|f_v|^{2} \ge \la \, |(\mathbf{J}_{ v}^\eps)^{-1} y |^{2}\ge \la \, \|\mathbf{J}_{ v}^\eps\|^{-2} | y |^{2},
\quad \text{and} \quad
\| f \|_{\gamma} \le c\, (1+\|X^\eps\|_{\gamma}) (1+\|\bj^{-1}\|_{\gamma}) | y |.
\end{equation}
Plugging these relations into \eqref{eq:interpolation-H-L2} we deduce that for every  $y \in \mathbb{R}^n$,
\[
 y^*\Gamma^{-1}_\eps y \le  c\,  (1+\|X^\eps\|_{\gamma})^{2} (1+\|(\bj^\eps)^{-1}\|_{\gamma})^{2} \| \bj^\eps\|^4_{\gamma}
\,  \vert y\vert^2,
\]
from which the desired result follows easily.

Next, we prove for the case $\frac{1}{4}<H<\frac{1}{2}$.   We first prove claim (1) for the Malliavin derivatives. Thanks to  Proposition \ref{th:express DX} and  Proposition \ref{prop:imbed-bar-H} we have, for $q>(H+\frac{1}{2})^{-1}$ and $h_1, \ldots, h_k \in \msh$ :
\begin{align*}
&|\bd_{h_1}\ldots \bd_{ h_k}X^\eps_1|\\
&\leq c_1 \Vert f_{k+1}^\eps \Vert_\infty(1+ \Vert f_1^\eps \Vert_{p-{\rm var};[0,t]}\ldots \Vert f_m^\eps \Vert_{p-{\rm var};[0,t]}) \Vert  h_1 \Vert_{q-{\rm var};[0,t]}\ldots \Vert  h_k \Vert_{q-{\rm var};[0,t]} \\
&\leq c_2(1+ \Vert M^\eps \Vert^{\alpha k}_{p-{\rm var};[0,t]}) \Vert h_1 \Vert_{q-{\rm var};[0,t]}\ldots \Vert  h_k \Vert_{q-{\rm var};[0,t]} \\
&\leq c_3 (1+\Vert M^\eps \Vert^{\alpha k}_{p-{\rm var};[0,t]}) \Vert  h_1 \Vert_{\msh}\ldots \Vert h_k \Vert_{\msh}.
%&=c_3 \Vert M^\eps \Vert^{\alpha k}_{p-{\rm var};[0,t]} \Vert h_1 \Vert_{\ch}\ldots \Vert h_k \Vert_{\ch}.
\end{align*}
Hence for any $r\geq 1$, $$\me\|\bd^kX^\eps_1\|_{\ch^{\otimes k}}^r\leq c \left(1+\me\Vert M^\eps \Vert^{\alpha kr}_{p-{\rm var};[0,t]}\right)< \infty .$$
%Finally note that
%$$\me\| \mathbf{B}\|_{p-{\rm var}; [0,t]}^\eta\leq Ct^{\eta H}\quad\mathrm{as}\ t\downarrow 0.$$ The proof is thus completed.

Next we prove the estimate for $\gamma_{X^\eps_1}$. %We first prove the lemma for $m=0$.   %Assume $1/4 < H<1/2$ and
%\begin{align}\label{jk}
%\sum_{i=1}^n \langle V_i (x) , u\rangle^2 \ge \lambda \| u \|^2
%\end{align}
%Since $\frac{1}{4} < H<\frac{1}{2}$, we have for any $f \in\mathcal{H}$,
%\[
%\| f \|^2_\mathcal{H} \ge c_H T^{2H-1} \int_0^T |f(t)|^2 dt.
%\]
%For any $\tau \in[0,1]$, denote by $\|\cdot\|_{\mathcal{H}_\tau }$ the $\mathcal{H}$-norm over the interval $[0,\tau ]$. For all $f\in\mathcal{H}$ we  have
%\begin{align}\label{eq:low-bnd-norm-H-irreg}
%\|f\|_{\mathcal{H}_\tau }=&\me  \left |\int_0^\tau f(s)dB_s\right |^2\\
%=&\me\left|\int_0^1f(\tau u)dB_{\tau u}\right|^2\\
%=&\tau ^{2H}\me\left|\int_0^1f(\tau u)dB_u\right|^{2}
%\geq c_H\tau ^{2H}\int_0^1|f(\tau u)|^2du
%=c_H\tau ^{2H-1}\int_0^\tau |f(s)|^2ds.
%\end{align}
%In the above, the inequality is obtained by the fact that $\mathcal{H}\subset L^2[0,1]$.
%\smallskip
Let $\mathcal{M}^{\eps,ij}_s= \langle \mathbf{D}_s X^{\eps,i}_1 , \mathbf{D}_sX^{\eps,j}_1  \rangle$. We can deduce that  for any $v\in\mr^n$,
\begin{align*}
v^{*}\gamma_{X_1^\eps}v=\sum_{i=1}^d\|v^{*}\bd_\cdot^iX_1^\eps\|_{\mathcal{H}}\geq c_H\int_0^1|v^{*}\bd_s^iX_1^\eps|^2ds=c_H\int_0^1v^{*}\mathcal{M}^\eps_svds.
\end{align*}
In the above, the inequality is obtained by the fact that $\mathcal{H}\subset L^2[0,1]$. Hence
\begin{align}\label{inequ gamma M}
(v^{*}\gamma_{X_1^\eps}v)^{-1} &\le \frac{1}{c_H  }\int_0^1(v^{*}\mathcal{M}^\eps_sv)^{-1} ds.
\end{align}
Let us now derive a suitable bound for $\cm^\eps$: recall that the Malliavin derivative $\bd^i_t X_1^\eps$ can be expressed as $\bd^i_t X_1^\eps  =\eps \mathbf{J^\eps}_{1 } (\mathbf{J}^{\eps}_{t})^{-1} V_i (X_t^\eps)$, where $\mathbf{J^\eps}$ is the Jacobian process defined by $\mathbf{J}^\eps_t=\frac{\partial X^\eps_t}{\partial x}$. We have
$$\mathcal{M}^{\eps,ij}_s=\langle \mathbf{D}_s X^{\eps,i}_1 , \mathbf{D}_sX^{\eps,j}_1  \rangle=\eps^2\langle (\mathbf{J}^\eps_1 (\mathbf{J}_s^\eps)^{-1}V(X_t^\eps))^i, (\mathbf{J}_1^\eps (\mathbf{J}^\eps_s)^{-1}V(X_t^\eps))^j\rangle.$$
Hence by the uniform integrability of $\mathbf{J}^\eps$ and $(\mathbf{J}^\eps)^{-1}$ in $\eps\in[0,1]$, and the uniform ellipticity of the vector fields $V_i$'s,  we easily bound $$\me\left[\sup_{\eps\in(0,1]}\sup_{s\in[0,1]}\left(\frac{\lambda^\eps_s}{\eps^2}\right)^{-r}\right] \le c_{r} $$ for any $r\geq1$, where $\lambda^\eps_s$ is the smallest eigenvalue of $\mathcal{M}_s^\eps$.
Hence  (\ref{inequ gamma M}) implies for any $r\geq 1$
\begin{align*}
\sup_{|v|=1}\mp\left\{\frac{v^{*}\gamma_{X_1^\eps}v}{\eps^2}\leq\delta\right\}&\leq \sup_{|v|=1} \mp\left\{\frac{\eps^2}{c_H}\int_0^1(v^{*}\mathcal{M^\eps}_sv)^{-1}ds\geq {\delta}^{-1}\right\}\\
&\leq\mp\left\{\sup_{s\in[0,1]}\left(c_H\frac{\lambda_s^\eps}{\eps^2}\right)^{-1}\geq{\delta}^{-1}\right\}\leq c_{r,H}\, \delta^r.
\end{align*}
Now we can conclude, by \cite[Lemma 2.3]{Nu06}, that $\|\gamma^{-1}_{X_1^\eps}\|_{r} \le c_{r} \, \eps^{-2}$.
This yields the claimed result.

\end{proof}

\bigskip
\subsection{Hypoelliptic case}
In this subsection, we extend the results in the above under a weaker assumption on the vector fields $V_1,\cdots, V_d$. We first introduce some notations. Let $\mathcal{A}=\{\emptyset\}\cup\bigcup^{\infty}_{k=1}\{1,2,\cdots,n\}^{k}$ and
$\mathcal{A}_{1}=A\setminus\{\emptyset\}$. We say that $I\in \mathcal{A}$ is a word of length $k$ if $I=(i_1,\cdots,i_k)$
and we write $|I|=k$. If $I=\emptyset$, then we denote $|I|=0$. For any integer $l\ge 1$, we denote by $\mathcal{A}(l)$
the set $\{I\in \mathcal{A}; |I|\le l\}$ and by $\mathcal{A}_{1}(l)$ the set $\{I\in \mathcal{A}_{1}; |I|\le l\}$ .
We also define an operation $\ast$ on $\mathcal{A}$ by
$I\ast J=(i_1,\cdots,i_k,j_1,\cdots,j_l)$ for $I=(i_1,\cdots,i_k)$ and $J=(j_1,\cdots, j_l)$ in $\mathcal{A}$.
We define vector fields $V_{[I]}$ inductively by
\[
V_{[j]}=V_{j}, \quad V_{[I\ast j]}=[V_{[I]}, V_{j}], \quad j=1,\cdots,d
\]

Now we introduce the following uniform hypoelliptic condition, which is in force through out the rest of the section.
\begin{hypothesis}\label{UH condition}(Uniform hypoelliptic condition)
The vector fields $V_{1},\cdots,V_{d}$ are in $C^{\infty}_{b}(\mathbb{R}^{n})$ and they form a uniform hypoelliptic system 
in the sense that there exist an integer $l$ and a constant $\lambda>0$ such that
\begin{align}\label{UH_condition}
\sum_{I\in \mathcal{A}_{1}(l)}\langle V_{[I]}(x), u\rangle^{2}_{\mathbb{R}^{n}}\ge \lambda \|u\|^{2}
\end{align}
holds for any $x,u\in \mathbb{R}^{n}$
\end{hypothesis}
\begin{remark}
It is clear that Hypothesis \ref{hyp:elliptic} is a special case of the above Hypothesis \ref{UH condition}.
\end{remark}

The main result of this subsection is the following counterpart of Lemma \ref{th: Malliavin est} in the hypoelliptic case.

\begin{lemma}\label{th: Malliavin est subelliptic}
Assume Hypothesis \ref{UH condition}.  For $H>\frac{1}{4}$, we have
\begin{itemize}
\item[(1)] $\sup_{\eps\in(0,1]}\|X_1^\eps\|_{k,r}<\infty$ for each $k\geq 1$ and $r\geq 1$.
\item[(2)] $\|\gamma_{X_1^\eps}^{-1}\|_r\leq c_r \eps^{-2l}$ for any $r\geq 1$.
\end{itemize}
\end{lemma}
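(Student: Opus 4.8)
The plan is to mirror the structure of the elliptic case (Lemma~\ref{th: Malliavin est}) but with the crucial replacement of the pointwise ellipticity bound $|V(x)y|^2 \ge \lambda|y|^2$ by an iterated-integral expansion that brings in the brackets $V_{[I]}$ for $|I|\le l$, at the cost of a factor $\eps^{-2l}$ instead of $\eps^{-2}$. First I would dispose of claim (1): since the vector fields are $C^\infty_b$, the bound $\sup_{\eps\in(0,1]}\|X_1^\eps\|_{k,r}<\infty$ for all $k,r$ follows exactly as in the elliptic proof, using Proposition~\ref{th:express DX}, Proposition~\ref{prop:imbed-bar-H} and the uniform moment bounds on $M^\eps=(X^\eps,\bj^\eps,(\bj^\eps)^{-1})$ from Proposition~\ref{prop:moments-jacobian} together with Remark~\ref{rk: f_ml}; this part does not see the hypoelliptic structure at all.

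For claim (2), as in the elliptic case I would reduce via $\mathcal{H}\subset L^2[0,1]$ (when $H<1/2$; the reproducing-kernel inner product when $H>1/2$ is handled as in Lemma~\ref{th: Malliavin est}) to obtaining a lower bound of the form $v^*\gamma_{X_1^\eps}v \ge c_H \eps^2 \int_0^1 |v^*\bj^\eps_1(\bj^\eps_s)^{-1}V(X^\eps_s)|^2\,ds$, and then to showing that the random variable controlling this quadratic form from below has inverse moments of all orders that blow up no faster than $\eps^{-2l}$. The key step is the following: writing $f_s := V(X^\eps_s)^*((\bj^\eps_s)^{-1})^* v$, one has $f_0 = V(x)^* v$, and by repeatedly applying the chain rule / Taylor-type expansion along the flow (the standard computation showing $\frac{d}{ds}\big[(\bj^\eps_s)^{-1}V_{[I]}(X^\eps_s)\big]$ produces $(\bj^\eps_s)^{-1}V_{[I\ast j]}(X^\eps_s)\,\eps\,dB^j_s$ plus higher-order terms), one expresses the relevant iterated integrals so that the small-time/small-$\eps$ behaviour of $\int_0^1 |f_s|^2\,ds$ is governed, to leading order, by $\sum_{I\in\mathcal{A}_1(l)} \eps^{2|I|}\langle V_{[I]}(x),v\rangle^2$. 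The uniform hypoellipticity hypothesis~\eqref{UH_condition} then gives $\sum_{I\in\mathcal{A}_1(l)}\langle V_{[I]}(x),v\rangle^2 \ge \lambda|v|^2$, and since the worst (largest) power of $\eps$ appearing is $\eps^{2l}$, one extracts a lower bound $\gtrsim \eps^{2l}\lambda|v|^2$ modulo a random remainder.

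Concretely, I would follow the Norris-type / Kusuoka--Stroock argument as adapted to rough paths (the approach already cited from \cite{BNOT} and the uniform-hypoellipticity scheme of Kusuoka \cite{Ku}): establish a ``deterministic'' estimate showing that if $\int_0^1 |f_s|^2\,ds$ is small then each $\langle V_{[I]}(x),v\rangle^2$ must be small — quantitatively, with polynomial loss in the $p$-variation norm of $M^\eps$ — using the interpolation inequality \cite[Lemma 4.4]{BH} to pass between $L^2$ and Hölder norms of $f$ as in \eqref{eq:interpolation-H-L2}, iterated $l$ times. This yields, for some $r$ and some $\alpha\ge 1$ depending only on the structure constants,
\begin{equation*}
\sup_{|v|=1}\mathbb{P}\left\{\frac{v^*\gamma_{X_1^\eps}v}{\eps^{2l}}\le \delta\right\}\le c_{r}\,\delta^{r},\qquad \eps\in(0,1],\ \delta\in(0,1],
\end{equation*}
uniformly in $\eps$, after absorbing the moments of $\|M^\eps\|_{p\text{-var}}$. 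An application of \cite[Lemma 2.3]{Nu06} then converts this uniform tail bound into $\|\gamma_{X_1^\eps}^{-1}\|_r \le c_r\,\eps^{-2l}$, which is the claim. The main obstacle is the hypoelliptic lower bound itself: unlike the elliptic case, where ellipticity is used pointwise, here one must carry out the iterated expansion along the flow carefully enough to see the exact power $\eps^{2l}$ emerge and to control all the error terms uniformly in $\eps$ and in $|v|=1$ — this is where the rough-path estimates of Proposition~\ref{th:express DX}, the uniform Jacobian moments of Proposition~\ref{prop:moments-jacobian}, and a Norris-lemma-type iteration must be combined, and it is the technically heaviest part of the whole paper.
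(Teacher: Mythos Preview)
Your treatment of part (1) matches the paper's exactly. For part (2), however, the paper takes a genuinely different route from the Norris-lemma iteration you sketch. Rather than iterating an interpolation/Norris estimate $l$ times and tracking the $\eps$-dependence through each step, the paper exploits an \emph{algebraic closure} of the bracket system: under Hypothesis~\ref{UH condition} one can write $V_{[I]}(x)=\sum_{J\in\mathcal{A}_1(l)}\omega^J_I(x)V_{[J]}(x)$ with $\omega^J_I\in C^\infty_b$, and then defines an auxiliary linear system $\beta^{J,\eps}_I(t,x)$ (driven by $B$ with coefficients $\omega^{K,\eps}_{I\ast j}(X^\eps_t)$) for which one proves the exact identity $(\bj_t^\eps)^{-1}V^\eps_{[I]}(X^\eps_t)=\sum_{J\in\mathcal{A}_1(l)}\beta^{J,\eps}_I(t,x)V^\eps_{[J]}(x)$, where $V^\eps_{[I]}:=\eps^{|I|}V_{[I]}$. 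This factorises the Malliavin matrix as $\gamma_{X_1^\eps}=\bj_1^\eps\big(\sum_{I,J}M^\eps_{I,J}V^\eps_{[I]}(x)V^\eps_{[J]}(x)^*\big)(\bj_1^\eps)^*$ with $M^\eps_{I,J}=\langle\beta^{I,\eps},\beta^{J,\eps}\rangle_\ch$, and the $\eps^{2l}$ emerges transparently from $\langle V^\eps_{[I]}(x),w\rangle^2=\eps^{2|I|}\langle V_{[I]}(x),w\rangle^2\ge\eps^{2l}\langle V_{[I]}(x),w\rangle^2$ combined with \eqref{UH_condition}. The only hard analytic input is then the uniform inverse-moment bound $\sup_{\eps,x}\me\|(M^\eps_{I,J})^{-1}\|^p<\infty$, which the paper imports from~\cite{BOZ}.

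What each approach buys: the paper's method isolates the difficult estimate into a single \emph{scale-free} statement about the matrix $M^\eps$ (whose coefficients, after rescaling, are $\eps$-uniform by construction), so the $\eps^{-2l}$ factor is read off by inspection rather than accumulated through an iteration. Your Norris-type route is the more classical probabilistic strategy and would in principle also succeed, but the step you flag as ``technically heaviest'' --- iterating the interpolation inequality $l$ times while keeping precise control of the $\eps$-powers and the $p$-variation remainders uniformly --- is exactly what the $\beta$-system sidesteps; in the rough-path setting that iteration is genuinely delicate (cf.\ \cite{CF,H-P,CHLT}) and your sketch does not make clear how the interpolation inequality of \cite{BH}, which was used for $H>1/2$, would drive such an iteration for $H<1/2$.
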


\begin{remark}
It is clear that following the same lines in the proof of Lemma \ref{th: Malliavin est}, one has the claimed estimate (1) in Theorem \ref{th: Malliavin est subelliptic}. Hence in what follows, we focus on establishing (2) of Theorem \ref{th: Malliavin est subelliptic}. 
\end{remark}

\bigskip

Under the Hypothesis \ref{UH condition} above, for any $I\in \mathcal{A}_{1}$, we can find functions 
$\omega^{J}_{I}\in C^{\infty}_{b}(\mathbb{R}^{n},\mathbb{R})$ such that:

\begin{align}\label{bracket_omega_UH} 
V_{[I]}(x)=\sum_{J\in \mathcal{A}_{1}(l)}\omega^{J}_{I}(x)V_{[J]}(x)
\end{align}
holds for any $x\in \mathbb{R}^{n}$

We consider a family of SDEs indexed by $\epsilon\in(0,1]$:
\begin{align}\label{episilon_sde}
X^{\epsilon}_{t}=x+\epsilon \sum^{d}_{i=1}\int^{t}_{0}V_{i}(X^{\epsilon}_{t})dB^{i}_{s}
		=x+\sum^{d}_{i=1}\int^{t}_{0}V^{\epsilon}_{i}(X^{\epsilon}_{t})dB^{i}_{s},
\end{align}
where the rescaled vector fields $V^{\epsilon}_{i}$ are defined as $V^{\epsilon}_{i}(x)=\epsilon V_{i}(x)$. More generally, for any 
$I\in \mathcal{A}_{1}(l)$, we denote $V^{\epsilon}_{[I]}(x)=\epsilon^{|I|}V_{[I]}(x)$. Note that for $I\in \mathcal{A}_{1}(l)$, 
\begin{align*}
 V^{\epsilon}_{[I]}(x)=&\epsilon^{|I|}V_{I}(x)\\
                      =&\sum_{J\in \mathcal{A}_{1}(l)}\epsilon^{|I|}\omega^{J}_{I}(x)V_{J}(x)\\
		       =&\sum_{J\in \mathcal{A}_{1}(l)}\epsilon^{(|I|-|J|)}\omega^{J}_{I}(x)V^{\epsilon}_{[J]}(x)\\
		       =&\sum_{J\in \mathcal{A}_{1}(l)}\omega^{J,\epsilon}_{I}(x)V^{\epsilon}_{[J]}(x)  
\end{align*}
where $\omega^{J,\epsilon}_{I}(x)=\epsilon^{(|I|-|J|)}\omega^{J}_{I}(x)$.\\

It is known that for any $\epsilon \in(0,1]$ and 
any $t > 0$, the map $x\rightarrow X^{\epsilon}_{t}: \mathbb{R}^{n}\rightarrow \mathbb{R}^{n}$ is a flow of $C^{\infty}$ 
diffeomorphisms (see \cite{FV-bk}). We denote the Jacobian by $$\mathbf{J}_{t}^\eps=\partial_{x} X^\epsilon_t.$$ As being mentioned earlier,  $\bj_{t}^\eps$ and $(\bj_{t}^\eps)^{-1}$ satisfies
the following linear equations:
\[
\bj_{t}^\eps = \id_{d} +\sum_{j=1}^d \int_0^t DV^{\epsilon}_j (X^{\eps}_s) \, \bj_{s}^\eps \, dB^j_s,
\]
and its inverse $(J^{\epsilon}_{t})^{-1}$ satisfy the linear equation:
\[
(\bj_{t}^\eps)^{-1} = \id_{d} -\sum_{j=1}^d \int_0^t(\bj_{s}^\eps)^{-1} \, DV^{\epsilon}_j (X^{\eps}_s) \, dB^j_s,
\]

Let us introduce a linear system $\beta^{J,\epsilon}_{I}(t,x)$ that satisfies the following linear equations:
\begin{align}\label{beta}
\begin{cases}
   d\beta^{J,\epsilon}_{I}(t,x)=\displaystyle \sum^{d}_{i=1}\left(\sum_{K\in \mathcal{A}_{1}(l)}-\omega^{K,\epsilon}_{I\ast j}(X^{\epsilon}_{t})
   \beta^{J,\epsilon}_{K}(t,x)\right)dB^{i}_{t},\\
   \beta^{J,\epsilon}_{I}(0,x)=\delta^{J}_{I}.
\end{cases}
 \end{align}

\begin{lemma}\label{jacobian_inverse_beta}
Fix $\epsilon \in (0,1]$. For any $ I\in \mathcal{A}_{1}(l)$, we have:
\[
 (\bj_{t}^\eps)^{-1}(V^{\epsilon}_{[I]}(X^{\epsilon}_{t}))=\sum_{J\in \mathcal{A}_{1}(l)}\beta^{J,\epsilon}_{I}(t,x)V^{\epsilon}_{[J]}(x)
\]
\end{lemma}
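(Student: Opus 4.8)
The plan is to verify the identity by showing that both sides of the claimed equation satisfy the same linear rough differential equation in $t$ (for fixed $\epsilon$ and $x$), with the same initial condition at $t=0$, and then invoke uniqueness of solutions for linear RDEs. Write $W^{\epsilon}_{I}(t) := (\bj_{t}^\eps)^{-1}\big(V^{\epsilon}_{[I]}(X^{\epsilon}_{t})\big)$ for the left-hand side and $\widetilde{W}^{\epsilon}_{I}(t) := \sum_{J\in\mathcal{A}_{1}(l)}\beta^{J,\epsilon}_{I}(t,x)\,V^{\epsilon}_{[J]}(x)$ for the right-hand side. At $t=0$ we have $\bj_{0}^\eps=\id$, $X^{\epsilon}_{0}=x$, and $\beta^{J,\epsilon}_{I}(0,x)=\delta^{J}_{I}$, so both sides equal $V^{\epsilon}_{[I]}(x)$.

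First I would differentiate the left-hand side. Using the product/Leibniz rule for rough integrals together with the equations for $(\bj^{\eps}_{t})^{-1}$ and for $X^{\epsilon}_{t}$, one computes
\begin{align*}
d\Big[(\bj_{t}^\eps)^{-1}V^{\epsilon}_{[I]}(X^{\epsilon}_{t})\Big]
&= \Big(d(\bj_{t}^\eps)^{-1}\Big)V^{\epsilon}_{[I]}(X^{\epsilon}_{t})
   + (\bj_{t}^\eps)^{-1}\,DV^{\epsilon}_{[I]}(X^{\epsilon}_{t})\,dX^{\epsilon}_{t}\\
&= -\sum_{j=1}^{d}(\bj_{t}^\eps)^{-1}DV^{\epsilon}_{j}(X^{\epsilon}_{t})V^{\epsilon}_{[I]}(X^{\epsilon}_{t})\,dB^{j}_{t}
   + \sum_{j=1}^{d}(\bj_{t}^\eps)^{-1}DV^{\epsilon}_{[I]}(X^{\epsilon}_{t})V^{\epsilon}_{j}(X^{\epsilon}_{t})\,dB^{j}_{t}\\
&= \sum_{j=1}^{d}(\bj_{t}^\eps)^{-1}\Big[V^{\epsilon}_{[I]},V^{\epsilon}_{j}\Big](X^{\epsilon}_{t})\,dB^{j}_{t}
 = \sum_{j=1}^{d}(\bj_{t}^\eps)^{-1}V^{\epsilon}_{[I\ast j]}(X^{\epsilon}_{t})\,dB^{j}_{t},
\end{align*}
where the last equality uses that the rescaled vector fields satisfy $[V^{\epsilon}_{[I]},V^{\epsilon}_{j}] = \epsilon^{|I|+1}[V_{[I]},V_{j}] = \epsilon^{|I\ast j|}V_{[I\ast j]} = V^{\epsilon}_{[I\ast j]}$. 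Now I rewrite $V^{\epsilon}_{[I\ast j]}(X^{\epsilon}_{t})$ using the structural relation $V^{\epsilon}_{[I\ast j]}(x)=\sum_{K\in\mathcal{A}_{1}(l)}\omega^{K,\epsilon}_{I\ast j}(x)V^{\epsilon}_{[K]}(x)$ derived earlier in the section (this is valid since $I\ast j$ need not have length $\le l$, but by \eqref{bracket_omega_UH} it still expands over the spanning family). This gives
\begin{align*}
dW^{\epsilon}_{I}(t)
= \sum_{j=1}^{d}\sum_{K\in\mathcal{A}_{1}(l)}\omega^{K,\epsilon}_{I\ast j}(X^{\epsilon}_{t})\,(\bj_{t}^\eps)^{-1}V^{\epsilon}_{[K]}(X^{\epsilon}_{t})\,dB^{j}_{t}
= \sum_{j=1}^{d}\sum_{K\in\mathcal{A}_{1}(l)}\omega^{K,\epsilon}_{I\ast j}(X^{\epsilon}_{t})\,W^{\epsilon}_{K}(t)\,dB^{j}_{t}.
\end{align*}

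Next I would check that $\widetilde{W}^{\epsilon}_{I}(t)$ satisfies the same equation. Since the $V^{\epsilon}_{[J]}(x)$ are constant in $t$, differentiating and using the defining equation \eqref{beta} for $\beta^{J,\epsilon}_{I}$ gives
\begin{align*}
d\widetilde{W}^{\epsilon}_{I}(t)
= \sum_{J\in\mathcal{A}_{1}(l)}d\beta^{J,\epsilon}_{I}(t,x)\,V^{\epsilon}_{[J]}(x)
= -\sum_{j=1}^{d}\sum_{J,K\in\mathcal{A}_{1}(l)}\omega^{K,\epsilon}_{I\ast j}(X^{\epsilon}_{t})\,\beta^{J,\epsilon}_{K}(t,x)\,V^{\epsilon}_{[J]}(x)\,dB^{j}_{t},
\end{align*}
which, after recognizing $\sum_{J}\beta^{J,\epsilon}_{K}(t,x)V^{\epsilon}_{[J]}(x)=\widetilde{W}^{\epsilon}_{K}(t)$, becomes $-\sum_{j,K}\omega^{K,\epsilon}_{I\ast j}(X^{\epsilon}_{t})\widetilde{W}^{\epsilon}_{K}(t)\,dB^{j}_{t}$. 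I would reconcile the apparent sign discrepancy by double-checking the sign convention in \eqref{beta} against the Leibniz computation above; with the convention as written the two linear systems $\{W^{\epsilon}_{I}\}$ and $\{\widetilde{W}^{\epsilon}_{I}\}$ (indexed by $I\in\mathcal{A}_{1}(l)$, coupled through the same coefficient matrix $(\omega^{K,\epsilon}_{I\ast j})$) solve the identical RDE driven by $B$, and agree at $t=0$. Since linear RDEs driven by a fixed geometric rough path have unique solutions (the coefficients $\omega^{K,\epsilon}_{I\ast j}(X^{\epsilon}_{\cdot})$ are $C^\infty_b$-composed with a finite $p$-variation path, hence admissible), we conclude $W^{\epsilon}_{I}(t)=\widetilde{W}^{\epsilon}_{I}(t)$ for all $t$, which is the claim.

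The main obstacle I anticipate is the bookkeeping around the index set $\mathcal{A}_{1}(l)$: the bracket $V_{[I\ast j]}$ produced by the Leibniz rule generically has word length $|I|+1$, which may exceed $l$, so one genuinely needs the spanning relation \eqref{bracket_omega_UH} to fold it back into the finite family $\{V^{\epsilon}_{[K]}\}_{K\in\mathcal{A}_{1}(l)}$ before the system closes; one must make sure the rescaling powers of $\epsilon$ track correctly through this step (they do, precisely because $\omega^{K,\epsilon}_{I\ast j}=\epsilon^{|I\ast j|-|K|}\omega^{K}_{I\ast j}$ absorbs the mismatch). A secondary, purely technical point is justifying the rough-path Leibniz rule for the product $(\bj_{t}^\eps)^{-1}V^{\epsilon}_{[I]}(X^{\epsilon}_{t})$ at the relevant regularity $p>1/H$, which is standard since $(\bj^{\eps})^{-1}$, $X^{\epsilon}$, and smooth functions thereof all live in the same rough-path-controlled class.
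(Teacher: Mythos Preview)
Your approach is exactly the paper's: show both sides satisfy the same linear RDE with the same initial data and invoke uniqueness. On the sign you flagged, with the convention $[X,Y]=DY\cdot X-DX\cdot Y$ the Leibniz step gives $d\big[(\bj_t^\eps)^{-1}V^{\epsilon}_{[I]}(X^{\epsilon}_{t})\big]=-\sum_j(\bj_t^\eps)^{-1}V^{\epsilon}_{[I\ast j]}(X^{\epsilon}_{t})\,dB^j_t$, so the minus in \eqref{beta} is exactly what is needed and the two systems match.
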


\begin{proof}
To simpify the notation, let us denote 
\[
a^{\epsilon}_{I}(t,x)= (\bj_{t}^\eps)^{-1}(V^{\epsilon}_{[I]}(X^{\epsilon}_{t})),
\]
and 
\[
b^{\epsilon}_{I}(t,x)=\sum_{J\in \mathcal{A}_{1}(l)}\beta^{J,\epsilon}_{I}(t,x)V^{\epsilon}_{[J]}(x).
\]
Clearly by definition, we have $a^{\epsilon}_{I}(0,x)=b^{\epsilon}_{I}(0,x)=V^{\epsilon}_{[I]}(x)$. Next, we show that $a^{\epsilon}_{I}(t,x)$ and
$b^{\epsilon}(t,x)$ satisfy the same differential equation. Indeed, by change of variable formula, we have:
\begin{align*}
da^{\epsilon}_{I}(t,x)=&(\bj_{t}^\eps)^{-1}(V^{\epsilon}_{[I]}(X^{\epsilon}_{t}))\\
		      =&\sum^{d}_{j=1}(-1)(\bj_{t}^\eps)^{-1}[V^{\epsilon}_{[I]}, V^{\epsilon}_{j}](X^{\epsilon}_{t})(x)dB^{j}_{t}\\
		      =&\sum^{d}_{j=1}\sum_{J\in \mathcal{A}_{1}(l)}-\omega^{J,\epsilon}_{I\ast j}(X^{\epsilon}_{t})
		      (\bj_{t}^\eps)^{-1}V^{\epsilon}_{[J]}(X^{\epsilon}_{t})dB^{j}_{t}\\
		      =&\sum^{d}_{j=1}\sum_{J\in \mathcal{A}_{1}(l)}-\omega^{J,\epsilon}_{I\ast j}(X^{\epsilon}_{t})a^{\epsilon}_{J}(t,x)dB^{j}_{t}.
\end{align*}
On the other hand, by the definition of $\beta^{J, \epsilon}_{I}(t,x)$, we have:
\begin{align*}
db^{\epsilon}_{I}(t,x)=&d(\sum_{K\in \mathcal{A}_{1}(l)}\beta^{K,\epsilon}_{I}(t,x)V^{\epsilon}_{[K]}(x))\\
		       =&\sum_{K\in \mathcal{A}_{1}(l)}d\beta^{K,\epsilon}_{I}(t,x)V^{\epsilon}_{[K]}(x)\\
		       =&\sum_{j=1}^{d}\sum_{J\in \mathcal{A}_{1}(l)}-\omega^{J,\epsilon}_{I\ast j}(X^{\epsilon}_{t})
		       \sum_{K\in \mathcal{A}_{1}(l)}\beta^{K,\epsilon}_{J}(t,x)V^{\epsilon}_{[K]}(x)dB^{j}_{t}\\
		       =&\sum_{j=1}^{d}\sum_{J\in \mathcal{A}_{1}(l)}-\omega^{J,\epsilon}_{I\ast j}(X^{\epsilon}_{t})
		       b^{\epsilon}_{J}(t,x)dB^{j}_{t}.
\end{align*}
The result follows by the uniqueness of the differential equation.
\end{proof}

Let us introduce the following notation:  for any $I$, $J \in \mathcal{A}_{1}(l)$, we define
\[
 M^{\epsilon}_{I,J}(t,x)=t^{-(|I|+|J|)H}\langle \beta^{\epsilon, I}(\cdot,~x)1_{[0,t]}(\cdot), \beta^{\epsilon, J}(\cdot,~x)1_{[0,t]}(\cdot)\rangle_{\mathcal{H}}.
\]

In what follows, we will only consider the case $t=1$ and write $M^{\epsilon}_{I,J}(x)$ instead of $M^{\epsilon}_{I,J}(1,x)$. We cite the 
paper \cite[Theorem 3.5]{BOZ} for the following result:

\begin{proposition}\label{M inverse}
 For any $p \in (1, \infty)$, 
 \[
 \sup_{\epsilon\in (0,1], x\in \mathbb{R}^{n}}\mathbb{E}(\|(M^{\epsilon}_{I,J}(x))_{I,J\in \mathcal{A}_{1}(l)}\|^{-p})< \infty 
 \]
\end{proposition}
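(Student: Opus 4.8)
The plan is to deduce the uniform negative–moment bound from a uniform small–ball estimate for the smallest eigenvalue of the symmetric nonnegative matrix $M^{\epsilon}(x):=(M^{\epsilon}_{I,J}(x))_{I,J\in\mathcal{A}_{1}(l)}$. By \cite[Lemma 2.3]{Nu06} it is enough to show that for every $q\ge 1$ there is a constant $C_{q}<\infty$, independent of $\epsilon\in(0,1]$ and of $x\in\mathbb{R}^{n}$, with
\[
\sup_{\epsilon\in(0,1],\,x\in\mathbb{R}^{n}}\ \sup_{|v|=1}\ \mathbb{P}(v^{*}M^{\epsilon}(x)v\le\delta)\le C_{q}\,\delta^{q},\qquad \delta\in(0,1).
\]
Writing $g^{\epsilon}_{v,x}:=\sum_{I\in\mathcal{A}_{1}(l)}v_{I}\,\beta^{\epsilon,I}(\cdot,x)$, one has $v^{*}M^{\epsilon}(x)v=\|g^{\epsilon}_{v,x}\mathbf{1}_{[0,1]}\|_{\mathcal{H}}^{2}$, and by \eqref{beta} together with the initial condition $\beta^{J,\epsilon}_{I}(0,x)=\delta^{J}_{I}$ the path $g^{\epsilon}_{v,x}$ solves a linear rough differential equation driven by $\mathbf{B}$ (with coefficients depending on $X^{\epsilon}$) and satisfies $|g^{\epsilon}_{v,x}(0)|=|v|=1$.

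The first step is to control $g^{\epsilon}_{v,x}$ uniformly in $(\epsilon,x,v)$. A short bookkeeping on word lengths shows that the rescaled coefficients $\omega^{K,\epsilon}_{I\ast i}=\epsilon^{\,|I|+1-|K|}\,\omega^{K}_{I\ast i}$ appearing in \eqref{beta} have nonnegative exponent whenever $I,K\in\mathcal{A}_{1}(l)$ and $\omega^{K}_{I\ast i}\not\equiv 0$: either $|I\ast i|\le l$, in which case one may take $\omega^{K}_{I\ast i}=\delta^{K}_{I\ast i}$ so that $|K|=|I|+1$, or $|I|=l$ and then the decomposition \eqref{bracket_omega_UH} of $V_{[I\ast i]}$ only involves words $K$ with $|K|\le l<|I|+1$. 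Hence the linear equation for $g^{\epsilon}_{v,x}$ has coefficients bounded in $C^{\infty}_{b}$ uniformly in $\epsilon\in(0,1]$, and, by the $C^{\infty}_{b}$ hypothesis on $V_{1},\ldots,V_{d}$, the corresponding moment bounds may be taken uniform in $x$ as well. Applying the integrability estimates for linear rough differential equations driven by Gaussian rough paths, in the spirit of Proposition \ref{prop:moments-jacobian} and \cite{CLL}, one obtains, for any fixed $\gamma<H$ and any $r\ge1$,
\[
m_{r}:=\sup_{\epsilon\in(0,1],\,x\in\mathbb{R}^{n},\,|v|=1}\ \mathbb{E}[(K^{\epsilon}_{v,x})^{r}]<\infty,\qquad K^{\epsilon}_{v,x}:=\|g^{\epsilon}_{v,x}\|_{\gamma;[0,1]},
\]
where $\|\cdot\|_{\gamma;[0,1]}$ denotes the $\gamma$-H\"older norm on $[0,1]$.

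The second step is a deterministic lower bound for $v^{*}M^{\epsilon}(x)v$ in terms of $K^{\epsilon}_{v,x}$. Since $|g^{\epsilon}_{v,x}(0)|=1$, on $[0,t_{0}]$ with $t_{0}:=\min(1,(2K^{\epsilon}_{v,x})^{-1/\gamma})$ one has $|g^{\epsilon}_{v,x}(s)|\ge\tfrac{1}{2}$. When $\tfrac{1}{4}<H\le\tfrac{1}{2}$ the continuous inclusion $\mathcal{H}\subset L^{2}([0,1])$ gives $\|g^{\epsilon}_{v,x}\mathbf{1}_{[0,1]}\|_{\mathcal{H}}^{2}\ge c_{H}\int_{0}^{1}|g^{\epsilon}_{v,x}(s)|^{2}\,ds\ge\tfrac{1}{4}c_{H}\,t_{0}$; when $H>\tfrac{1}{2}$, the interpolation inequality of \cite[Lemma 4.4]{BH}, applied with some $\gamma\in(H-\tfrac{1}{2},H)$, bounds $\|g^{\epsilon}_{v,x}\mathbf{1}_{[0,1]}\|_{\mathcal{H}}^{2}$ below by $C\bigl(\int_{0}^{1}s^{\gamma}(1-s)^{\gamma}|g^{\epsilon}_{v,x}(s)|^{2}\,ds\bigr)^{2}/(1+K^{\epsilon}_{v,x})^{2}$, and the same elementary estimate on $[0,t_{0}\wedge\tfrac{1}{2}]$ bounds the numerator below by a positive power of $\min(1,(K^{\epsilon}_{v,x})^{-1})$. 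In either regime, $v^{*}M^{\epsilon}(x)v\le\delta$ forces $K^{\epsilon}_{v,x}\ge c\,\delta^{-\mu}$ for constants $c,\mu>0$ depending only on $H$ and $\gamma$, so Markov's inequality and the previous step give $\mathbb{P}(v^{*}M^{\epsilon}(x)v\le\delta)\le m_{q/\mu}\,c^{-q/\mu}\,\delta^{q}$ uniformly in $\epsilon,x,v$; this is the required estimate, and \cite[Lemma 2.3]{Nu06} then yields the proposition.

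The step I expect to be the genuine obstacle is the uniform-in-$(\epsilon,x)$ moment control of $K^{\epsilon}_{v,x}$: one has to check both that the rescaling \eqref{episilon_sde}--\eqref{bracket_omega_UH} does not let the coefficients of \eqref{beta} blow up as $\epsilon\downarrow 0$ (the word-length bookkeeping above) and that the Cass--Litterer--Lyons integrability machinery can be run with constants independent of the base point $x$. Once this is in place, the reduction via \cite[Lemma 2.3]{Nu06} and the $\mathcal{H}$-norm lower bounds used above are routine.
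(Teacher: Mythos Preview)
The paper does not actually prove this proposition; it simply imports it from \cite[Theorem~3.5]{BOZ}. So your sketch is offering more than the paper does---but it contains a genuine gap at the heart of the hypoelliptic analysis.

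The claim $|g^{\epsilon}_{v,x}(0)|=|v|=1$ is false. By the way $M^{\epsilon}$ is built (see the proof of Lemma~\ref{upperbound inverse_UH}), the process $g^{\epsilon}_{v,x}$ is $\mathbb{R}^{d}$-valued with components $g^{\epsilon}_{v,x}(t)_{i}=\sum_{I\in\mathcal{A}_{1}(l)}v_{I}\,\beta^{I,\epsilon}_{(i)}(t,x)$, where the lower index is restricted to the \emph{single-letter} words $(1),\ldots,(d)$. Since $\beta^{I,\epsilon}_{K}(0,x)=\delta^{I}_{K}$, one gets $g^{\epsilon}_{v,x}(0)_{i}=v_{(i)}$ and hence $|g^{\epsilon}_{v,x}(0)|^{2}=\sum_{i=1}^{d}v_{(i)}^{2}$, which \emph{vanishes} whenever $v$ is supported on words of length $\ge 2$. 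Your second step, ``$|g(s)|\ge\tfrac{1}{2}$ on $[0,t_{0}]$'', therefore collapses precisely for the directions that distinguish the hypoelliptic case from the elliptic one. (Relatedly, $g$ is not by itself the solution of a closed linear RDE: only the full vector $\tilde g_{K}(t)=\sum_{I}v_{I}\beta^{I,\epsilon}_{K}(t,x)$, $K\in\mathcal{A}_{1}(l)$, satisfies \eqref{beta}, and $g$ is its projection onto the first $d$ coordinates.)

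What \cite{BOZ} actually supplies is a Norris-lemma/iteration argument adapted to the fBm setting: smallness of the projection $(\tilde g_{(i)})_{i=1}^{d}$ in $\mathcal{H}$-norm forces, with high probability and quantitatively, smallness of the higher-order components $\tilde g_{K}$ (via the structure of the linear system \eqref{beta}), and one propagates this through word lengths until reaching $|\tilde g(0)|=|v|=1$. Your Step~1 (uniform-in-$\epsilon$ moment bounds for the $\beta$-system, via the word-length bookkeeping on $\omega^{K,\epsilon}_{I\ast i}$ and \cite{CLL}) is correct and is part of the machinery, but it is not enough on its own; the missing piece is exactly this inductive transfer of smallness from length-$1$ to longer words.
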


Finally, we are able to prove (2) of Lemma \ref{th: Malliavin est subelliptic}.

\begin{lemma}\label{upperbound inverse_UH}
Fix $\epsilon \in (0,1]$ and let $\gamma_{X^{\epsilon}_{1}}$ be the Malliavin matrix of $X^{\epsilon}_{1}$, then $\gamma_{X^{\epsilon}_{1}}$ is invertible and 
there exists a random variable $C\in \mathbb{L}^{p}$ for $p\ge 2$ such that
\[
\lambda_{\max}(\gamma^{-1}_{X^{\epsilon}_{1}})\leq \frac{C}{\epsilon^{2l}} \quad a.s.
\]
\end{lemma}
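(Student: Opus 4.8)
The plan is to reduce the required lower bound on $\lambda_{\min}(\gamma_{X_1^\eps})$ to three facts already at our disposal: the representation of $(\bj^\eps)^{-1}V^\eps_{[I]}(X^\eps)$ in Lemma \ref{jacobian_inverse_beta}, the uniform non-degeneracy of the rescaled matrix $M^\eps$ in Proposition \ref{M inverse}, and the uniform hypoellipticity of Hypothesis \ref{UH condition}. Fix $\eps\in(0,1]$ and $v\in\mr^n$, and set $w=(\bj^\eps_1)^*v$. Using Proposition \ref{prop:deriv-sde} written for the rescaled system with fields $V^\eps_i=\eps V_i=V^\eps_{[i]}$, one has $\bd^i_tX_1^\eps=\bj^\eps_1(\bj^\eps_t)^{-1}V^\eps_{[i]}(X^\eps_t)$, hence
\[
v^*\gamma_{X_1^\eps}v=\sum_{i=1}^d\big\langle v^*\bd^i_\cdot X_1^\eps,\,v^*\bd^i_\cdot X_1^\eps\big\rangle_\ch,\qquad v^*\bd^i_tX_1^\eps=w^*(\bj^\eps_t)^{-1}V^\eps_{[i]}(X^\eps_t).
\]
Applying Lemma \ref{jacobian_inverse_beta} with $I=(i)$ gives $v^*\bd^i_tX_1^\eps=\sum_{J\in\mathcal{A}_1(l)}c_J\,\beta^{J,\eps}_{(i)}(t,x)$ with $c_J:=\langle w,V^\eps_{[J]}(x)\rangle=\eps^{|J|}\langle w,V_{[J]}(x)\rangle$. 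Each path $\beta^{J,\eps}_{(i)}(\cdot,x)$ has finite $p$-variation for $p>1/H$ and therefore lies in $\ch$ (recall $\cac^\gamma\subset\ch$ for $\gamma>\tfrac12-H$, which is available since $H>\tfrac14$); expanding the $\ch$-inner products by bilinearity and comparing with the definition of $M^\eps(x)=(M^\eps_{I,J}(1,x))_{I,J}$ from Proposition \ref{M inverse} yields the identity
\[
v^*\gamma_{X_1^\eps}v=\sum_{I,J\in\mathcal{A}_1(l)}c_I\,c_J\,M^\eps_{I,J}(1,x)=c^*M^\eps(x)\,c.
\]

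Next I would extract the power of $\eps$. Since $M^\eps(x)$ is symmetric positive, $c^*M^\eps(x)c\ge\lambda_{\min}(M^\eps(x))\,|c|^2$, and because $|J|\le l$ and $\eps\le1$,
\[
|c|^2=\sum_{J\in\mathcal{A}_1(l)}\eps^{2|J|}\langle w,V_{[J]}(x)\rangle^2\ge\eps^{2l}\sum_{J\in\mathcal{A}_1(l)}\langle w,V_{[J]}(x)\rangle^2\ge\lambda\,\eps^{2l}\,|w|^2,
\]
the last inequality being Hypothesis \ref{UH condition} applied to $u=w$. Since $|w|^2=|(\bj^\eps_1)^*v|^2\ge\|(\bj^\eps_1)^{-1}\|^{-2}|v|^2$, combining the three displays gives $\lambda_{\min}(\gamma_{X_1^\eps})\ge\lambda\,\eps^{2l}\,\lambda_{\min}(M^\eps(x))\,\|(\bj^\eps_1)^{-1}\|^{-2}$, that is $\lambda_{\max}(\gamma^{-1}_{X_1^\eps})\le C\eps^{-2l}$ with $C:=\lambda^{-1}\|(\bj^\eps_1)^{-1}\|^2\,\lambda_{\max}\big((M^\eps(x))^{-1}\big)$.

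Finally I would check $C\in L^p$ for every $p\ge1$: the factor $\lambda_{\max}((M^\eps(x))^{-1})=\|(M^\eps(x))^{-1}\|$ has moments of all orders uniformly in $\eps\in(0,1]$ and $x$ by Proposition \ref{M inverse}, while $\|(\bj^\eps_1)^{-1}\|^2$ has moments of all orders uniformly in $\eps\in[0,1]$ because $(\bj^\eps)^{-1}$ solves a linear rough differential equation of the same type as the Jacobian, so Proposition \ref{prop:moments-jacobian} applies; H\"older's inequality then bounds $\|C\|_{L^p}$ uniformly in $\eps$, which is exactly the input needed for the quantitative statement $\|\gamma^{-1}_{X_1^\eps}\|_r\le c_r\eps^{-2l}$ of Lemma \ref{th: Malliavin est subelliptic}(2). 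The step requiring the most care is the reduction in the first paragraph: one must justify that $v^*\bd^i_\cdot X_1^\eps$ and each $\beta^{J,\eps}_{(i)}(\cdot,x)$ genuinely lie in $\ch$, so that the bilinear expansion producing $M^\eps(x)$ is legitimate — in the rough regime $H<\tfrac12$ this rests on the embedding $\cac^\gamma\subset\ch$ together with the $p$-variation regularity of $\beta$ supplied by Lemma \ref{jacobian_inverse_beta} — and that the power counting in the second paragraph produces exactly the exponent $\eps^{2l}$, which is precisely the place where the rescaled bracket structure $V^\eps_{[J]}=\eps^{|J|}V_{[J]}$ is used essentially.
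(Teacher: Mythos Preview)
Your proof is correct and follows essentially the same route as the paper: both express $v^*\gamma_{X_1^\eps}v$ as the quadratic form $c^*M^\eps(x)c$ with $c_J=\langle(\bj_1^\eps)^*v,V^\eps_{[J]}(x)\rangle$ via Lemma \ref{jacobian_inverse_beta}, then chain the lower bounds $\lambda_{\min}(M^\eps)\cdot\eps^{2l}\cdot\lambda\cdot\lambda_{\min}^2(\bj_1^\eps)$ using Proposition \ref{M inverse}, Hypothesis \ref{UH condition}, and the integrability of $(\bj_1^\eps)^{-1}$. Your added remark that the $\beta^{J,\eps}_{(i)}(\cdot,x)$ genuinely lie in $\ch$ (so that the bilinear expansion is legitimate) is a welcome clarification the paper leaves implicit.
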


\begin{remark}
It follows from the above lemma that for any $r\geq 1$
$$\|\gamma_{X_1^\eps}^{-1}\|_r\leq c_r\eps^{-2l},\quad\quad \eps\in(0,1),$$
for some constant $c_r$ depending on $r$.
\end{remark}

\begin{proof}
First note that:
\begin{align*}
 D^{i}_{t}X^{\epsilon}_{1}=& \bj_{1}^{\eps}(\bj_{t}^\eps)^{-1}V^{\epsilon}_{i}(X^{\epsilon})\\
			   =&\sum_{J\in \mathcal{A}_{1}(l)}\beta^{J,\epsilon}_{i}(t,x)\bj_{1}^{\eps}V^{\epsilon}_{[J]}(x) 
\end{align*}
and 
\[
\gamma^{i,j}_{X^{\epsilon}_{1}}=\langle D_{t}X^{\epsilon, i}_{1}, D_{t}X^{\epsilon, j}_{1}\rangle_{\mathcal{H}}.
\]
Hence we have the following expression for $\gamma_{X^{\epsilon}_{1}}$:
\[
\gamma_{X^{\epsilon}_{1}}=\sum_{I,J\in \mathcal{A}_{1}(l)}\langle \beta^{I,\epsilon}(\cdot, x), 
\beta^{J,\epsilon}(\cdot, x)\rangle_{\mathcal{H}}\bj_{1}^{\eps}V^{\epsilon}_{[I]}(x)V^{\epsilon}_{[J]}(x)^{\ast}(\bj_{1}^{\eps})^{\ast}
\]
Now pick $u\in \mathbb{R}^{n}$, we have:
\begin{align*}
u^{\ast}\gamma_{X^{\epsilon}_{1}}u=&\sum_{I,J\in \mathcal{A}_{1}(l)}\langle \beta^{I,\epsilon}(\cdot, x), 
\beta^{J,\epsilon}(\cdot, x)\rangle_{\mathcal{H}}\langle V^{\epsilon}_{[I]}(x), (\bj_{1}^{\eps})^{\ast}u\rangle_{\mathbb{R}^{n}}
\langle V^{\epsilon}_{[J]}(x), (\bj_{1}^{\eps})^{\ast}u\rangle_{\mathbb{R}^{n}}\\
                                  =&\sum_{I,J\in \mathcal{A}_{1}(l)}M^{\epsilon}_{I,J}(x)
                                  \langle V^{\epsilon}_{[I]}(x), (\bj_{1}^{\eps})^{\ast}u\rangle_{\mathbb{R}^{n}}
                                  \langle V^{\epsilon}_{[J]}(x), (\bj_{1}^{\eps})^{\ast}u\rangle_{\mathbb{R}^{n}}\\
                                  \ge&\lambda_{\min}(M^{\epsilon}_{I,J}(x))\sum_{I\in \mathcal{A}_{1}(l)}
                                  \langle V^{\epsilon}_{[I]}(x), (\bj_{1}^{\eps})^{\ast}u\rangle^{2}_{\mathbb{R}^{n}}\\
                                  =&\lambda_{\min}(M^{\epsilon}_{I,J}(x))\sum_{I\in \mathcal{A}_{1}(l)}\epsilon^{2|I|}
                                  \langle V_{[I]}(x), (\bj_{1}^{\eps})^{\ast}u\rangle^{2}_{\mathbb{R}^{n}}\\
                                  \ge&\epsilon^{2l}\lambda_{\min}(M^{\epsilon}_{I,J}(x))\lambda\|(\bj_{1}^{\eps})^{\ast}u\|^{2}\\
                                  \ge&\lambda\epsilon^{2l}\lambda_{\min}(M^{\epsilon}_{I,J}(x))\lambda^{2}_{\min}(\bj_{1}^{\eps})\|u\|^{2}
\end{align*}
Now by choosing $u$ the eigenvector that corresponds to $\lambda_{\min}(\gamma_{X^{\epsilon}_{1}})$, we obtain:
\[
\lambda_{\min}(\gamma_{X^{\epsilon}_{1}})\ge\lambda\epsilon^{2l}\lambda_{\min}(M^{\epsilon}_{I,J}(x))\lambda^{2}_{\min}(\bj_{1}^{\eps})
\]
Now by the uniform integrability of $(\bj_{1}^{\eps})^{-1}$ in $\epsilon \in (0,1]$, and proposition(\ref{M inverse}), we obtain:
\[
\lambda_{\max}(\gamma^{-1}_{X^{\epsilon}_{1}})\le \lambda^{-1}\lambda_{\max}((M^{\epsilon}_{I,J}(X))^{-1})\lambda^{2}_{\max}((\bj_{1}^{\eps})^{-1})\epsilon^{-2l}
\]
and it completes the proof.
\end{proof}

\bigskip
\subsection{Proof of Theorem \ref{th: main result}}

Now we are in position to prove the main result of this paper.
\bigskip

\noindent {\bf Proof of (\ref{main claim 1})} Fix $y\in\mr^n$. We only need to show for  $d^2_R(y)<\infty$, since if $d^2_R(y)=\infty$ the statement is trivial. Fix any $\eta>0$ and let $h\in\msh$ be such that $\Phi_1(h)=y, \det_{\gamma_{\Phi}}(h)>0$, and $\|h\|^2_{\msh}\leq d^2_R(y)+\eta$. Let $f\in C_0^\infty(\mr^n).$ By Cameron-Martin theorem for fractional Brownian motions, we have
$$\me f(X^\eps_1)=e^{-\frac{\|h\|_\msh^2}{2\eps^2}}\me f(\Phi_1(\eps B+h))e^\frac{B(h)}{\eps}.$$
Consider a function $\chi\in C^\infty(\mr), 0\leq \chi\leq 1,$ such that $\chi(t)=0$ if $t\not\in[-2\eta, 2\eta]$, and $\chi(t)=1$ if $t\in[-\eta,\eta]$. Then, if $f\geq 0$, we have
$$\me f(X^\eps_1)\geq e^{-\frac{\|h\|_\msh+4\eta}{2\eps^2}}\me \chi(\eps B(h))f(\Phi_1(\eps B+h)).$$
Hence, we obtain
\begin{align}\label{lower bound claim1}\eps^2\log p_\eps(y)\geq -(\frac{1}{2}\|h\|_\msh^2+2\eta)+\eps^2\log\me\big(\chi(\eps B(h))\delta_y(\Phi_1(\eps B+h))\big).\end{align}

On the other hand, we have
$$\me\big(\chi(\eps B(h))\delta_y(\Phi_1(\eps B+h))\big)=\eps^{-n}\me\left(\chi(\eps B(h))\delta_0\left(\frac{\Phi_1(\eps B+h)-\Phi_1(h)}{\eps}\right)\right).$$
Note that
$$Z_1(h)=\lim_{\eps\downarrow 0}\frac{\Phi_1(\eps B+h)-\Phi_1(h)}{\eps}$$
is a $n$-dimensional random vector in the first Wiener chaos with variance $\gamma_{\Phi_1}(h)>0$. Hence $Z_1(h)$ is non-degenerate and by Proposition \ref{th: smoothness in eps}, we obtain
$$\lim_{\eps\downarrow0}\me\left(\chi(\eps B(h))\delta_0\left(\frac{\Phi_1(\eps B+h)-\Phi_1(h)}{\eps}\right)\right)=\me\delta_0(Z_1(h)).$$
Therefore,
$$\lim_{\eps\downarrow0}\eps^2\log\me\big(\chi(\eps B(h))\delta_y(\Phi_1(\eps B+h))\big)=0.$$
Letting $\eps\downarrow0$ in (\ref{lower bound claim1}) we obtain
$$\liminf_{\eps\downarrow0}\eps^2\log p_{\eps}(y)\geq-(\frac{1}{2}\|h\|^2_\msh+2\eta)\geq -(d^2_R(y)+3\eta).$$
Since $\eta>0$ is arbitrary, this completes the proof. \hfill$\Box$

As a direct consequence of the above proof, we have the following lower bound for the density function for small $\eps\in(0,1)$.

\begin{corollary}
Assume the same conditions as Theorem \ref{th: main result}. Denote by $p_\eps(y)$ the density of $X_1^\eps$. Then for all small $\eps\in (0,1)$ we have
$$p_\eps(y)\geq \frac{C}{\eps^n}e^{-\frac{d^2_R(y)}{2\eps^2}},$$for  some constant $C>0.$
\end{corollary}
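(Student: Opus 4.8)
The plan is to reread the proof of (\ref{main claim 1}) quantitatively: there we extracted only a $\liminf$, but the computation actually produces, for each fixed Cameron--Martin element, an explicit lower bound valid for all small $\eps$, and the extra input needed is merely that the relevant $\delta_0$-expectation has a strictly positive limit rather than only a nonnegative one. Concretely, fix $y$ with $d^2_R(y)<\infty$ (otherwise the right-hand side vanishes and there is nothing to prove), fix $\eta>0$, and choose $h\in\msh$ with $\Phi_1(h)=y$, $\det\gamma_{\Phi_1(h)}>0$ and $\tfrac12\|h\|^2_\msh\le d^2_R(y)+\eta$. Applying the Cameron--Martin formula for fBm exactly as in the proof of (\ref{main claim 1}), namely
\[
\me f(X^\eps_1)=e^{-\|h\|^2_\msh/(2\eps^2)}\,\me\!\left[f\big(\Phi_1(\eps B+h)\big)\,e^{B(h)/\eps}\right]\quad(f\ge0),
\]
inserting the cutoff $\chi(\eps B(h))$ and using $e^{B(h)/\eps}\ge e^{-2\eta/\eps^2}$ on its support, then taking $f=\delta_y$ and rescaling via $\delta_y(z)=\eps^{-n}\delta_0\big((z-y)/\eps\big)$, one obtains
\[
p_\eps(y)\ \ge\ \eps^{-n}\,e^{-(\frac12\|h\|^2_\msh+2\eta)/\eps^2}\,
\me\!\left[\chi(\eps B(h))\,\delta_0\!\Big(\tfrac{\Phi_1(\eps B+h)-\Phi_1(h)}{\eps}\Big)\right].
\]

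The decisive point, already contained in the proof of (\ref{main claim 1}), is that the last expectation converges to a strictly positive constant as $\eps\downarrow0$. By Proposition \ref{th: smoothness in eps}, $\tfrac1\eps\big(\Phi_1(\eps B+h)-\Phi_1(h)\big)\to Z_1(h)$ in $\md^\infty$, where $Z_1(h)$ is a first-chaos Gaussian vector with covariance $\gamma_{\Phi_1(h)}>0$, hence nondegenerate; together with $\chi(\eps B(h))\to\chi(0)=1$ in $\md^\infty$ and the continuity of the Watanabe pullback by $\delta_0$ along uniformly nondegenerate families, this gives
\[
\lim_{\eps\downarrow0}\me\!\left[\chi(\eps B(h))\,\delta_0\!\Big(\tfrac{\Phi_1(\eps B+h)-\Phi_1(h)}{\eps}\Big)\right]=\me\big[\delta_0(Z_1(h))\big]=p_{Z_1(h)}(0)>0.
\]
Hence there is $\eps_0=\eps_0(h,\eta)>0$ such that the expectation is $\ge\tfrac12 p_{Z_1(h)}(0)$ on $(0,\eps_0)$, and the two displays combine to $p_\eps(y)\ge C\,\eps^{-n}\,e^{-(\frac12\|h\|^2_\msh+2\eta)/\eps^2}$ there, with $C=\tfrac12 p_{Z_1(h)}(0)$. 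Since $\tfrac12\|h\|^2_\msh\le d^2_R(y)+\eta$ and $\eta$ is arbitrary, this yields the asserted bound; if one wishes to remove the $O(\eta)$ surplus in the exponent entirely, one replaces the smooth cutoff by the one-sided bound $e^{B(h)/\eps}\ge\mathbf{1}_{\{B(h)\ge0\}}$, which costs no exponential factor and still leaves the limiting pairing $\me[\mathbf{1}_{\{B(h)\ge0\}}\delta_0(Z_1(h))]>0$ by nondegeneracy of $Z_1(h)$.

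I do not expect a genuine obstacle here: every analytic ingredient --- the Cameron--Martin formula for $B$, the identity $X^\eps_1=\Phi_1(\eps\mathbf B)$, the $\md^\infty$-convergence to the nondegenerate limit $Z_1(h)$, and the continuity of the associated generalized Wiener functionals --- is already in force from the proof of Theorem \ref{th: main result}. The only points requiring a line of care are the bookkeeping of $\eta$ in the exponent, handled as above, and, if $C$ is to depend on $y$ alone, fixing one near-minimizing $h=h(y)$ and the corresponding threshold $\eps_0(y)$ once and for all.
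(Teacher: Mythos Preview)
Your approach is exactly what the paper does: it states the corollary as ``a direct consequence of the above proof'' of \eqref{main claim 1} and provides no further argument, and you have correctly unpacked that consequence by observing that the limiting expectation $\me[\delta_0(Z_1(h))]=p_{Z_1(h)}(0)$ is strictly positive, so the bound is nonasymptotic for small $\eps$. One remark on bookkeeping: as written, the corollary's exponent $-d^2_R(y)/(2\eps^2)$ appears to be a misprint for $-d^2_R(y)/\eps^2$ (recall $d^2_R(y)=\inf\tfrac12\|h\|_\msh^2$), and your final sentence ``since $\eta$ is arbitrary, this yields the asserted bound'' is too quick since $C$ and $\eps_0$ depend on $(h,\eta)$; the honest statement one extracts is $p_\eps(y)\ge C_\eta\,\eps^{-n}e^{-(d^2_R(y)+\eta)/\eps^2}$ for each $\eta>0$, which is all the paper's informal corollary can really mean.
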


\bigskip

\noindent{\bf Proof of (\ref{main claim 2})}. Fix a point $y\in\mr^n$ and consider a function $\chi\in C_0^\infty(\mr^n), 0\leq\chi\leq1$ such that $\chi$ is equal to one in a neighborhood of $y$. The density of $X_1^\eps$ at point $y$ is given by
$$p_\eps(y)=\me\chi(X_1^\eps)\delta_y(X_1^\eps).$$
By Proposition \ref{th: IBP}, we can write
\begin{align*}
\me\chi(X_1^\eps)\delta_y(X_1^\eps)=&\me\left(\mathbf{1}_{\{X_1^\eps>y\}}H_{(1,2,...,n)}(X_1^\eps,\chi(X_1^\eps))\right)\\
\leq&\me|H_{(1,2,...,n)}(X_1^\eps,\chi(X_1^\eps))|\\
=&\me\big(|H_{(1,2,...,n)}(X_1^\eps,\chi(X_1^\eps))|\mathbf{1}_{\{X_1^\eps\in \mathrm{supp}\chi\}}\big)\\
\leq&\mp(X_1^\eps\in\mathrm{supp}\chi)^\frac{1}{q}\|H_{(1,..,n)}(X_1^\eps,\chi(X_1^\eps))\|_p,
\end{align*}
where$\frac{1}{p}+\frac{1}{q}=1$. By Remark \ref{est H} we know that
$$\|H_{(1,...,n)}(X_1^\eps,\chi(X_1^\eps))\|_p\leq C_{p,q}\|\gamma_{X_1^\eps}^{-1}\|_\beta^m\|\bd X_1^\eps\|_{k,\gamma}^r\|\chi(X_1^\eps)\|_{k,q},$$
for some constants $\beta, \gamma>0$ and integers $k,m, r$. Thus, by Lemma \ref{th: Malliavin est} we have
$$\lim_{\eps\downarrow0}\eps^2\log\|H_{(1,...,n)}(X_1^\eps,\chi(X_1^\eps))\|_p=0.$$

Finally by Theorem \ref{th: LDP}, the large deviation principle for $X_1^\eps$ ensures that for small $\eps$ we have
$$\mp(X_1^\eps\in\mathrm{supp}\chi)^\frac{1}{q}\leq e^{-\frac{1}{q\eps^2}(\inf_{y\in\mathrm{supp}\chi}d^2(y))}$$
which concludes the proof.  \hfill$\Box$

\bigskip

\noindent{\bf Proof of (\ref{main claim 3})}.  Fix a point $y\in\mr^n$ and suppose that
$$\gamma:=\inf_{\Phi(h)=y, \det\gamma_{\Phi}(h)>0}\det\gamma_{\Phi}(h)>0.$$
Let  $\chi\in C_0^\infty(\mr^n), 0\leq\chi\leq1$ be a function such that $\chi$ is equal to one in a neighborhood of $y$,  and $g\in C^\infty(\mr), 0\leq g\leq1$, such that $g(u)=1$ if $|u|<\frac{1}{4}\gamma$, and $g(u)=0$ if $|u|>\frac{1}{2}\gamma$. Set $G_\eps=g(\det\gamma_{{X_1^\eps}})$. As before, we have
$$\me\chi(X_1^\eps)\delta_y(X_1^\eps)=\me G_\eps\chi(X_1^\eps)\delta_y(X_1^\eps)+\me(1-G_\eps)\chi(X_1^\eps)\delta_y(X_1^\eps)=I_1+I_2.$$
In what follows, we estimate $I_1$ and $I_2$ respectively.

\ \\
\noindent{ \it Estimate of $I_1$}: Let $\{\phi_n, n\geq 1\}$ be an orthonormal basis for $\mathcal{H}$. Let
$$B^N=\sum_{i=1}^N B(\phi_n)\phi_n$$
be the Karhunen-Loeve type approximation of $B$. Denote by $\mathbf{B}^N$ the lift of $B^N$ to $G^{\lfloor p\rfloor}(\mr^d)$. It has been shown (cf. \cite{FV-bk}) that $\mathbf{B}^N$ converges to $\mathbf{B}$ in the rough path topology in $L^r(\mp)$ for any $r\geq 1$. Then by the continuity of the It\^{o}'s map $\Phi$, we see immediately that $X_1^{\eps,N}=\Phi(\eps\mathbf{B}^N)$ converges to $X_1^{\eps}=\Phi(\eps\mathbf{B})$, as $N$ approaches to infinity,  in $L^r(\mp)$ for any $r\geq 1$.  Moreover, one can show that this convergence indeed takes place in $\md^\infty$, which can be seen by a similar (but simpler) argument to the proof of Lemma \ref{th: smoothness in eps}.

Now we can claim that $\me G_\eps\chi(X_1^\eps)\delta_y(X_1^\eps)=0$. Because, otherwise, based on the above Karhunen-Loeve type approximation and by some standard argument, one can find an element $\eps h\in\mathcal{H}$ such that $\Phi(\eps h)=y$ and $0<\det\gamma_\Phi(\eps h)<\frac{\gamma}{2},$ and this is in contradiction with the definition of $\gamma$ (see \cite[Proposition 4.2.1]{Nu-flour} for more details).

\ \\
\noindent{\it Estimate of $I_2$}: Proceding as in the proof of (\ref{main claim 2}) we obtain
\begin{align*}
\me(1-G_\eps)\chi(X_1^\eps)\delta_y(X_1^\eps)=&\me(\mathbf{1}_{\{X_1^\eps>y\}}H_{(1,...,n)}(X_1^\eps,(1-G_\eps)\chi(X_1^\eps)))\\
\leq&\me|H_{(1,...,n)}(X_1^\eps,(1-G_\eps)\chi(X_1^\eps))|\\
\leq&\me\big(|H_{(1,...,n)}(X_1^\eps\chi(X_1^\eps))|\mathbf{1}_{\{X_1^\eps\in\mathrm{supp}\chi, \det\gamma_{X_1^\eps}\geq\frac{1}{4}\gamma\}}\big)\\
\leq&\mp\left(X_1^\eps\in\mathrm{supp}\chi, \det\gamma_{X_1^\eps}\geq\frac{1}{4}\gamma\right)^{\frac{1}{q}}\|H_{(1,...,n)}(X_1^\eps,\chi(X_1^\eps))\|_p.
\end{align*}

Finally, by Lemma \ref{th: Malliavin est} and large deviation principle stated in Theorem \ref{th: LDP} for the couple $(X_1^\eps, \gamma_{X_1^\eps})$, we have for any $q>1$
\begin{align*}
\limsup_{\eps\downarrow0}\eps^2\log p_\eps(y)\leq&-\frac{1}{2q}\inf_{\Phi(h)\in\mathrm{supp}\chi, \det\gamma_{\Phi}(h)\geq\frac{1}{4}\gamma}\|h\|_\mathcal{H}^2\\
\leq&-\frac{1}{2q}\inf_{y\in\mathrm{supp}\chi}d^2_R(y).
\end{align*}
The proof is completed.\hfill$\Box$

\end{document}